\theoremstyle{plain}
\newtheorem{prp}{Proposition}[section]
\newtheorem{lemma}[prp]{Lemma}
\newtheorem{stat}[prp]{Claim}
\newtheorem{corollary}[prp]{Corollary}
\newtheorem{theorem}[prp]{Theorem}
\theoremstyle{definition}
\newtheorem{prob}[prp]{Problem}
\newcommand{\cg}{\mathcal{CG}}
\newcommand\GamL{{\sf \Gamma L}}
 \newcommand\soc{{\sf Soc}}
\newcommand\aut{{\sf Aut}} 
\newcommand\out{{\sf Out}} 
\newcommand\sym{{\sf Sym}}
\newcommand\alt{{\sf Alt}}
 \def\AGmL{\operatorname{A\Gamma L}}
\renewcommand{\leq}{\leqslant}
\newcommand{\tG}{\widetilde{G}}
\newcommand{\fix}{\mathrm{Fix}}
\newcommand{\fixs}{\mathrm{Fix}_{\Sigma}}
\newcommand{\cA}{\mathrm{A}}
\newcommand{\cV}{\mathrm{V}}
\newcommand{\cF}{\mathcal{F}}
\newcommand{\syl}{\mathrm{Syl}}
\newcommand{\rk}{\mathrm{rk}}
\begin{document}

 \title[On vertex-transitive distance-regular covers of complete graphs...]
{On vertex-transitive distance-regular covers of complete graphs with an extremal smallest eigenvalue}

\author[L.~Yu.~Tsiovkina]{L.Yu. Tsiovkina}
\address{N.N.~Krasovskii Institute of Mathematics and Mechanics of UB RAS, 16 S.~Kovalevskaya str., Yekaterinburg, 620990 Russia,\\
Ural Federal University, 19 Mira Str., Yekaterinburg, 620002 Russia}
 \email{l.tsiovkina@gmail.com}
 
\begin{abstract} 
The paper is devoted to the study of abelian (in the sense defined by Godsil and Hensel) distance-regular $r$-covers of the complete graphs $K_n$. According to the construction by Coutinho, Godsil, Shirazi, and Zhan (2016), each such cover yields an equiangular set of lines of size $n$ that attains the relative bound. Moreover, there are four families of abelian covers that, through this construction, yield sets of lines attaining the absolute bound. All known representatives of these families --- the hexagon, the icosahedron graph, Taylor extensions of the Schl\"afli and McLaughlin graphs together with their distance-$2$ graphs, and three other examples arising from generalized quadrangles --- have a vertex-transitive automorphism group with at most two orbits on the arc set of the cover.
We aim to classify the covers from these families under the condition that the automorphism group of the cover is vertex-transitive and has at most two orbits on its arc set, which holds precisely when this group induces a transitive permutation group of rank at most $3$ on the set of cover fibres.
We apply several fundamental classification results on permutation groups of rank at most $3$ to describe the family of covers for which $r$ is odd and the smallest eigenvalue is extremal, equaling $-\sqrt{\sqrt{n}+1}$, which corresponds to lines in a complex Hilbert space. The results obtained encompass cases where the automorphism group of the cover induces a primitive or imprimitive group of rank at most $3$ on the set of fibres.
 
\end{abstract}

\maketitle
\vspace{2mm}

 \hspace{-17pt}{\it Keywords:}  distance-regular graph, antipodal cover, vertex-transitive graph, rank 3 group,   equiangular set of lines, equiangular tight frame.

 \hspace{-17pt}{\it 2020 MSC:} 05E18; 05E30

\section{Introduction} \label{Introduction}

 The paper is devoted to the study of
distance-regular antipodal covers of complete graphs. In view of \cite[Lemma 3.1]{GodsilHensel}, each such cover is equivalently defined as a connected graph whose vertex set admits a partition into a set of $n$ blocks of the same size $r\ge 2$ such that each block induces an $r$-coclique, the union of any two distinct blocks induces a perfect matching, and each two non-adjacent vertices lying in different blocks have exactly $\mu\ge 1$ common neighbours. The triple $(n,r,\mu)$ is called the cover parameters. Every cover with such parameters is briefly referred to as an $(n,r,\mu)$-cover. For an $(n,r,\mu)$-cover $\Gamma$, its blocks are called {\it fibres} and the set of all fibres is denoted by $\cF(\Gamma)$.
The group of all automorphisms of an $(n,r,\mu)$-cover $\Gamma$ that fix each of its fibres as a set is called the {\it covering group} of $\Gamma$ and is denoted by ${\mathcal{CG}}(\Gamma)$. If ${\mathcal{CG}}(\Gamma)$ is abelian and acts regularly on each fibre of $\Gamma$, then  $\Gamma$ is said to be  \emph{abelian} (see \cite{GodsilHensel}).

In this paper, we focus on  abelian $(n,r,\mu)$-covers.
The interest in this class of covers is due, among other things, to the fact that
they are one of the main known potential sources of new infinite families of non-trivial (i.e., distinct from orthonormal bases and regular simplices) equiangular tight frames (see the survey and references in \cite {FJMPW}).

Coutinho, Godsil, Shirazi, and Zhan \cite{CGSZ} established some significant connections between abelian $(n,r,\mu)$-covers and  equiangular lines in certain spaces. Specifically, using an old result due to Godsil and Hensel \cite{GodsilHensel}, they shown  that from every abelian cover one can derive a set of $n$ complex equiangular lines in spaces of dimensions $n - \frac{m_{\theta}}{r-1}$ and $n - \frac{m_{\tau}}{r-1}$, where $m_{\theta}$ and $m_{\tau}$ are the multiplicities of the second largest and the smallest eigenvalues $\theta$ and $\tau$ of the cover, respectively. The resulting sets of lines meet the equality in the relative bound, thus providing equiangular tight frames of size $n$ in the corresponding spaces.
The absolute bound for the number of equiangular lines imposes the following restrictions:
$$-\frac{1}{2}\sqrt{(n-1)(\sqrt{8n+1}-3)} \leq \tau \leq -\sqrt{\frac{1}{2}(\sqrt{8n+1}+3)}$$ for covers with even $r$, and
$$-(\sqrt{n}-1)\sqrt{\sqrt{n}+1} \leq \tau \leq -\sqrt{\sqrt{n}+1}$$ for covers with odd $r$.
The extreme cases in the upper or lower bounds imply the following parametrization of abelian covers, which, via this construction, yield sets of equiangular lines of maximum possible size in real or complex Hilbert spaces.

\begin{theorem}[{see \cite{CGSZ}}]\label{CGSZthm1}
Let $\Gamma$ be an abelian $(n,r,\mu)$-cover with eigenvalues $n-1>\theta>-1>\tau$.
\begin{itemize}
\item[$(A)$]
If $r$ is even, then $\Gamma$ gives a set of equiangular lines of size $n=d(d+1)/2$ in $\mathbb{R}^d$ via the Coutinho--Godsil--Shirazi--Zhan construction if and only if
either $(n,r,\mu)=(28,4,8)$ or   $r\ge 4$, $(i)$ $t=-\tau\in \mathbb{N}$, $(ii)$ $t\ge 3$ and is not divisible by four, $(iii)$ $\mu\ge 2$,
$(iv)$ $r$ divides $t-1$ whenever $2r\le t^2+1$, $(v)$  $\mu$ is even whenever $t$ is odd, $(vi)$ any odd prime divisor of $r$ is a divisor of $t-1$, and
\begin{equation} \label{params*reven1}(n,r,\mu)=((t^2-2)(t^2-1)/2,r,(t-1)^3(t+2)/(2r)),
\end{equation}
 or $r=2$,  $t=-\tau\in \{\sqrt{5}\}\cup \mathbb{N}$ and \begin{equation} \label{params*reven2}(n,r,\mu)\in\{((t^2-2)(t^2-1)/2,r,(t\pm1)^3(t\mp 2)/(2r))\}.\end{equation}

\item[$(B)$] If $r$ is odd, then $\Gamma$ gives a set of equiangular lines of size $n=d^2$ in $\mathbb{C}^d$ via the Coutinho--Godsil--Shirazi--Zhan construction if and only if
either $r=\mu=3=\sqrt{n}$ or $t=-\tau\in \mathbb{N}$, $(3,r)=1$, $r$ divides $t-1$, and
\begin{equation}\label{params*}
(n,r,\mu)=((t^2-1)^2,r,(t-1)^2(t^2+t-1)/r).
\end{equation}
\end{itemize}
\end{theorem}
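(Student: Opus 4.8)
The plan is to make the Coutinho--Godsil--Shirazi--Zhan correspondence fully explicit and then impose maximality of the resulting line systems. From an abelian $(n,r,\mu)$-cover $\Gamma$ one obtains $n$ equiangular lines in dimension $d_\tau:=n-\tfrac{m_\tau}{r-1}$ (built from the eigenspace of the smallest eigenvalue) and $n$ lines in dimension $d_\theta:=n-\tfrac{m_\theta}{r-1}$ (from the eigenspace of $\theta$), both systems attaining the relative bound. By Lemma~3.1 of \cite{GodsilHensel}, $\Gamma$ is distance-regular with intersection array $\{n-1,(r-1)\mu,1;1,\mu,n-1\}$; the intersection numbers give $\theta+\tau=n-2-r\mu$ and $\theta\tau=-(n-1)$, and, since the vectors constant on fibres realise the quotient $K_n$, the eigenvalue $-1$ has multiplicity $n-1$, so $m_\theta+m_\tau=(r-1)n$, while $\operatorname{tr}A=0$ forces $m_\theta\theta+m_\tau\tau=0$; solving, $\tfrac{m_\theta}{r-1}=\tfrac{nt}{\theta+t}$ and $\tfrac{m_\tau}{r-1}=\tfrac{n\theta}{\theta+t}$ with $t:=-\tau$. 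Substituting $\theta=(n-1)/t$ then gives
\[
d_\tau=\frac{nt^2}{\,n-1+t^2\,},\qquad d_\theta=\frac{n(n-1)}{\,n-1+t^2\,}.
\]

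Next I would use that $\Gamma$ yields a set of equiangular lines of maximum size precisely when one of these two systems meets the absolute bound --- that is $n=d_\tau^2$ or $n=d_\theta^2$ for $r$ odd (complex lines), and $n=\binom{d_\tau+1}{2}$ or $n=\binom{d_\theta+1}{2}$ for $r$ even (real lines, via the real form of the construction available when $r$ is even) --- together with the observation that these four equalities are exactly the equality cases of the two $\tau$-inequalities recorded in the excerpt, the upper endpoint being attained iff the $d_\tau$-system is maximal and the lower one iff the $d_\theta$-system is. Each equality then becomes a one-variable identity in $n$ and $t$: $n=d_\tau^2$ forces $(d_\tau-1)(d_\tau+1)=t^2(d_\tau-1)$, hence $d_\tau=t^2-1$ and $n=(t^2-1)^2$; $n=\binom{d_\tau+1}{2}$ forces $(d_\tau-1)(d_\tau+2)=t^2(d_\tau-1)$, hence $d_\tau=t^2-2$ and $n=(t^2-1)(t^2-2)/2$; and the two $d_\theta$-equalities give $t^2$ as a polynomial in $d_\theta$ (for instance $t^2=(d_\theta-1)^2(d_\theta+1)$ in the complex case), forcing $d_\theta$ to make that expression a square. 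In each case $\mu$ is recovered from $r\mu=n-2-\theta-\tau=\tfrac{(t-1)(n+t-1)}{t}$, which upon inserting the value of $n$ collapses to $(t-1)^2(t^2+t-1)$ in case~(B) and to $(t-1)^3(t+2)/2$ in case~(A), with the companion value $(t+1)^3(t-2)/2$ arising from the $d_\theta$-branch after the relabelling $t\leftrightarrow\theta$; this reproduces the parametrizations \eqref{params*}, \eqref{params*reven1} and \eqref{params*reven2}.

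What remains, and what I expect to be the genuine obstacle, is to extract the integrality and divisibility conditions and to collapse the $d_\theta$-branches to single examples. First, $\tau$ is a root of a monic integer quadratic, so either $t=-\tau\in\mathbb N$, or $\theta,\tau$ are conjugate quadratic irrationalities with equal multiplicities, which by the multiplicity formula forces $\theta+\tau=0$, i.e. $\theta=\sqrt{n-1}$; inserting this into the parametrizations leaves only $n=6$, $r=2$ (the icosahedron, $t=\sqrt5$), explaining why $\sqrt5$ occurs for $r=2$ in case~(A) and no irrational case survives in case~(B). Second, for the $d_\theta$-branches one imposes cover-feasibility --- $\mu$ and the multiplicities positive integers, the first intersection number $a_1=n-2-(r-1)\mu$ nonnegative, and the arithmetic constraints on abelian covers, together with the relevant known non-existence results --- and a careful analysis must be carried out to show these kill every member of each $d_\theta$-branch except $r=\mu=3=\sqrt n$ in case~(B) and $(n,r,\mu)=(28,4,8)$ in case~(A); this elimination is the technical heart. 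Third, the conditions $r\mid t-1$, $(3,r)=1$ and (ii)--(vi) come from the same arithmetic of abelian covers: for a non-principal character $\chi$ of the covering group whose order is a power of a prime $p\mid r$, the twisted adjacency matrix $A_\chi$ is Hermitian over $\mathbb Z[\zeta_r]$ with unit-modulus off-diagonal entries and satisfies $A_\chi^2=(\theta+\tau)A_\chi+(n-1)I$; reducing it modulo a prime ideal over $p$ collapses $A_\chi$ to $J-I$, so the characteristic polynomials obey $(x-\theta)^{m_\theta/(r-1)}(x-\tau)^{m_\tau/(r-1)}\equiv (x-(n-1))(x+1)^{n-1}\pmod p$, and matching roots with multiplicities forces each prime divisor of $r$ to divide $\tau+1=-(t-1)$ (or, on the $d_\theta$-branch, $\theta+1$); sharpening the congruence modulo higher powers of $\mathfrak p$ upgrades this to $r\mid t-1$ when $2r\le t^2+1$ (outside that range the divisibility is verified directly on the few remaining parameters), while the exclusion of $3\mid r$ in case~(B) and the parity clauses "$4\nmid t$" and "$\mu$ even when $t$ is odd" follow from the residual Krein/absolute-bound conditions for the cover and from feasibility of the strongly regular graph (equivalently, regular two-graph) underlying the real equiangular tight frame. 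The converse is then immediate: substituting any listed triple into the formula for $d_\tau$ or $d_\theta$ verifies $n=d^2$ (resp. $n=\binom{d+1}{2}$), so by the Coutinho--Godsil--Shirazi--Zhan construction every abelian cover realising those parameters does give a set of equiangular lines of maximum possible size.
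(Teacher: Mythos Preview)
The paper does not prove this theorem. Theorem~\ref{CGSZthm1} is stated with the attribution ``see \cite{CGSZ}'' and is quoted as a known result of Coutinho, Godsil, Shirazi, and Zhan; no proof (or even proof sketch) appears in the present paper. There is therefore nothing in the paper to compare your proposal against.

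That said, your outline follows the natural strategy one would expect the original \cite{CGSZ} argument to take: compute $d_\tau$ and $d_\theta$ from the multiplicity formulas, impose the absolute-bound equalities $n=d^2$ (complex) or $n=\binom{d+1}{2}$ (real), and solve the resulting diophantine constraints, then feed in the Godsil--Hensel arithmetic on abelian covers to obtain the divisibility clauses. The algebraic reductions you sketch for the $d_\tau$-branch look correct. Where your write-up is thinnest is precisely where you flag it: the elimination of the $d_\theta$-branches down to the two sporadic survivors, and the derivation of clauses $(ii)$--$(vi)$ from the character-theoretic congruence argument. Your congruence idea (reducing $A_\chi$ modulo a prime over $p\mid r$) is the right mechanism for ``$p\mid r\Rightarrow p\mid t-1$'', but the upgrade to the full divisibility $r\mid t-1$ and the parity clauses would need to be written out rather than asserted; these are exactly the steps where a referee of \cite{CGSZ} would want detail, and your sketch does not yet supply it.
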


The existence of abelian covers with even $r$ satisfying conditions of Theorem \ref{CGSZthm1}$(A)$ is known only for the parameter sets $(t,r)= (2, 2)$, $(\sqrt{5}, 2)$, $(3, 2)$ and $(5,2)$, and $(n,r,\mu)=(28,4,8)$. These correspond to the hexagon, the icosahedron graph, Taylor extensions of the Schl\"{a}fli and McLaughlin graphs (as well as their distance-2 graphs), and the collinearity graph of a generalized quadrangle $GQ(3,9)$ with a deleted spread. All these examples are arc-transitive and yield all known examples of real equiangular sets of lines that attain the absolute bound (in dimensions $d\in\{2, 3, 7, 23\}$).

For the parameter set $(n,r,\mu) = (9, 3, 3)$, there exist exactly two non-isomorphic (abelian) covers (see \cite[p. 386]{BCN}). In both examples, the full automorphism group is vertex-transitive and has at most two orbits in its induced action on the arc set. The question of the existence of abelian covers with parameters \eqref{params*} remains open. It is of particular interest, as  covers with odd $r$ satisfying conditions of Theorem \ref{CGSZthm1}$(B)$ give rise to equiangular tight frames that are equivalent to so-called SIC-POVM sets, which play an important role in some problems of quantum information theory (see the discussion in \cite{CGSZ}).

This raises the following question.

\begin{prob}
Classify $G$-vertex-transitive abelian covers $\Gamma$ with parameters \eqref{params*reven1}, \eqref{params*reven2} or \eqref{params*} with a small number of $G$-orbits in the induced action of $G$ on the arc set of $\Gamma$.

\end{prob}

The author's paper \cite{Tsi24} completed the classification of arc-transitive covers with non-solvable automorphism groups.

\begin{theorem}[see \cite{Tsi24}] Suppose $\Gamma$ is an unknown edge-transitive $(n,r,\mu)$\--cover and $\Sigma=\cF(\Gamma)$.
Then $\aut(\Gamma)^{\Sigma}\le \AGmL_1(|\Sigma|)$ and $|\cg(\Gamma)|=r$. If, in addition, $\aut(\Gamma)$ acts transitively on arcs,
then it is solvable, $\mu>1$ and $n=r\mu$.
\end{theorem}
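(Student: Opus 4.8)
The plan is to push the symmetry of $\Gamma$ down to the induced action on fibres, apply the classification of $2$-transitive (resp.\ $2$-homogeneous) groups, and then run the surviving cases against the known list of covers. Put $\bar G:=\aut(\Gamma)^{\Sigma}$. Since every edge of $\Gamma$ joins two vertices in distinct fibres and the subgraph induced on any two fibres is a perfect matching, edge-transitivity of $\Gamma$ forces $\bar G$ to be transitive on the $2$-subsets of $\Sigma$, i.e.\ $\bar G$ is a $2$-homogeneous permutation group of degree $n$, and arc-transitivity forces $\bar G$ to be $2$-transitive. If $\bar G$ is $2$-homogeneous but not $2$-transitive, then by a classical result (going back to Kantor) $n$ is a prime power with $n\equiv 3\pmod 4$ and $\bar G\le\AGmL_1(n)$, which is already the desired conclusion; so one may assume $\bar G$ is $2$-transitive and invoke the CFSG-based list --- $\bar G$ of affine type with point stabilizer a transitive linear group from Hering's theorem ($\GamL_1$, $\mathrm{SL}_d(q)$, $\mathrm{Sp}_{2d}(q)$, $G_2(q)'$, or one of the exceptional small cases), or $\bar G$ of almost simple type with socle among $\alt_m$, $\mathrm{PSL}_d(q)$, the rank-one simple groups, and the sporadic $2$-transitive groups.

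Next I would eliminate every case except the one in which the point stabilizer of $\bar G$ lies in $\GamL_1(n)$, i.e.\ $\bar G\le\AGmL_1(n)$. A $G$-edge-transitive antipodal $r$-cover of $K_n$ is encoded, in the sense of Godsil--Hensel, by $\bar G$ together with a $\bar G$-compatible voltage assignment valued in the covering group $\cg(\Gamma)$, and this is rigid: such a cover has intersection array $\{n-1,\mu(r-1),1;1,\mu,n-1\}$ with $a_1=n-2-\mu(r-1)\ge 0$, it satisfies $\theta\tau=-(n-1)$ and $m_\theta+m_\tau=n(r-1)$ with $m_\theta,m_\tau$ positive integers, it must pass the Krein and absolute bounds, and it obeys divisibility constraints tying $|\cg(\Gamma)|$ (hence $r$) to $\bar G$. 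For each candidate $\bar G$ one confronts these conditions with the suborbit structure imposed by $\bar G$; whenever a compatible cover does arise one identifies it with a known example, using the already-completed classifications of arc- and edge-transitive antipodal covers of $K_n$ whose fibre action has socle $\alt_m$, $\mathrm{PSL}_d(q)$, a rank-one simple group or a sporadic group, together with the higher-dimensional affine cases. The conclusion is that every such cover is already known, so an unknown $\Gamma$ must have $\bar G\le\AGmL_1(n)$.

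For the covering group, $\cg(\Gamma)=\ker\bigl(\aut(\Gamma)\to\bar G\bigr)$ is normal in $\aut(\Gamma)$ and semiregular on $V(\Gamma)$: an element fixing a vertex $v$ fixes the unique neighbour of $v$ in each of the other $n-1$ fibres, hence all of $N(v)$, and then all of $\Gamma$ by connectedness and the $\mu$-condition. Since its orbits sit inside fibres, $|\cg(\Gamma)|$ divides $r$; and if $|\cg(\Gamma)|<r$, a reduction to the proper antipodal quotient $\Gamma/\cg(\Gamma)$ of $K_n$ --- which still carries an edge-transitive action of $\bar G$ --- (or, when $\cg(\Gamma)=1$, a direct argument) together with induction on $r$ and the previous step forces $\Gamma$ to be known, a contradiction. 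Hence $|\cg(\Gamma)|=r$ and $\cg(\Gamma)$ is regular on every fibre.

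Finally, assume $\aut(\Gamma)$ is arc-transitive. Then $\bar G$ is a $2$-transitive subgroup of $\AGmL_1(n)=\mathbb{F}_n^{+}\rtimes(\mathbb{F}_n^{*}\rtimes\mathrm{Gal}(\mathbb{F}_n))$, which is solvable; as $\cg(\Gamma)$ is the kernel of $\aut(\Gamma)\to\bar G$, it remains to see $\cg(\Gamma)$ is solvable, which I would read off from the constraints arc-transitivity and distance-regularity impose on the $\cg(\Gamma)$-valued voltages (the vertex stabilizer embeds into $\GamL_1(n)$ and is transitive on the $n-1$ neighbours of a vertex), whence $\aut(\Gamma)$ is solvable. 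The value $\mu=1$ forces a geodetic/Moore-type local structure whose arc-transitive representatives are known, so $\mu>1$; and a counting argument on the $r$ matched edges between two fixed fibres, combined with $b_1=\mu(r-1)$, gives $a_1=\mu-2$, equivalently $n=r\mu$. The main obstacle is clearly the second step: the long, case-by-case pass over the full list of $2$-transitive groups, where for each candidate one must decide feasibility of a compatible distance-regular cover and, when it is feasible, recognise it among the previously classified covers --- the group theory (CFSG, Hering's theorem) enters as a black box, and the real work is the compatibility bookkeeping linking suborbit lengths, $|\cg(\Gamma)|$ and $(n,r,\mu)$ to the numerical feasibility conditions.
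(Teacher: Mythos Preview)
The paper does not prove this theorem at all: it is quoted verbatim from \cite{Tsi24} as an external input, with no argument given here. So there is no ``paper's own proof'' to compare your proposal against. Your outline is, in broad strokes, the expected strategy for such a result --- reduce to the induced action on $\Sigma$, invoke the classification of $2$-homogeneous/$2$-transitive groups, and eliminate every case except $\bar G\le\AGmL_1(n)$ by matching against the prior classifications of edge- and arc-transitive covers for each socle type --- and this is indeed what the series of papers culminating in \cite{Tsi24} does.

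That said, a few places in your sketch are not yet arguments. The solvability of $\aut(\Gamma)$ does not follow merely from $\bar G\le\AGmL_1(n)$ being solvable: you still need $\cg(\Gamma)$ solvable, and ``reading it off from constraints on voltages'' is not a proof; you need an actual reason (for instance, that in this regime $r$ is forced to be a prime power, or a direct structural argument). Your derivation of $n=r\mu$ asserts $a_1=\mu-2$ from ``a counting argument on the $r$ matched edges between two fixed fibres,'' but no such count yields $\lambda=\mu-2$ without further input from the $\AGmL_1$ action --- you have to explain what that input is. Likewise, ``$\mu=1$ forces a geodetic/Moore-type local structure whose arc-transitive representatives are known'' is a claim, not an argument; the actual exclusion of $\mu=1$ in this context uses specific structure, not a vague appeal to Moore graphs. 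Finally, your inductive step for $|\cg(\Gamma)|=r$ via the quotient $\Gamma/\cg(\Gamma)$ presumes that the quotient is again an edge-transitive cover of $K_n$ with the same fibre action and strictly smaller $r$, and that the ``unknown'' hypothesis transfers; both points need to be made precise (and the base case $\cg(\Gamma)=1$ handled separately, as you note).
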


We formulate below  several results on abelian covers, which follow from the classification obtained.

\begin{theorem}[see \cite{Tsi24}]
Arc-transitive abelian $(n,r,\mu)$-covers with nonsolvable automorphism groups are exhausted by
\begin{itemize}\item distance-transitive Taylor covers, \item Thas--Somma covers,
\item collinearity graphs of  generalized quadrangles dual to ${\rm H}(3,q^2)$'s with a deleted cyclic spread,
\end{itemize}
together with their quotient covers obtained by quotiening by orbits of proper subgroups of the covering group.

All examples of edge-transitive abelian  $(n,r,\mu)$-covers $\Gamma$ with ${\aut}(\Gamma)^{\cF(\Gamma)}\not\le \AGmL_1(|\cF(\Gamma)|)$ are known.
\end{theorem}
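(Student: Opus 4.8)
The plan is to derive the statement from the structural dichotomy of the preceding theorem rather than to argue from scratch. Since an arc-transitive graph is both vertex- and edge-transitive, for an arc-transitive abelian $(n,r,\mu)$-cover $\Gamma$ we may apply the preceding theorem with $\Sigma=\cF(\Gamma)$: either $\Gamma$ occurs in the explicit list of known edge-transitive covers produced by the classification of \cite{Tsi24}, or $\aut(\Gamma)^{\Sigma}\le\AGmL_1(|\Sigma|)$, $|\cg(\Gamma)|=r$, and --- because $\Gamma$ is in fact arc-transitive --- $\aut(\Gamma)$ is solvable. Consequently, once we assume $\aut(\Gamma)$ nonsolvable, $\Gamma$ must lie in that known list, and the task reduces to two things: (i) reading off from the list exactly the arc-transitive abelian covers with nonsolvable automorphism group, and (ii) checking that this family is closed under, and generated by, the operation of quotienting a cover by the orbits of a subgroup of its covering group.

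For step (i) I would use that the known edge-transitive covers with $\aut(\Gamma)^{\Sigma}\not\le\AGmL_1(|\Sigma|)$ are precisely those for which $\aut(\Gamma)^{\Sigma}$ is $2$-transitive of ``large'' type on $\Sigma$, so the list is stratified by the classification of finite $2$-transitive groups (affine socle, or almost simple socle among $\mathrm{PSL}_d(q)$, the unitary groups, $\mathrm{Sp}_{2d}(2)$, $\mathrm{PSL}_2(q)$ on $q+1$ points, the Suzuki and Ree families, alternating groups, and the sporadic $2$-transitive actions). For each stratum one determines when the cover can be taken simultaneously abelian and arc-transitive: the affine case produces precisely the Thas--Somma covers, the covering group being the relevant abelian subgroup acting on the translation module; the strata with socle $\mathrm{PSL}_2(q)$, a $3$-dimensional unitary group, or $\mathrm{Sp}_{2d}(2)$ yield the Taylor covers, i.e.\ antipodal double covers of $K_n$, which are distance-regular of diameter $3$ and which, under the arc-transitivity hypothesis (arc-transitivity together with vertex-transitivity and the centrality of the antipodal map forces transitivity on each distance class), are distance-transitive; the stratum attached to $\mathrm{PSU}_4(q)\cong\mathrm{P}\Omega_6^{-}(q)$ yields the collinearity graphs of the generalized quadrangles dual to $\mathrm{H}(3,q^2)$ with a cyclic spread deleted; and all remaining $2$-transitive actions either admit no abelian distance-regular cover with the required matching/intersection structure or collapse into one of the preceding three towers. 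This case-by-case inspection is where the bulk of the work lies.

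For step (ii), if $\Gamma$ is one of the three families and $H$ is a proper subgroup of the abelian group $\cg(\Gamma)$, then the quotient $\Gamma/H$, whose vertices are the $H$-orbits, is again a distance-regular antipodal cover of the same $K_n$; it is abelian with covering group $\cg(\Gamma)/H$, and it remains arc-transitive, since $N_{\aut(\Gamma)}(H)/H\le\aut(\Gamma/H)$ acts arc-transitively and surjects onto $\aut(\Gamma)^{\Sigma}$, hence stays nonsolvable. Conversely, any nonsolvable arc-transitive abelian cover appearing in the known list is, by the module structure of its covering group over a fibre stabilizer, visibly such a quotient of one of the three maximal covers, so no further examples arise. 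Finally, the second assertion of the theorem --- that every edge-transitive abelian cover $\Gamma$ with $\aut(\Gamma)^{\cF(\Gamma)}\not\le\AGmL_1(|\cF(\Gamma)|)$ is known --- is merely the contrapositive of the first clause of the preceding theorem specialized to abelian covers, and requires no new argument.

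The step I expect to be the main obstacle is the per-stratum analysis in (i): deciding, for each family of finite $2$-transitive groups, whether it supports an abelian arc-transitive distance-regular cover, pinning down the isomorphism type of the covering group in each case, and excluding the sporadic possibilities --- this is considerably more delicate than the soft quotient-closure argument in (ii) or the purely formal second assertion.
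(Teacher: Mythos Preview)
The paper does not prove this theorem: it is quoted from the author's earlier work \cite{Tsi24} and stated in the introduction without argument. There is therefore no proof in the present paper to compare your proposal against.

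That said, your high-level strategy---use the preceding (also cited) dichotomy to force any nonsolvable arc-transitive abelian cover into the ``known'' list, then inspect that list stratum by stratum over the $2$-transitive socles on $\Sigma$---is the natural route and is surely the one taken in \cite{Tsi24}. Two points in your sketch are loose, however. First, your assignment of families to strata in~(i) is not accurate: Taylor covers are precisely the $r=2$ case, and the distance-transitive ones arise from a broader range of socles than $\mathrm{PSL}_2(q)$, $\mathrm{PSU}_3(q)$, $\mathrm{Sp}_{2d}(2)$ (see the Godsil--Liebler--Praeger classification), while conversely those socles also support covers with $r>2$; and ``affine $\Rightarrow$ Thas--Somma'' is an oversimplification, since the Thas--Somma construction is a specific bilinear-form construction rather than the generic affine case. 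The scheme is right, but the case division you have written down is not the one you would actually carry out. Second, in~(ii) your claim that $N_{\aut(\Gamma)}(H)/H$ acts arc-transitively on $\Gamma^H$ presupposes that $N_{\aut(\Gamma)}(H)$ is already arc-transitive on $\Gamma$, which is not automatic for an arbitrary $H\le\cg(\Gamma)$; you need either that $H$ is invariant under $\aut(\Gamma)$ (e.g.\ characteristic in $\cg(\Gamma)$), or an explicit check for the families in question. Note also that the theorem only asserts that the list \emph{exhausts} the arc-transitive nonsolvable abelian covers; it does not claim every quotient on the list has these properties, so the ``closure'' direction of your~(ii) is not strictly required.
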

\begin{corollary}
The only edge-transitive abelian $(n,r,\mu)$-covers with parameters \eqref{params*reven1} or  \eqref{params*reven2} are the hexagon, the  icosahedron graph, Taylor extensions of the Schläfli and McLaughlin  graphs, and  their distance-$2$ graphs.
\end{corollary}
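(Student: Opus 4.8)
The plan is to confront the explicit parameter triples in (\ref{params*reven1}) and (\ref{params*reven2}) with the classification of edge-transitive abelian covers recalled above from \cite{Tsi24}. In both cases $r$ is even and, by Theorem~\ref{CGSZthm1}(A), $n=(t^2-2)(t^2-1)/2$ for some $t=-\tau\in\{\sqrt5\}\cup\mathbb N$; I shall use the identity $8n+1=(2t^2-3)^2$. Writing $\Sigma=\cF(\Gamma)$, the governing alternative provided by \cite{Tsi24} is: either $\aut(\Gamma)^{\Sigma}$ embeds in $\AGmL_1(|\Sigma|)$ — which forces $n=|\Sigma|$ to be a prime power — or $\Gamma$ occurs in the explicit list, namely $\Gamma$ is a distance-transitive Taylor cover, a Thas--Somma cover, the collinearity graph of a generalized quadrangle dual to an $H(3,q^2)$ with a deleted cyclic spread, or a quotient of one of these by a subgroup of its covering group. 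I would treat the two branches separately.

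For the first branch one needs the elementary fact that $n=(t-1)(t+1)(t^2-2)/2$ is a prime power only for $t=2$: using $\gcd(t-1,t+1)\le 2$, $\gcd(t\mp 1,t^2-2)=1$ and $t^2-2\ge 7$ for $t\ge 3$, a short split on $t\bmod 4$ writes $n$, for $t\ge 3$, as a product of at least two coprime factors each exceeding $1$ (while $t=\sqrt5$ gives $n=6$). So in case (\ref{params*reven1}) this branch is empty, and in case (\ref{params*reven2}) it forces $t=2$, hence — as $\mu\ge 1$ discards the other sign — $(n,r,\mu)=(3,2,1)$; since the fibres of a diameter-$3$ antipodal cover are exactly its antipodal classes, the only such cover is the $6$-cycle, i.e.\ the hexagon. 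In the second branch, Thas--Somma covers and their quotients have $n$ a prime power and are therefore ruled out once $n$ is not a prime power. A cover obtained from $Q^-(5,q)$ (the dual of $H(3,q^2)$) has parameters $(q^3+1,\,q+1,\,q^2-1)$ and smallest eigenvalue $\tau=-q^2$, and its proper quotient covers keep the same $\theta,\tau$; since $r=q+1$ is even and $\ge 4$ we would be in case (\ref{params*reven1}), which is the single branch $\mu=M_-(t):=(t-1)^3(t+2)/(2r)$ with $\tau=-t$, forcing $t=q^2$ and $q^3+1=((q^2)^2-2)((q^2)^2-1)/2$ — impossible for every prime power $q$, since the right side is far larger than $q^3+1$. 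Hence no $\gq$-cover satisfies (\ref{params*reven1}) or (\ref{params*reven2}); the closest coincidence, the $\gq(3,9)$-cover with parameters $(28,4,8)$, has $\mu=8\ne 5=M_-(3)$ for $r=4$ and is exactly the \emph{separate} exceptional triple of Theorem~\ref{CGSZthm1}(A), not a member of (\ref{params*reven1}).

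The remaining, and principal, case is $\Gamma$ a distance-transitive Taylor cover; then $r=2$, so (\ref{params*reven2}) holds and $\Gamma$ has no proper quotient cover. Here I would run through the classification of distance-transitive Taylor covers (see \cite{BCN}), keeping exactly those $\Gamma$ for which $\sqrt{8n+1}=2t^2-3$ with $t\in\{\sqrt5\}\cup\mathbb N$ and $\mu\in\{M_-(t),M_+(t)\}$ — equivalently, those attaining one of the two extremal eigenvalue values $-t$, $-t(t^2-3)/2$ of the absolute bound. The crown graphs $K_{m,m}-mK_2$ (where $\tau=1-m$) survive only for $m=3$, i.e.\ $C_6$; in the remaining infinite families $\sqrt{8n+1}$ has a rigid shape — e.g.\ $2^{m+1}\pm 1$ for the $\mathrm{Sp}_{2m}(2)$/$O^{\pm}_{2m}(2)$-covers and $\sqrt{8q+9}$ or $\sqrt{8q^3+9}$ for the $\mathrm{PSL}_2(q)$- and $\mathrm{PSU}_3(q)$-covers — and the condition $8n+1=(2t^2-3)^2$ with a valid $t$ cuts these down, besides the degenerate $C_6$, to just the icosahedron ($t=\sqrt5$, $n=6$) and the Taylor extension of the Schl\"afli graph ($t=3$, $n=28$); among the sporadic covers only the one from the regular two-graph on $276$ points with automorphism group $\mathrm{Co}_3$ survives, with $t=5$, giving the Taylor extension of the McLaughlin graph. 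Finally, $\Gamma_2$ is again a distance-transitive Taylor cover whenever $\mu\ge 2$, with $\mu$ replaced by $n-2-\mu$ (which toggles $M_-(t)\leftrightarrow M_+(t)$), so the list is closed under $\Gamma\mapsto\Gamma_2$, which adjoins the distance-$2$ graphs of the Schl\"afli and McLaughlin extensions (the hexagon being degenerate, $\mu=1$, and the icosahedron satisfying $\mu=n-2-\mu$, hence self-paired). A direct check that each graph so obtained indeed has parameters of the claimed form then completes the proof. The delicate point is that solvability of the Diophantine condition $8n+1=(2t^2-3)^2$ does not suffice: one must invoke the classification to know that a distance-transitive Taylor cover with the matching parameters genuinely exists, and likewise that $\Gamma\mapsto\Gamma_2$ produces new covers only for the Schl\"afli and McLaughlin extensions.
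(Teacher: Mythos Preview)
The paper states this corollary without proof, presenting it as an immediate consequence of the classification results from \cite{Tsi24} quoted just before it. Your reconstruction follows the natural line one would take: dichotomize on whether $\aut(\Gamma)^{\Sigma}\le\AGmL_1(|\Sigma|)$, reduce the first branch to $t=2$ via the coprimality argument showing $n=(t^2-2)(t^2-1)/2$ is a prime power only then, and in the second branch sift the three known families against the parameter constraints. That skeleton is correct, and your handling of the Taylor case is sound.

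There is, however, a genuine gap in your treatment of the $\gq$-family. You argue that the cover arising from the dual of $H(3,q^2)$ has $\tau=-q^2$, and that since $r=q+1\ge4$ places it under \eqref{params*reven1} (where $\tau=-t$), one gets $t=q^2$ and hence a contradiction on $n$. But proper quotients of this cover can have $r'=2$, which puts them under \eqref{params*reven2}; and that family has \emph{two} branches, corresponding to $\tau=-t$ and $\tau=-t(t^2-3)/2$. You check only the first. In the second branch the system $q^2=t(t^2-3)/2$, $q^3+1=(t^2-2)(t^2-1)/2$ does have a solution, namely $q=t=3$: the index-$2$ quotient of the $\gq(3,9)$-cover has parameters $(28,2,16)$, which lies squarely in \eqref{params*reven2}. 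Your sentence ``Hence no $\gq$-cover satisfies \eqref{params*reven1} or \eqref{params*reven2}'' is therefore false as written. The conclusion of the corollary survives only because the $(28,2,16)$-cover is unique and coincides with the distance-$2$ graph of the Taylor extension of the Schl\"afli graph, already on your list --- but your argument does not establish this, and the coincidence must be addressed explicitly.
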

\begin{corollary}
The only edge-transitive abelian $(28,4,8)$-cover is the collinearity graph of the generalized quadrangle dual to $H(3,9)$ with a  deleted cyclic spread.
 \end{corollary}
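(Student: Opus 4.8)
The plan is to deduce the statement from the two theorems of \cite{Tsi24} quoted above by a short parameter comparison. Let $\Gamma$ be an edge-transitive abelian $(28,4,8)$-cover, put $G=\aut(\Gamma)$ and $\Sigma=\cF(\Gamma)$, so that $|\Sigma|=n=28$. Since any two distinct fibres of $\Gamma$ are joined by a perfect matching, edge-transitivity of $G$ forces $G$ to act transitively on the unordered pairs of distinct fibres; hence $G^{\Sigma}$ is a $2$-homogeneous, in particular transitive, permutation group of degree $28$.

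The first point is that $G^{\Sigma}\not\le\AGmL_1(|\Sigma|)$. Indeed, $\AGmL_1(q)$ is defined only when $q$ is a prime power, whereas $28=2^{2}\cdot 7$ is not a prime power, so no transitive permutation group of degree $28$ can be realised inside such a group. Therefore, by the theorem of \cite{Tsi24} asserting that every edge-transitive abelian cover $\Gamma$ with $\aut(\Gamma)^{\cF(\Gamma)}\not\le\AGmL_1(|\cF(\Gamma)|)$ is known, $\Gamma$ belongs to the list of the second quoted theorem: $\Gamma$ is a distance-transitive Taylor cover, a Thas--Somma cover, the collinearity graph of a generalized quadrangle dual to some $\mathrm{H}(3,q^{2})$ with a deleted cyclic spread, or a quotient of one of these by the orbits of a proper subgroup of its covering group.

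It then remains to single out the members of this list whose parameters are $(28,4,8)$. Taylor covers are $2$-covers, and quotiening a Taylor cover by a proper subgroup of its covering group $\mathbb{Z}_{2}$ returns the cover itself, so $r=2\ne 4$ excludes this family. A Thas--Somma cover is a cover of a complete graph $K_{q^{d}}$ with $q$ a prime power, so its parameter $n$ --- and the $n$ of any of its quotients --- is a prime power; as $28$ is not, this family is excluded as well. For the collinearity graph of the generalized quadrangle dual to $\mathrm{H}(3,q^{2})$, a generalized quadrangle of order $(q,q^{2})$ on $(q+1)(q^{3}+1)$ points, the deletion of a cyclic spread yields an $(n,r,\mu)$-cover with $n=q^{3}+1$ and $r=q+1$; the equation $q^{3}+1=28$ gives $q=3$, hence $r=4$, matching the hypotheses, while quotiening the resulting cover by the unique proper nontrivial subgroup of its cyclic covering group of order $4$ produces only a $2$-cover. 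Thus $\Gamma$ must be the collinearity graph of the generalized quadrangle dual to $\mathrm{H}(3,9)$, that is, $\mathrm{GQ}(3,9)=Q^{-}(5,3)$, with a deleted cyclic spread; a direct count of common neighbours in $\mathrm{GQ}(3,9)$ confirms that the remaining parameter equals $\mu=8$ (as also recorded in the discussion after Theorem~\ref{CGSZthm1}), so the hypotheses are consistent. Conversely, this cover has a cyclic covering group of order $4$, hence is abelian, and it is arc-transitive, hence edge-transitive, so it indeed has the stated properties.

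I expect the only mildly delicate steps to be bookkeeping ones: recalling the parametrizations of the Thas--Somma covers and of the generalized quadrangle covers precisely enough that no quotient construction can slip a genuine $(28,4,8)$-cover past the parameter filter, and, for the word ``only'', invoking the uniqueness of $\mathrm{GQ}(3,9)$ and of the relevant cyclic spread (so that the resulting cover is determined up to isomorphism). Both are classical facts about generalized quadrangles, and both are in any case already implicit in the classification of \cite{Tsi24}.
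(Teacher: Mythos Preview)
Your argument is correct and is precisely the intended deduction: the paper states this corollary without proof, as an immediate consequence of the two quoted results from \cite{Tsi24}, and your parameter comparison (ruling out $\AGmL_1(28)$ since $28$ is not a prime power, then eliminating Taylor, Thas--Somma and quotient cases) is exactly the implicit reasoning. The only cosmetic point is that the list you invoke is literally stated for \emph{arc}-transitive abelian covers with nonsolvable automorphism group, whereas you need it for edge-transitive ones; but the second sentence of that theorem (``all examples \ldots\ are known'') covers this, and in any case for $n=28$ the induced group $G^{\Sigma}$ is $2$-homogeneous of even degree, hence $2$-transitive, hence nonsolvable, which together with the regular action of $\cg(\Gamma)$ already forces arc-transitivity.
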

\begin{corollary}[see \cite{Tsi24}]
There is no edge-transitive abelian $(n,r,\mu)$-cover with parameters \eqref{params*}.
\end{corollary}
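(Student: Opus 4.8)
The plan is a proof by contradiction, splitting according to whether $\aut(\Gamma)$ induces a subgroup of $\AGmL_1(n)$ on the set of fibres. So suppose $\Gamma$ is an edge-transitive abelian $(n,r,\mu)$-cover whose parameters are those of \eqref{params*}; recall that this means $t:=-\tau$ is an integer with $t\geq 2$, that $r\geq 2$ is odd, that $(3,r)=1$, and that $r\mid t-1$.

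\emph{Case 1: $\aut(\Gamma)^{\cF(\Gamma)}\not\leq \AGmL_1(n)$.} By the theorem of \cite{Tsi24} recalled above, $\Gamma$ is then one of the explicitly classified edge-transitive abelian covers: a distance-transitive Taylor cover, a Thas--Somma cover, the collinearity graph of a generalized quadrangle dual to some $H(3,q^2)$ with a deleted cyclic spread, or a quotient of one of these by a subgroup of its covering group. I would rule out these families one at a time. Taylor covers and their quotients have $r\leq 2$, which contradicts that $r$ is odd and $r\geq 2$. Thas--Somma covers and their quotients satisfy $n=r\mu$ (and this relation survives quotienting, since a quotient by a subgroup of order $h$ turns $(n,r,\mu)$ into $(n,r/h,h\mu)$); substituting $n=r\mu$ into \eqref{params*} gives $(t-1)^2(t+1)^2=(t-1)^2(t^2+t-1)$, hence $(t+1)^2=t^2+t-1$, i.e. $t=-2$, which is absurd. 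Finally, the Hermitian-type GQ covers and their quotients have $n=q^3+1$ with $q\geq 2$ a prime power; a short computation with their parameters shows that $q^3+1=(t^2-1)^2$ forces $q=2$ (equivalently, one invokes the classical determination of the integral points on $y^2=x^3+1$), so $n=9$, $t=2$, and then $r\mid t-1=1$ gives $r=1$, again contradicting $r\geq 2$.

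\emph{Case 2: $\aut(\Gamma)^{\cF(\Gamma)}\leq \AGmL_1(n)$.} Then $n=|\cF(\Gamma)|$ is necessarily a prime power $p^d$ (that is when $\AGmL_1(n)$ is defined). Since $n=(t^2-1)^2$ is a perfect square and $t^2-1\geq 3$, we get $t^2-1=p^s$ with $d=2s$. Now $t^2-1=(t-1)(t+1)$ and $\gcd(t-1,t+1)\in\{1,2\}$; inspecting the cases ``$p$ odd'' (where the two factors are coprime, so the smaller one is $1$) and ``$p=2$'' (where both factors are powers of $2$ differing by $2$, hence equal to $2$ and $4$) shows that either $t=2$ (so $t^2-1=3$) or $t=3$ (so $t^2-1=8$). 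In either case $t-1\in\{1,2\}$, and since $r$ is odd and $r\mid t-1$ we conclude $r=1$, contradicting $r\geq 2$.

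As both cases are impossible, there is no edge-transitive abelian $(n,r,\mu)$-cover with parameters \eqref{params*}. The main obstacle is Case 1: handling it requires the precise parameters of the Thas--Somma and Hermitian-type GQ families and how they transform under quotienting, together with the fact (classical, though not entirely elementary) that $y^2=x^3+1$ has no integral solution with $x\geq 3$; Case 2, by contrast, is a routine arithmetic argument.
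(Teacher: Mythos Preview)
The paper does not actually supply a proof of this corollary; it is attributed to \cite{Tsi24}, so there is no in-paper argument to compare against. Your two-case split is exactly the right skeleton, and Case~2 is clean and correct: once $\aut(\Gamma)^{\cF(\Gamma)}\le\AGmL_1(n)$ forces $n$ to be a prime power, the factorisation $t^2-1=(t-1)(t+1)$ pins $t$ to $2$ or $3$, which is incompatible with $r\mid t-1$, $r$ odd, $(3,r)=1$, $r\ge 2$ (indeed these conditions already force $r\ge 5$ and $t\ge 6$).

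Case~1 is essentially right as well, but two of your family-by-family eliminations are heavier than necessary and one of them rests on an unreferenced claim. For the Thas--Somma family you assert $n=r\mu$; this is true, but it is simpler (and avoids having to source that identity) to note that Thas--Somma covers have $n=q^{2m}$ a prime power, whereupon your own Case~2 computation already kills them. For the Hermitian $GQ$ family you appeal to the integral points of $y^2=x^3+1$; this works, but you can avoid Mordell entirely: since $r\ge 5$ and $r\mid t-1$ we have $t\ge 6$, so $(t^2-1)^2\ge 35^2>q^3+1$ would force $q\ge 11$, while on the other hand the $GQ$ cover and its quotients have $r\mid q+1$, giving $t-1\ge r$ and hence $(t^2-1)^2\gg q^3+1$ --- a direct size contradiction without any elliptic-curve input.

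One genuine caveat: in Case~1 you identify the ``known'' edge-transitive abelian covers with the three listed families (Taylor, Thas--Somma, Hermitian $GQ$) and their quotients. The theorem you quote gives that list only for \emph{arc-transitive} abelian covers with \emph{nonsolvable} automorphism group; the paper's accompanying sentence says merely that all edge-transitive abelian covers with $\aut(\Gamma)^{\cF(\Gamma)}\not\le\AGmL_1$ are ``known'', without restating the list. In practice this is harmless --- arc-transitivity together with $\aut(\Gamma)^{\Sigma}\not\le\AGmL_1$ already forces a nonsolvable $2$-transitive action on $\Sigma$, and the residual half-arc-transitive possibilities are handled in \cite{Tsi24} --- but strictly speaking you should point to \cite{Tsi24} for the full edge-transitive list rather than read it off the arc-transitive statement reproduced here.
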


The main goal of this paper is to classify abelian covers $\Gamma$ with parameters \eqref{params*} whose full automorphism group is vertex-transitive and has exactly two orbits in its induced action on the arc set of $\Gamma$. This condition is satisfied precisely when the group $G=\aut(\Gamma)$ induces a transitive permutation group of rank $3$ on the set $\Sigma$ of cover fibres (see Lemma \ref{Prel:Lemma1} in Section \ref{Preliminaries}).

We base our arguments on several fundamental classification results on permutation groups of rank  at most $3$, including the classification of finite $2$-transitive permutation groups. In what follows, we consider separately the cases of primitive and imprimitive groups $G^{\Sigma}$.
\medskip

The primitive permutation groups $X$ of degree $n$ and rank $3$ are classified (see the discussion and references in \cite[Chapter 11]{BM}) and are divided into three types:
\begin{itemize}
\item[$(1)$] { Almost simple, that is, $S\trianglelefteq X\le \aut(S)$, where $S$ is a non-abelian simple group};
 \item[$(2)$] Affine, that is, $X$ has a normal regular elementary abelian group of order $n$, so $n$ is a prime power;
\item[$(3)$] Wreathed type, that is, $T \times T \trianglelefteq X \leq T_0 \wr S_2$, where $T_0$ is a $2$-transitive group of degree $n_0$ with simple socle $T$ and $n=n_0^2$.
\end{itemize}

By one of the above-mentioned results   (e.g. see \cite[p. 386]{BCN}), there are precisely two abelian covers
satisfying conditions   of Theorem \ref{CGSZthm1}$(B)$, which  have affine  groups $G^{\Sigma}$ (of rank at most $3$). Abelian covers with general parameters having almost simple  primitive groups $G^{\Sigma}$ were studied in a series of papers by the author (see references in \cite{Tsi24}). 
Here we prove the following result, describing abelian covers with primitive groups $G^{\Sigma}$ of the third type.

\begin{theorem}\label{MainThm:PrimitiveCase} Let $\Gamma$ be an abelian cover with parameters \eqref{params*}, $G=\aut(\Gamma)$, and $\Sigma=\cF(\Gamma)$. If $G^{\Sigma}$ is a primitive group of rank $3$, then it is either almost simple or has wreathed type, $T\times T \trianglelefteq G^{\Sigma} \le T_0 \wr 2$, where $T_0$ is a $2$-transitive group of degree $n_0 = t^2 - 1=\frac{q^4-1}{q-1}$ such that $T=\soc(T_0)\simeq L_4(q)$.
\end{theorem}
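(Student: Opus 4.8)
The plan is to exploit the arithmetic of the wreathed case together with the very rigid parameter constraints in \eqref{params*}. Assume $G^{\Sigma}$ is a primitive rank $3$ group that is neither almost simple nor affine; then by the trichotomy above it has wreathed type, so $T\times T\trianglelefteq G^{\Sigma}\le T_0\wr S_2$, where $T_0$ is a $2$-transitive group of degree $n_0$ with non-abelian simple socle $T=\soc(T_0)$, and $n=n_0^2$. Comparing with \eqref{params*}, where $n=(t^2-1)^2$, we get $n_0=t^2-1$ (the positive square root), so the first task is to run down the Cameron--Kantor list of $2$-transitive groups and determine for which simple socles $T$ a $2$-transitive action of degree $n_0=t^2-1$ can exist subject to the side conditions $(3,r)=1$, $r\mid t-1$, and $\mu=(t-1)^2(t^2+t-1)/r\in\mathbb{N}$. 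In particular $t^2-1=(t-1)(t+1)$ must be expressible as the degree of a $2$-transitive group, and since $t\ge 2$ here $n_0$ grows like $t^2$, which already eliminates the sporadic and small-degree entries for all but finitely many $t$, to be checked by hand.

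Next I would bring in the covering-group constraint. Since $\Gamma$ is abelian, $\cg(\Gamma)$ is abelian of order $r$ and is normal in $G$; the induced action identifies $G/\cg(\Gamma)$ with $G^{\Sigma}$, and because $T\times T$ is a non-abelian minimal-type subgroup of $G^{\Sigma}$ acting with two ``diagonal-like'' orbits on $\Sigma\times\Sigma\setminus\mathrm{diag}$, the structure of $G$ as an extension of $G^{\Sigma}$ by the abelian group $\cg(\Gamma)$ is tightly controlled. Here I expect to invoke the preliminary lemmas of Section \ref{Preliminaries} (the rank $3$ $\Leftrightarrow$ two arc-orbits dictionary, and whatever is proved there about how $\cg(\Gamma)$ sits inside $G$) to argue that the action of $G$ on a fibre, combined with the product action on $\Sigma$, forces the two factors $T_0$ to act on a ``grid'' $\sqrt{n}\times\sqrt{n}$ of fibres in a way compatible with the matching structure of the cover. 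The key numerical output should be that the two subdegrees of the rank $3$ group $G^{\Sigma}$ are $2(n_0-1)$ and $(n_0-1)^2$ (the standard wreathed subdegrees), and matching these against the subdegrees forced by \eqref{params*} pins down $n_0$ and hence $t$ in terms of the parameters of $T$.

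The decisive step is identifying $T$. Among the $2$-transitive socles, the ones whose natural degree has the shape $t^2-1=\frac{q^4-1}{q-1}$ for some prime power $q$ — i.e. $n_0=q^3+q^2+q+1$, equivalently $t^2=q^3+q^2+q+2$ — are extremely restrictive; but the relevant family is rather $T\simeq L_4(q)$ acting $2$-transitively on the $\frac{q^4-1}{q-1}$ points (or hyperplanes) of $\mathrm{PG}(3,q)$, which has degree exactly $\frac{q^4-1}{q-1}$. So the claim to establish is precisely $n_0=\frac{q^4-1}{q-1}$ with $T=L_4(q)$, which I would obtain by showing all other $2$-transitive actions of degree $n_0$ either fail the divisibility/coprimality conditions attached to $r$ and $\mu$ in \eqref{params*}, or fail to admit an abelian normal subgroup $\cg(\Gamma)$ of the required order $r$ as a sub-extension (equivalently, the Schur multiplier / extension obstruction rules them out), or are eliminated by the two-arc-orbit condition on $\Gamma$ itself. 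The main obstacle I anticipate is the bookkeeping in this last elimination: the list of $2$-transitive groups is long, and for each socle one must verify that the cover parameters \eqref{params*} cannot be realized — this is where one leans hardest on the divisibility constraints $(3,r)=1$, $r\mid t-1$, and integrality of $\mu$, and on whatever structural restrictions Section \ref{Preliminaries} places on how $\cg(\Gamma)$ can be an abelian normal subgroup of $G$ with $G/\cg(\Gamma)=G^{\Sigma}$ of wreathed type. Once only $T=L_4(q)$ survives, one reads off $t^2-1=\frac{q^4-1}{q-1}$ and the theorem follows.
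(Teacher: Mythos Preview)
Your proposal correctly identifies the overall architecture (rule out affine, then in the wreathed case run through the $2$-transitive socle list), but it misses the actual elimination mechanism, and the tools you suggest in its place are not the ones that work.

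The paper does \emph{not} eliminate the competing socles via divisibility conditions on $r$ and $\mu$, nor via Schur-multiplier or extension obstructions for $\cg(\Gamma)$. Those constraints are far too coarse: for instance, nothing in $(3,r)=1$, $r\mid t-1$, or integrality of $\mu$ prevents $t^2-1=\tfrac{q^d-1}{q-1}$ with $d\ne 4$, and the abelian group $K=\cg(\Gamma)$ of order $r$ sits as a normal subgroup of $G$ with solvable quotient $\aut(K)$, so no Schur-multiplier argument bites on the non-abelian socle $T\times T$. The real lever is Lemma~\ref{Prel:Lemma6}: the two $G_a$-orbits $X_1,X_2$ on $[a]$ have sizes $k_1=2(n_0-1)$ and $k_2=(n_0-1)^2$, and writing $\lambda_i=|[x_i]\cap X_i|$ one gets the identity
\[
2(\lambda-\lambda_1)=(n_0-1)(\lambda-\lambda_2).
\]
Since $\lambda$ is determined by \eqref{params*}, this pins $\lambda_1$ to one of at most two (or four, depending on parity) residues modulo $\tfrac{n_0-1}{(2,n_0-1)}$. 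The second, decisive observation is that $G_a$ contains $A=S\times S$ with $S$ a point stabiliser in $T$, so $X_1$ splits into two $A$-orbits $Y_1,Y_2$ each of length $n_0-1$, and $\lambda_1$ must be a sum of subdegrees of the \emph{two-point stabiliser} of $T$ acting on each $Y_i$. For $T=L_2(q),U_3(q),{}^2G_2(q),Sz(q)$ (i.e.\ $n_0-1=q$ a prime power) those subdegrees are known explicitly and force congruences on $\lambda_1$ incompatible with the ones above; $Sp_{2d}(2)$ is handled by a short parity/size argument reducing to $d\le 6$ and direct check; and for $L_d(q)$, $d\ge 3$, an intricate but purely arithmetic analysis of the same $\lambda_1$-expression (writing it against the orbit lengths $1,\,q-1,\,\tfrac{q^{d-2}-1}{q-1},\,q^{d-2}-1$ of a two-point stabiliser on $PV$) forces $d=4$. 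None of this is accessible through the invariants you name.

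Two smaller points: you should explicitly exclude the affine case (the paper does this in one line: $n=(t^2-1)^2$ is not a prime power for $t\ge 6$), and ``matching the wreathed subdegrees against those forced by \eqref{params*}'' is not a step---\eqref{params*} gives $n,\lambda,\mu$ but imposes no subdegree constraint; the subdegrees come from the product action, and it is $\lambda_1$, not $k_1,k_2$, that carries the arithmetic content.
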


Imprimitive permutation groups $X$ of rank $3$ have a common characterization involving several $2$-transitive actions. Namely, every such group $X$ has a (unique) nontrivial imprimitivity system $\mathcal{B}$ and by the ``Embedding Theorem'' for imprimitive groups, the group   $X$ can be viewed as a subgroup of the wreathed product $H\wr Y$, where, for a block $B\in \mathcal{B}$, $H={X_{\{B\}}}^B$,  and the groups $H$   and $Y\simeq G^{\mathcal{B}}$ are both $2$-transitive. By \cite{DGLPP}, provided that $H$ is an almost simple group, the group $X$, except for a few sporadic subcases, is either quasi-primitive (and known) or full, i.e. the kernel of the action of $X$ on $\mathcal{B}$ contains the $|\mathcal{B}|$-th direct power of the socle of the wreathed component $H$.
(For the case of an affine group $H$, the structure of  groups $X$ may be very complicated and the problem of their classification is far from its solution, see a discussion in \cite{DGLPP}). Using this result together with the classification of $2$-transitive permutation groups, we show the next theorem, which reduces the consideration of the imprimitive case for $G^{\Sigma}$ to    some rather special situations.

\begin{theorem}\label{MainThm:ImprimitiveCase} Suppose that $\Gamma$ is an abelian cover with parameters \eqref{params*}, $\Sigma=\cF(\Gamma)$, $G=\aut(\Gamma)$, and $G^{\Sigma}$ is an imprimitive group of rank $3$ with subdegrees $1$, $k_1$, and $k_2$.
Let $F\in \Sigma$, $\mathcal{B}$ be the unique nontrivial imprimitivity system for $G^{\Sigma}$, $B$ be its block (of size $k_1+1$) containing $F$,
$M=G_{\{F\}}$, $K=\cg(\Gamma)$ and $H={{G_{\{B\}}}}^B$. Then $H$ is an affine group (of degree $k_1+1=p^l$, where $p$ is a prime) and one of the following holds:
\begin{itemize}
\item[$(1)$] $G^{\mathcal{B}}$ is an almost simple group and either
\begin{itemize}
\item[$(i)$] $(\soc(G^{\mathcal{B}}), |\mathcal{B}|) = (L_2(8), 9)$ and $l$ is even, or
\item[$(ii)$] $(\soc(G^{\mathcal{B}}),|\mathcal{B}|)=(L_5(3),121)$,
$t=12$, $r=11$, $p=13$, $l=2$, and $M^B\le GL_2(13)$ or $M^B\le \Gamma L_1(13^2)$;
\end{itemize}
\item[$(2)$]
$G^{\mathcal{B}}$ is an affine group, $t$ is even, $t-1=p_1^{s_1}=p_2^{s_2}-2$ for some  primes $p_1$ and $p_2$, and $\{|B|,|\mathcal{B}|\}=\{(t-1)^2,(t+1)^2\}$.
\end{itemize}
\end{theorem}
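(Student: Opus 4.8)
\emph{Proof idea.} The plan is to combine the Embedding Theorem for imprimitive permutation groups with the structure theorem of \cite{DGLPP}, the classification of finite $2$-transitive groups, and the classification of arc-transitive abelian covers from \cite{Tsi24}, and to read off the outcome from the arithmetic forced by \eqref{params*}: there $r$ is an odd divisor of $t-1$ with $r\geq 2$ and $3\nmid r$, $n=(t^2-1)^2=(t-1)^2(t+1)^2$ with $\gcd(t-1,t+1)\mid 2$, and $\mu r=(t-1)^2(t^2+t-1)$.

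\emph{Set-up.} Since $G^{\Sigma}$ is imprimitive of rank $3$, $\mathcal B$ is its unique nontrivial block system and a point stabiliser in $G^{\Sigma}$ has exactly the two nontrivial orbits $B\setminus\{F\}$ and $\Sigma\setminus B$; hence $|B|=k_1+1$, $|\mathcal B|=n/(k_1+1)$, $H$ is $2$-transitive of degree $k_1+1$, and by the Embedding Theorem $G^{\Sigma}$ embeds in $H\wr Y$ with $Y\cong G^{\mathcal B}$ also $2$-transitive, of degree $|\mathcal B|$. An auxiliary object is the graph $\Gamma_B$ obtained by restricting the covering projection $\Gamma\to K_n$ to the complete subgraph on the fibres lying in $B$: it is an arc-transitive abelian cover of $K_{k_1+1}$ with the same covering group $K$ (still acting regularly on each fibre, so $|\cg(\Gamma_B)|=r$), on which $G_{\{B\}}$ acts arc-transitively. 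By \cite{Tsi24}, $\Gamma_B$ is either one of the \emph{known} arc-transitive abelian covers — a distance-transitive Taylor cover, a Thas--Somma cover, or the collinearity graph of a generalized quadrangle dual to some $\mathrm H(3,q^2)$ with a deleted cyclic spread, or a quotient thereof — or an \emph{unknown} one, and in the latter case $\aut(\Gamma_B)$ is solvable, $\mu_B>1$ and $k_1+1=r\mu_B$. Together with $k_1+1\mid(t^2-1)^2$, this dichotomy is the main arithmetic lever used below.

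\emph{Step 1: $H$ is affine.} Assume $H$ is almost simple, with socle $S=\soc(H)$. Then $\aut(\Gamma_B)^{B}\geq H$ is $2$-transitive and non-solvable, so $\Gamma_B$ lies on the known list; Taylor covers have index $2$, contradicting $r$ odd, so only the Thas--Somma and generalized-quadrangle families remain, whose parameter triples $(k_1+1,r,\mu_B)$ have explicit forms, and I would check that none can satisfy $k_1+1\mid(t^2-1)^2$ simultaneously with $r\mid t-1$, $3\nmid r$. In tandem, \cite{DGLPP} gives that with $H$ almost simple $G^{\Sigma}$ is one of finitely many sporadic configurations, or quasi-primitive (and on a known short list), or ``full'' with $S^{|\mathcal B|}$ in the kernel of $G^{\Sigma}\to Y$; in the sporadic and quasi-primitive cases $n$ is confined to an explicit finite set excluded by $(t-1)^2(t+1)^2$ with $\gcd(t-1,t+1)\mid 2$ and the constraints on $r$, while in the ``full'' case the block orbits of $S^{|\mathcal B|}$ reduce matters to the generalized-quadrangle analysis just mentioned. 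Hence $H$ is affine and $k_1+1=p^{l}$ for a prime $p$.

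\emph{Step 2: determining $G^{\mathcal B}$.} With $H$ affine I split on $G^{\mathcal B}$. If $G^{\mathcal B}$ is almost simple, then $\soc(G^{\mathcal B})$ is a simple $2$-transitive group of degree $|\mathcal B|=(t^2-1)^2/p^{l}$; running through the $2$-transitive classification under $|\mathcal B|\mid(t^2-1)^2$, $r\geq 2$, $r\mid t-1$, $3\nmid r$, and eliminating the surviving degrees by re-applying the $\Gamma_B$-dichotomy, I expect only the degrees $9$ and $121$ to remain, i.e.\ $\soc(G^{\mathcal B})\in\{L_2(8),L_5(3)\}$. For $L_2(8)$ one has $k_1+1=\big((t-1)(t+1)/3\big)^2$, so $l$ is even; for $L_5(3)$ on the $121$ points of $\mathrm{PG}(4,3)$, solving $(t^2-1)^2=121\,p^{l}$ under the constraints on $r$ forces $t=12$ (the value $t=10$ fails because then $r\mid 9$, $3\nmid r$, $r\geq 2$ has no solution), hence $k_1+1=169=13^{2}$, $r=11$, $p=13$, $l=2$. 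In that case $M=G_{\{F\}}\leq G_{\{B\}}$ fixes $F\in B$, so $M^{B}$ lies in a point stabiliser of the $2$-transitive affine group $H\leq A\Gamma L_2(13)$; and since a $(169,11,\mu_B)$-cover matches neither the known list of \cite{Tsi24} nor the equation $k_1+1=r\mu_B$, the group $\aut(\Gamma_B)^{B}$, and hence $H$, is solvable, which places $M^{B}$ inside $GL_2(13)$ or inside $\Gamma L_1(13^2)$; this is $(1)$. If $G^{\mathcal B}$ is affine, then $|\mathcal B|=q^{e}$ and $n=p^{l}q^{e}$ has at most two prime divisors; since $t^2-1$ cannot itself be a prime power under the standing hypotheses (that would force $t\in\{2,3\}$, both with no admissible $r$), we get $p\neq q$, so $\{k_1+1,|\mathcal B|\}$ is the unordered pair of maximal prime-power divisors of $n$; applying the $\Gamma_B$-dichotomy together with $\gcd(t-1,t+1)\mid 2$ forces this pair to equal $\{(t-1)^2,(t+1)^2\}$, so $t-1$ and $t+1$ are prime powers differing by $2$, and for odd $t$ this would give $\{t-1,t+1\}=\{2,4\}$, i.e.\ $t=3$, which is excluded; hence $t$ is even and $t-1=p_1^{s_1}=p_2^{s_2}-2$, which is $(2)$.

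\emph{Main obstacle.} I expect the bulk of the work — and the part most likely to misbehave — to be the exhaustive elimination of the remaining $2$-transitive degrees in both steps: the ``full''/generalized-quadrangle case of Step 1 and the stray prime-power factorisations of $(t^2-1)^2$ in the affine branch of Step 2. Each surviving candidate has to be ruled out by matching it against the parameter lists of the known arc-transitive abelian covers (including their quotient-cover variants from \cite{Tsi24}), against the constraint $k_1+1=r\mu_B$ for the unknown ones, and against feasibility of the small sub-cover parameters, all while tracking the handful of exceptional quasi-primitive configurations permitted by \cite{DGLPP}.
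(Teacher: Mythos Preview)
Your central tool---the ``block cover'' $\Gamma_B$---does not carry the structure you assume. Restricting the projection $\Gamma\to K_n$ to the fibres in $B$ gives an arc-transitive $r$-fold \emph{covering graph} of $K_{k_1+1}$, but this is only a topological covering: to invoke the classification in \cite{Tsi24} you need $\Gamma_B$ to be a distance-regular antipodal cover, i.e.\ an $(k_1+1,r,\mu_B)$-cover for some constant $\mu_B$. That would require every pair of non-adjacent vertices of $\Gamma_B$ lying in distinct fibres to have the same number of common neighbours inside $\overline B$. There is no reason this should hold: $G_a$ is transitive on the $k_1$ neighbours $X_1$ of $a$ in $\overline B$, but nothing forces transitivity on the $(r-1)k_1$ non-neighbours, so the local $\mu$-parameter may vary. (Indeed, even connectedness of $\Gamma_B$ is not automatic.) Since your Steps~1 and~2 both hinge on the ``$\Gamma_B$-dichotomy'', this gap undermines the whole argument.

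The paper's proof takes a completely different route that never leaves the ambient cover. The key structural input is Lemma~\ref{ImprimitiveCase:Lem1}, a divisibility relation between $k_1+1$, $m-1$ and $\lambda-\lambda_i$ coming from the orbital equation $k_1(\lambda-\lambda_1)=k_2(\lambda-\lambda_2)$. Ruling out almost simple $H$ (Theorem~\ref{ImprimitiveCase:Thm1}) uses the ``full'' case of \cite{DGLPP} directly: one finds an involution $h\in T^{m-1}$ fixing $\overline B$ pointwise and all blocks of $\mathcal B$, and then the fixed-point lemma~\ref{Prel:Lemma5} together with the Hoffman clique/coclique bounds forces the contradiction $n<n$. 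For affine $H$ with almost simple $G^{\mathcal B}$ (Theorem~\ref{ImprimitiveCase:Thm2}), the paper runs a long case analysis through Claims~\ref{Statement1}--\ref{Statement7}, the crucial observations being that $N_a$ is intransitive on $\mathcal B\setminus\{B\}$ and that $M^B\simeq G_a/N_a$, $W^{\mathcal B}\simeq G_a/S_a$ share a common quotient---this pairing of the two $2$-transitive actions is what pins down $(L_2(8),9)$ and $(L_5(3),121)$, the latter via the Nagell--Ljunggren equation. The both-affine case (Theorem~\ref{ImprimitiveCase:Thm3}) is dispatched by showing that odd $t$ forces $t+1=2^l$ and $t-1$ twice a Mersenne prime, and then Lemma~\ref{ImprimitiveCase:Lem1} produces a contradictory inequality for $r/x$ with $x=(m-1,k_1)$. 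None of these steps are visible in your sketch; the honest elimination of $2$-transitive degrees you flag as the ``main obstacle'' is indeed the bulk of the work, and it requires the $\lambda$-arithmetic and the $N_a$/$S_a$ interplay rather than properties of $\Gamma_B$.
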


This paper is organized as follows.
Section~\ref{Preliminaries} provides some necessary definitions and auxiliary results.
In Sections~\ref{PrimitiveCase} and \ref{ImprimitiveCase} we prove Theorem~\ref{MainThm:PrimitiveCase} and Theorem~\ref{MainThm:ImprimitiveCase} respectively.

\section{Preliminaries}
\label{Preliminaries}

Throughout this paper, we consider only undirected graphs without loops or multiple edges. For a graph $\Gamma$, we denote the set of its vertices by $\cV(\Gamma)$ and the set of its arcs (i.e., ordered pairs of adjacent vertices) by $\cA(\Gamma)$.

Let $\Gamma$ be an $(n,r,\mu)$-cover. For a vertex $a \in \cV(\Gamma)$, we denote by $\Gamma_i(a) $ the sphere of radius $ i $ centered at $a$ in $\Gamma$ (we also use the notation $[a] = \Gamma_1(a)$ when $\Gamma$ is clear from the context), and by $F(a)$ we denote the fibre containing $a$.
Every two adjacent vertices in $\Gamma$ have a constant number $\lambda$ of common neighbors, which equals $n - (r-1)\mu - 2$. Then, $\Gamma$ has exactly four distinct eigenvalues: $ n-1 > \theta > -1 > \tau$ (see \cite{BCN}), with multiplicities $1, m_{\theta}, n-1, m_{\tau}$, respectively. Here, $ \theta $ and $ \tau $ are the roots of the equation $ x^2 - (\lambda - \mu)x - (n-1) = 0$, and the multiplicities $ m_{\theta} $ and $ m_{\tau} $ can be computed using the formulas:
 $$m_{\theta}=\frac{-\tau(r-1)n}{\tau+\theta}, \quad m_{\tau}=\frac{\theta(r-1)n}{\tau+\theta}.$$

It can be easily seen that $\Gamma$ is antipodal, with diameter   $3$, and the antipodal classes coincide with the cover fibres. According to \cite{GodsilHensel}, quotiening of $\Gamma$ by the orbits of a subgroup of $\mathcal{CG}(\Gamma)$ may yield a cover with new parameters $(n, r', \mu')$. More specifically, for any non-trivial subgroup $N$ of $\mathcal{CG}(\Gamma)$ of order less than $r$, the \emph{quotient cover} $\Gamma^N$, defined as a graph on the set of $N$-orbits where two orbits are adjacent if there is an edge in  $\Gamma$ between them, is a $(n, r/|N|, \mu |N|)$-cover.

Let $G$ be a finite permutation group acting on a set $\Omega$, and let $\Lambda \subseteq \Omega$. The sets $G_{\{\Lambda\}} = \{ g \in G \mid \Lambda^g = \Lambda \}$ and $G_{\Lambda} = \{ g \in G \mid x^g = x \text{ for all } x \in \Lambda \}$ form two subgroups of $G$, called as the \emph{setwise stabilizer} and the \emph{pointwise stabilizer} of $\Lambda$ in $G$, respectively.
If $g \in G_{\{\Lambda\}}$, then $g$ is said to fix $\Lambda$ as a set; in particular, if $g \in G_{\Lambda}$, then $g$ fixes $ \Lambda$ pointwise.
If $\Lambda$ is a $G$-invariant set (i.e., $G = G_{\{\Lambda\}}$), then $G^{\Lambda}$ denotes the permutation group induced by $G$ on $\Lambda$ (so that $G^{\Lambda}\simeq G/G_{\Lambda}$).
The set of all points in $\Omega$ fixed by an element $g \in G$ is denoted by $\fix_{\Omega}(g)$  or simply $\fix(g)$ when it is clear which $\Omega$ is meant. A \emph{$G$-orbital} of $G$ is any orbit in the induced action of $G$ on  $\Omega^2$. If $G$ is transitive, then the number of $G_a$-orbits on $\Omega$ is referred to as the \emph{(permutation) rank} of $G$ and is denoted by $\rk(G)$ (so $\rk(G)$ coincides with the number of $G$-orbitals).
Further, for a natural number $l$, $\pi(l)$ denotes the set of its prime divisors, and for a finite group $X$, $\pi(X)$ refers to its \emph{prime spectrum}, i.e., the set $\pi(|X|)$. For any $p \in \pi(l)$, $(l)_p$ denotes the $p$-part of $l$, which is the largest power of $p$ dividing $l$, while $(l)_{p'}$ denotes the number $l / (l)_p$, called  as the $p'$-part of $l$. Finally,
$(x,y)$ denotes the greatest common divisor of the integers $x$ and $y$.

The remaining general terminology and notation used in this paper are mostly standard and can be found in \cite{Asch,DM, BCN}.

\begin{lemma}\label{Prel:Lemma1} If $\Gamma$ is an $(n, r, \mu)$-cover, $|\cg(\Gamma)|=r$, and $X$ is its transitive group of  automorphisms, then the group $G:= \cg(\Gamma)X$ has exactly $\rk(G^{\Sigma})-1$ orbits on the set of arcs of the cover $\Gamma$, where $\Sigma=\cF(\Gamma)$.
\end{lemma}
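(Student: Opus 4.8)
The plan is to analyze the action of $G = \cg(\Gamma) X$ on $\cA(\Gamma)$ by relating arc-orbits to orbitals of the induced group $G^{\Sigma}$ on the set $\Sigma = \cF(\Gamma)$ of fibres. First I would record the basic structural facts: since $\cg(\Gamma) \trianglelefteq \aut(\Gamma)$ fixes every fibre setwise and $X$ is transitive on $\cV(\Gamma)$, the group $G$ is transitive on $\cV(\Gamma)$ and hence on $\Sigma$; moreover $\cg(\Gamma) \le G_{\{F\}}$ for every $F \in \Sigma$ and, because $|\cg(\Gamma)| = r = |F|$ and $\cg(\Gamma)$ acts semiregularly on $\cV(\Gamma)$ (it acts freely on each fibre by definition of the covering group, each fibre being an $r$-coclique permuted among itself), $\cg(\Gamma)$ is in fact regular on each fibre. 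This last point is the key leverage: it means $\cg(\Gamma)$ already realizes, inside $G$, all "intra-fibre" symmetry, so arc-orbits of $G$ are governed entirely by what happens between fibres.

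Next I would set up the correspondence. Fix a reference vertex $a \in \cV(\Gamma)$ with $F = F(a)$, and note $G$ is arc-transitive on none, one, or more orbits; more precisely the $G$-orbits on $\cA(\Gamma)$ are in bijection with the $G_a$-orbits on $\Gamma_1(a)$. Every neighbour $b$ of $a$ lies in a fibre $F(b) \ne F$ (fibres are cocliques), and since the union of $F$ and $F(b)$ induces a perfect matching, $b$ is the unique neighbour of $a$ in $F(b)$. Thus the map $b \mapsto F(b)$ is a bijection from $\Gamma_1(a)$ onto the set of fibres $F'$ with $F' \sim F$ in the quotient $K_n$ — i.e. onto $\Sigma \setminus \{F\}$, which is a single $G_{\{F\}}^{\Sigma}$-orbit, in fact it is the union of the $\rk(G^{\Sigma}) - 1$ nontrivial $G_{\{F\}}^{\Sigma}$-orbits (the "off-diagonal" suborbits). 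I would then check that this bijection is equivariant for the appropriate subgroups: $G_{\{F\}}$ acts on $\Gamma_1(a)$ via the matching only after we pass to $G_a$, so the comparison I actually need is between $G_a$-orbits on $\Gamma_1(a)$ and $G_{\{F\}}$-orbits on $\Sigma\setminus\{F\}$. Since $\cg(\Gamma) \le G_{\{F\}}$ is transitive on $F$, we have $G_{\{F\}} = \cg(\Gamma)\, G_a$ (a Frattini-type argument: for $g \in G_{\{F\}}$, some $h \in \cg(\Gamma)$ has $a^{gh} = a$). Because $\cg(\Gamma)$ fixes every fibre, it acts trivially on $\Sigma$, so $G_{\{F\}}$ and $G_a$ have the same orbits on $\Sigma\setminus\{F\}$, namely $\rk(G^{\Sigma}) - 1$ of them.

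The final step is to transport these orbits back along the bijection $b \mapsto F(b)$ and conclude that $G_a$ has exactly $\rk(G^{\Sigma})-1$ orbits on $\Gamma_1(a)$, hence $G$ has exactly $\rk(G^{\Sigma})-1$ orbits on $\cA(\Gamma)$. Here I must verify that $b, b' \in \Gamma_1(a)$ lie in the same $G_a$-orbit if and only if $F(b), F(b')$ lie in the same $G_a$-orbit on $\Sigma$. The forward direction is immediate since any $g \in G_a$ with $b^g = b'$ sends $F(b)$ to $F(b')$. For the converse, suppose $F(b)^g = F(b')$ for some $g \in G_a$; then $b^g$ is the unique neighbour of $a$ in $F(b')$, which is $b'$ itself, so $b^g = b'$. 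This uniqueness — a direct consequence of the perfect-matching condition in the definition of an $(n,r,\mu)$-cover — is what makes the correspondence clean, and it is the only place where the cover axioms (beyond $|\cg(\Gamma)| = r$) are used. I anticipate the main obstacle is purely bookkeeping: being careful that "rank of $G^{\Sigma}$" counts the trivial orbital $\{(F,F)\}$ as well, so that the number of arc-orbits is $\rk(G^{\Sigma}) - 1$ and not $\rk(G^{\Sigma})$, and making sure the two Frattini-type reductions ($\cg(\Gamma)$ regular on fibres, $\cg(\Gamma)$ trivial on $\Sigma$) are invoked in the right order. No deep input is needed beyond these observations and the defining properties of the cover.
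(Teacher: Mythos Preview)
Your argument is correct and follows essentially the same route as the paper: both exploit the $G_a$-equivariant bijection $\Gamma_1(a)\to\Sigma\setminus\{F\}$ coming from the perfect-matching axiom, together with the Frattini-type factorisation $G_{\{F\}}=\cg(\Gamma)\,G_a$ (from regularity of $\cg(\Gamma)$ on fibres) to identify $G_a$-orbits on $\Gamma_1(a)$ with the $\rk(G^{\Sigma})-1$ nontrivial suborbits on $\Sigma$. The only slip is the stray phrase ``which is a single $G_{\{F\}}^{\Sigma}$-orbit'' for $\Sigma\setminus\{F\}$, which you immediately and correctly override; otherwise your write-up is a faithful (and somewhat more explicit) version of the paper's proof.
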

\begin{proof}
Let $\Delta_1,\ldots,\Delta_s$ be the complete set of $G$-orbitals whose union gives   $\cA(\Gamma)$. Then for each vertex $a$ there are exactly $s$ arcs-representatives of the form $(a,a_i)\in \Delta_i$. All vertices $a_i$ belong to pairwise distinct fibres. Moreover, if $F(a_i)^g=F(a_j)$ for some $1\le i,j\le s$ and $g\in G_{\{F(a)\}}$, then $\cg(\Gamma)$ contains an element $h$ such that $(a^{gh},(a_i)^{gh})=(a,a_j)$, hence $i=j$. Thus $s\le \rk(G^{\Sigma})-1$. On the other hand, in view of the action of $\cg(\Gamma)$ on $\cV(\Gamma)$, there are at most $s$ non-diagonal $G^{\Sigma}$-orbitals on $\Sigma$, which yields the required equality.
\end{proof}
\medskip

Recall that, by a theorem of Burnside, every finite $2$-transitive group is either almost simple or affine. Their classification (see Theorem \ref{2tra} below), which we give in accordance with \cite[Theorem 2.9]{GLP} (see   also \cite[Theorem 11.2.1]{BM})  will provide a basis for our proofs.

\begin{theorem}\label{2tra} Let $G$ be a finite $2$-transitive
permutation group on a set $X$, $|X|=n$, $a\in X$, $H=G_a$, and $T$ be the socle of $G$.

Then either

\begin{itemize}

\item[$(1)$] $G$ is an almost simple group and for $(T,n)$ one of the following
possibilities holds:
\begin{itemize}

\item[$(i)$] alternating $(\alt_n,n)$, $n\ge 5$;

\item[$(ii)$] linear $(L_m(q),(q^m-1)/(q-1))$, $m\ge 2$ and $(m,q)\notin \{(2,2),(2,3)\}$;

\item[$(iii)$] symplectic $(Sp_{2m}(2),2^{2m-1}\pm 2^{m-1})$, $m\ge 3$;

\item[$(iv)$] unitary $(U_3(q),q^3+1)$, $q\ge 3$;

\item[$(v)$] Ree $(^2G_2(q),q^3+1)$, $q=3^{2a+1}\ge 27$;

\item[$(vi)$] Suzuki $(Sz(q),q^2+1)$, $q=2^{2a+1}\ge 8$;

\item[$(vii)$] Mathieu $(M_n,n)$, $n\in \{11,12,22,23,24\}$;

\item[$(viii)$] sporadic $(L_2(11),11)$, $(M_{11},12)$, $(\alt_7,15)$, $(L_2(8),28)$,
$(HiS,176)$, $(Co_3,276)$,
\end{itemize}
or

\item[$(2)$] $G=TH$, $T$ is an elementary abelian group of order $n=p^m$, where $p$ is a prime,
and one of the following holds:
\begin{itemize}

\item[$(i)$] linear $m=cd$, $d\ge 2$ and $SL_d(p^c)\trianglelefteq H\le \Gamma L_d(p^c)$;

\item[$(ii)$] symplectic $m=cd$, $d$ is even, $d\ge 4$ and
$Sp_d(p^c)\trianglelefteq H\le Z_{p^c-1}\circ \Gamma Sp_d(p^c)$;

\item[$(iii)$] $G_2$-type $m=6c$, $p=2$ and
$G_2(2^c)'\trianglelefteq H\le Z_{2^c-1}\circ {\rm Aut}(G_2(2^c))$;

\item[$(iv)$] one-dimensional $H\le \Gamma L_1(p^m)$;

\item[$(v)$] exceptional
$p^m\in \{9^2,11^2,19^2,29^2,59^2\}$ and $SL_2(5)\trianglelefteq H$, or
$p^m=2^4$ and $\alt_6$ or $\alt_7\trianglelefteq H$, or
$p^m=3^6$ and $SL_2(13)\trianglelefteq H$;

\item[$(vi)$] extraspecial $p^m\in \{5^2,7^2,11^2,23^2\}$ and
$SL_2(3)\trianglelefteq H$ or $p^m=3^4$, $R=D_8\circ Q_8\triangleleft H$,
$H/R\le \sym_5$ and $5$ divides $|H|$.\end{itemize}
\end{itemize}
\end{theorem}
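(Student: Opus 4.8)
The plan is to exploit the Embedding Theorem together with the classification of $2$-transitive groups (Theorem~\ref{2tra}) and the arithmetic constraint $n=|B|\cdot|\mathcal{B}|=(t^2-1)^2$. First I would record the basic numerics of an imprimitive rank-$3$ group: since $G^{\Sigma}$ has subdegrees $1,k_1,k_2$ and a unique nontrivial imprimitivity system $\mathcal{B}$, the orbital of valency $k_1$ is a disjoint union of cliques whose components are the blocks, so $|B|=k_1+1$ and $k_2=(|\mathcal{B}|-1)(k_1+1)$; in particular $|B|\cdot|\mathcal{B}|=|\Sigma|=n=(t^2-1)^2$. Identifying $K=\cg(\Gamma)$ with the kernel of $G\to G^{\Sigma}$ (so $|K|=r$ and $G^{\mathcal{B}}\simeq Y$), the Embedding Theorem realizes $G^{\Sigma}\le H\wr Y$ with $H=(G_{\{B\}})^B$ of degree $|B|$ and $Y=G^{\mathcal{B}}$ of degree $|\mathcal{B}|$ both $2$-transitive. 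These facts reduce the theorem to two tasks: (a) showing $H$ is affine, and (b) determining $|\mathcal{B}|$ and $Y$ from the Diophantine relation $|B|\cdot|\mathcal{B}|=(t^2-1)^2$.

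For step (a) I would argue by contradiction using \cite{DGLPP}: assume $H$ is almost simple with socle $T$. The dichotomy of \cite{DGLPP} then says that, apart from finitely many sporadic configurations, $G^{\Sigma}$ is either quasi-primitive or full. In the quasi-primitive alternative the kernel $N$ of $G^{\Sigma}\to Y$ must be trivial (otherwise $N$ is an intransitive nontrivial normal subgroup stabilizing every block), so $G^{\Sigma}$ occurs in the explicit list supplied by \cite{DGLPP}, which I would match against \eqref{params*} and discard. In the full case $G^{\Sigma}$ contains $T^{|\mathcal{B}|}$ inside this kernel, and I would combine this large nonabelian normal subgroup with the global cover constraints — the value $n=(t^2-1)^2$, the fact that $|B|$ is simultaneously a divisor of $n$ and an almost-simple $2$-transitive degree from Theorem~\ref{2tra}$(1)$, and the integrality of $\mu=(t-1)^2(t^2+t-1)/r$ — to force a contradiction, handling the finitely many sporadic configurations by direct inspection. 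An alternative, more combinatorial route I would keep in reserve is to observe that the fibres lying in a single block span a sub-cover $\Gamma_B$ on which $G_{\{B\}}$ acts arc-transitively (all within-block fibre pairs lie in one $G^{\Sigma}$-orbital), so $\Gamma_B$ is an edge-transitive abelian cover; by the classification of \cite{Tsi24} either $\aut(\Gamma_B)^{\cF(\Gamma_B)}\le\AGmL_1(|B|)$, forcing $H$ affine, or $\Gamma_B$ is a known Taylor, Thas--Somma, or generalized-quadrangle cover, each of which is excluded by the parity and shape of $r$ (odd, dividing $t-1$). Either way $H$ is affine, so $|B|=k_1+1=p^l$ with $p$ prime.

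With $|B|=p^l$ fixed, I turn to $Y=G^{\mathcal{B}}$, a $2$-transitive group of degree $|\mathcal{B}|=(t^2-1)^2/p^l$, and apply Theorem~\ref{2tra}. If $Y$ is almost simple, I run through the degrees in Theorem~\ref{2tra}$(1)$, imposing $(t^2-1)^2=p^l|\mathcal{B}|$ and the divisibility conditions $(3,r)=1$ and $r\mid t-1$ carried by \eqref{params*}. Because $(t^2-1)^2$ is a perfect square, the prime-power factor $p^l$ and the degree $|\mathcal{B}|$ are tightly constrained; the linear degrees $(q^m-1)/(q-1)$ that are compatible reduce to the few Nagell--Ljunggren-type square solutions, and after discarding those that violate the constraints on $r$ and $\mu$ only $|\mathcal{B}|=9$ (with $\soc(Y)\simeq L_2(8)$ and $l$ even) and $|\mathcal{B}|=121$ (with $\soc(Y)\simeq L_5(3)$) survive; in the latter $(t^2-1)^2=p^l\cdot 121$ together with $r\mid t-1$ pins down $t=12$, $r=11$, $p=13$, $l=2$. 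The constraint $M^B\le GL_2(13)$ or $M^B\le\Gamma L_1(13^2)$ then follows because $M=G_{\{F\}}$ fixes the point $F\in B$, so $M^B$ lies in a point stabilizer of the affine $2$-transitive group $H$ of degree $13^2$, whose point stabilizers are governed by the linear and one-dimensional subcases of Theorem~\ref{2tra}$(2)$.

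Finally, if $Y$ is affine then $|\mathcal{B}|=q_0^{m_0}$ is a prime power, so $(t-1)^2(t+1)^2=(t^2-1)^2=p^l q_0^{m_0}$ is a product of exactly two prime powers, and I would split on the parity of $t$. When $t$ is odd, $t-1$ and $t+1$ are both even with $\gcd=2$, and I would show the resulting factorization forces at least three distinct prime divisors (or violates the $2$-transitive degree list together with $r\mid t-1$), ruling this out. When $t$ is even, $t-1$ and $t+1$ are coprime, so each of the coprime squares $(t-1)^2$ and $(t+1)^2$ must be one of the two prime powers; hence $t-1=p_1^{s_1}$ and $t+1=p_2^{s_2}$ with $p_2^{s_2}=p_1^{s_1}+2$, giving $\{|B|,|\mathcal{B}|\}=\{(t-1)^2,(t+1)^2\}$, as claimed. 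The main obstacle I anticipate is step (a): correctly invoking \cite{DGLPP} and eliminating its full and sporadic configurations against the cover arithmetic, since this is where the permutation-group dichotomy and the combinatorial parameters interact most delicately.
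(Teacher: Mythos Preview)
Your proposal does not address the stated theorem at all. Theorem~\ref{2tra} is the classification of finite $2$-transitive permutation groups; the paper does not prove it but merely quotes it from the literature (in accordance with \cite[Theorem 2.9]{GLP} and \cite[Theorem 11.2.1]{BM}) as a tool for later arguments. What you have written is instead a proof sketch for Theorem~\ref{MainThm:ImprimitiveCase}, the result about imprimitive rank-$3$ groups $G^{\Sigma}$ acting on the fibre set of an abelian cover with parameters~\eqref{params*}. You even invoke ``Theorem~\ref{2tra}'' as an ingredient, which makes sense only if you are proving a different statement that uses it.

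If the intended target were Theorem~\ref{MainThm:ImprimitiveCase}, then your outline is broadly aligned with the paper's Section~\ref{ImprimitiveCase}: reduce to $H$ affine via \cite{DGLPP} (the paper's Theorem~\ref{ImprimitiveCase:Thm1}), then split on $G^{\mathcal{B}}$ almost simple versus affine (Theorems~\ref{ImprimitiveCase:Thm2} and~\ref{ImprimitiveCase:Thm3}). However, several of your proposed shortcuts would not work. Your ``alternative combinatorial route'' for step~(a) fails because the subgraph on $\overline{B}$ need not be a distance-regular cover (there is no reason its $\mu$-parameter is constant), so the classification in \cite{Tsi24} does not apply; the paper instead uses a delicate argument with the involution $h\in L_Y$ and Lemma~\ref{Prel:Lemma5}. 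In the almost-simple case for $Y$ you drastically understate the work required: the paper does not simply ``match degrees and discard'' but carries out a lengthy case analysis (Claims~\ref{Statement1}--\ref{Statement7}) comparing the structure of $M^B\simeq G_a/N_a$ against $W^{\mathcal{B}}\simeq G_a/S_a$, exploiting that $N_a$ is intransitive on $\mathcal{B}-\{B\}$; the Nagell--Ljunggren input appears only inside Claim~\ref{Statement4} for the linear case $d\ge 3$ with $l$ even. But as a response to the statement actually given, your proposal is simply off-target.
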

Note that the socle $T$ of an almost simple $2$-transitive group of degree $n$ is also $2$-transitive, with the only exception of the case $(T,n)=(L_2(8),28)$, in which $T$ acts primitively with rank $4$ (e.g. see \cite[Lemma 2.10]{GLP}).
\medskip

The following   number-theoretic results will  be used in the forthcoming arguments.
\begin{lemma}\label{Prel:LemmaNT}
Let $q$ be a prime power, $e\in \{1, -1\}$, and $k,m\in \mathbb{N}$. Then
the following statements hold:
\begin{itemize}

\item[$(1)$] if an odd prime $r$ divides $q-e$, then $(q^m -e^m)_r = (m)_r (q -e)_r$ \emph{(see \cite[Lemma 6(ii)]{Zavarnitsine2004})};
 \item[$(2)$] if $4$ divides $q -e$ or $m$ is odd, then $(q^m - e^m)_2 = (m)_2(q- e)_2$
 \emph{(see \cite[Chapter IX, Lemma 8.1]{HuppertBlackburnPartII})}.
  \item[$(3)$] $(q^k-1, q^m-1) = q^{(k,m)}-1$ \emph{(e.g. see {\cite[Lemma 6(iii)]{Zavarnitsine2004}})};
\end{itemize}

\end{lemma}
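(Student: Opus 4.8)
The three statements are all instances of ``lifting-the-exponent''-type valuation identities together with the classical exponent-gcd formula, so the plan is to reduce each to a valuation computation on the homogeneous factorization $a^m - b^m = (a-b)\sum_{i=0}^{m-1} a^i b^{m-1-i}$ and then induct on the valuation of $m$. Throughout I write $v_p(\cdot)$ for the $p$-adic valuation, so that $(x)_p = p^{v_p(x)}$, and I prove (1) and (2) in the more symmetric form: for integers $a,b$ coprime to $p$ with $p \mid a-b$, one has $v_p(a^m - b^m) = v_p(a-b) + v_p(m)$, subject to the stated extra hypothesis when $p=2$. The lemma then follows by setting $a=q$, $b=e$, since $p \mid q-e$ forces $p \nmid q$ (as $p \nmid e = \pm 1$) and $a^m - b^m = q^m - e^m$.

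For part (1) with odd $r$, I first establish the base step $v_r(a^m - b^m) = v_r(a-b)$ whenever $r \nmid m$: modulo $r$ the cofactor $\sum_i a^i b^{m-1-i}$ is congruent to $m\,b^{m-1}$, a unit. Next I treat $m=r$: writing $a-b = r^s u$ with $r \nmid u$ and expanding $a^i = (b + r^s u)^i \equiv b^i + i\,b^{i-1} r^s u \pmod{r^{2s}}$, the cofactor becomes $\equiv r\,b^{r-1} + \binom{r}{2} b^{r-2} r^s u \pmod{r^{2s}}$; since $r$ is odd, $r \mid \binom{r}{2}$, so the second summand has valuation at least $s+1 \geq 2$ while the first has valuation exactly $1$, giving $v_r(a^r - b^r) = v_r(a-b)+1$. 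Iterating this through the base change $a \mapsto a^{r^{j}}$ handles $m = r^c$, and combining it with the base step via $m = r^c m'$ (with $r \nmid m'$) yields the full identity. The role of the oddness of $r$ is visible precisely at the $\binom{r}{2}$ step, which fails for $r=2$.

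Part (2), the prime $2$, is where care is required, and I expect it to be the main obstacle, since the unconditional analogue is false. I split on the parity of $m$. If $m$ is odd, the cofactor is a sum of $m$ odd terms, hence odd, so $v_2(a^m - b^m) = v_2(a-b) = v_2(a-b) + v_2(m)$ with no hypothesis on $a-b$; this also disposes of the subcase where $q$ is even, since then $q^m - e^m$ is odd and all three quantities equal $1$. If $m$ is even I invoke $4 \mid a-b$: from $a \equiv b \pmod 4$ with $a,b$ odd one gets $a+b \equiv 2 \pmod 4$, so $v_2(a+b)=1$, whence $v_2(a^2 - b^2) = v_2(a-b) + v_2(a+b) = v_2(a-b)+1$; an induction on $s = v_2(m)$, using that $4 \mid a-b$ forces $4 \mid a^{m'} - b^{m'}$ for the odd part $m'$ and repeatedly factoring $x^2 - y^2$, then gives $v_2(a^m - b^m) = v_2(a-b) + v_2(m)$. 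The hypothesis $4 \mid a-b$ is exactly what pins $v_2(a+b)$ at $1$ at every squaring step, which is why omitting it breaks the identity.

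Finally, for part (3) I run a Euclidean-algorithm argument on the exponents. For $k \geq m$ the division $q^k - 1 = q^{k-m}(q^m - 1) + (q^{k-m} - 1)$ yields $\gcd(q^k - 1, q^m - 1) = \gcd(q^{k-m} - 1, q^m - 1)$, and more compactly $\gcd(q^k - 1, q^m - 1) = \gcd(q^{k \bmod m} - 1, q^m - 1)$. Iterating this mirrors the Euclidean algorithm on the pair $(k,m)$ and terminates at $(\,(k,m),0\,)$, that is at $\gcd(q^{(k,m)} - 1, q^0 - 1) = q^{(k,m)} - 1$; an induction on $\max(k,m)$ makes this rigorous. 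This part is independent of (1) and (2) and is the shortest, so it could equally be presented first.
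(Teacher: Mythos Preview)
Your proof is correct. The paper itself does not prove this lemma at all: it simply records the three statements with pointers to the literature (Zavarnitsine for parts~(1) and~(3), Huppert--Blackburn for part~(2)) and moves on. You instead give a self-contained argument, essentially the standard ``lifting the exponent'' computation for parts~(1) and~(2) and the Euclidean-algorithm-on-exponents proof for part~(3). Each step checks out, including the delicate point in part~(2) where you isolate why the hypothesis $4\mid q-e$ is needed exactly to force $v_2(a+b)=1$ at every squaring, and your separate handling of even $q$ (where the hypothesis forces $m$ odd and all three $2$-parts are~$1$). So where the paper defers to references, you supply the underlying elementary proof; nothing is missing or wrong.
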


\begin{lemma}[A Corollary of Zsigmondy's Theorem \cite{Zsigmondy}]\label{Prel:LemmaZsigmCor} If $p$ and $q$ are prime numbers, $1\le m,n\in \mathbb{N}$ and
$$p^m=q^n+1,$$ then one of the following assertions holds:
\begin{itemize}
\item[$(1)$] $q=2, p=3, n=3, m=2$;

\item[$(2)$] $q=2, m=1$, $n$ is a power of $2$ and $p=q^n+1$ is a Fermat prime;

\item[$(3)$] $p=2, n=1$ and $q=p^m-1$ is a Mersenne prime, in particular, $m$ is prime.
\end{itemize}
\end{lemma}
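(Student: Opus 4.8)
The plan is to rewrite the equation as $p^m-1=q^n$ and exploit the fact that the right-hand side is a prime power, so $p^m-1$ can have at most one prime divisor, namely $q$. A parity observation splits the analysis into two mutually exclusive situations: since $p^m=q^n+1$, the primes $p$ and $q$ cannot both be odd, because if $p$ is odd then $q^n=p^m-1$ is even (forcing $q=2$), while if $p=2$ then $q^n=2^m-1$ is odd (forcing $q$ odd). Hence it suffices to treat the cases $p=2$ (with $q$ odd) and $q=2$ (with $p$ odd) separately.

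First I would dispose of $p=2$, where the equation reads $q^n+1=2^m$ with $q$ an odd prime. If $n$ were even, then $q^n$ is an odd square, so $q^n\equiv 1\pmod 4$ and $q^n+1\equiv 2\pmod 4$, whence $m=1$ and $q^n=1$, which is impossible; so $n$ is odd. Writing $q^n+1=(q+1)(q^{n-1}-q^{n-2}+\cdots+1)$, the second factor is a sum of $n$ (an odd number of) odd terms, hence odd; since it divides the power of two $2^m$, it must equal $1$, which forces $n=1$. Thus $q=2^m-1$ is a Mersenne prime, and the standard factorization of $2^{ab}-1$ shows $m$ must be prime. This is conclusion $(3)$.

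Next I would treat $q=2$, where $p$ is odd and $p^m-1=2^n$. If $m=1$, then $p=2^n+1$ is prime; the elementary fact that $2^e+1$ divides $2^n+1$ whenever $e\mid n$ with $n/e$ odd shows $n$ can have no odd factor exceeding $1$, so $n$ is a power of $2$ and $p$ is a Fermat prime, giving conclusion $(2)$. For $m\ge 2$ I would invoke Zsigmondy's theorem for $p^m-1$: it guarantees a primitive prime divisor of $p^m-1$ unless $(p,m)=(2,6)$ or $m=2$ with $p+1$ a power of $2$. Since the only prime dividing $p^m-1=2^n$ is $2$, any primitive prime divisor would be $2$; but $p$ odd gives $2\mid p-1=p^1-1$ with $1<m$, contradicting primitivity. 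Hence no primitive prime divisor exists, and as $p$ is odd the exception $(p,m)=(2,6)$ is ruled out, leaving $m=2$ and $p+1=2^a$. Then $p-1=2^a-2=2(2^{a-1}-1)$ must also be a power of $2$, and since $2^{a-1}-1$ is odd this forces $2^{a-1}-1=1$, i.e. $a=2$, $p=3$, and $n=3$. This is conclusion $(1)$.

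The main obstacle is the case $q=2$, $m\ge 2$: here the elementary parity and factorization tricks that settle the other cases are not decisive, and one genuinely needs the full strength of Zsigmondy's theorem, including the precise list of its exceptional triples, to rule out all but the single sporadic solution $3^2=2^3+1$. The residual work is purely bookkeeping---checking that the ``power of two'' conditions on $p-1$ and $p+1$ hold simultaneously only for $p=3$---which reduces to the observation that two powers of $2$ differing by $2$ must be $2$ and $4$.
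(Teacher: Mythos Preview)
Your proof is correct. The paper itself does not give a proof of this lemma: it is stated as a known corollary of Zsigmondy's theorem with a bare citation to \cite{Zsigmondy}, and no argument is supplied. Your write-up therefore goes beyond the paper by actually filling in the details. The route you take---splitting on the parity of $p$ and $q$, disposing of $p=2$ and of $q=2,\ m=1$ by elementary factorizations, and invoking Zsigmondy's theorem only for the remaining case $q=2,\ m\ge 2$ to force $m=2$ and then $p=3$---is the standard derivation and matches what the paper's citation implicitly appeals to.
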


Throughout the rest of the paper we make and use the following assumptions and notations:
\begin{itemize}
\item $\Gamma$ is an abelian cover with parameters \eqref{params*}, that is,
\begin{equation*} 
(n,r,\mu)=\big((t^2-1)^2,r,(t^2+t-1)(t-1)\frac{(t-1)}{r}\big),
\end{equation*} where $t$ is a positive integer and $r$ is a divisor of $t-1$ such that $(6,r)=1$, and eigenvalues
$$
k=t^2(t^2-2),\quad
\theta=(t^2-2)t,\quad
-1,\quad
\tau=-t
$$
of respective multiplicities
$$
1,\quad
m_{\theta}=(t^2-1)(r-1),\quad
k,\quad
m_{\tau}=(t^2-2)(t^2-1)(r-1),
$$
and
 \item $\Sigma=\cF(\Gamma)$ (so $n=|\Sigma|$), $v=nr$,
 $a\in F\in \Sigma$,
 \item $K=\cg(\Gamma)\le G\le \aut(\Gamma)$, $G$ is transitive on $\cV(\Gamma)$,
 $M=G_{\{F\}}$ and $C=G_F$,
 \item $\alpha_i(g)=|\{x\in \cV(\Gamma){:}\ \partial(x,x^g)=i\}|$ for
an element $g\in G$.
\end{itemize}
\medskip

Moreover, $$\mu=(t^3-2t+1)\frac{(t-1)}{r},$$
$$\lambda=(t^3-2t+1)\frac{(t-1)}{r}+t(t^2-3),$$ and in view of the Hoffman bounds for cliques and cocliques (see \cite[Propositions 4.4.6, 3.7.2]{BCN})
\begin{equation} \label{HofmanClique} |Q|\le 1+(t^2-2)t= \frac{r\mu}{t-1}
\end{equation}
for any clique $Q$
of the graph $\Gamma$, and
\begin{equation} \label{HofmanCoclique} |C|\le \frac{r(t^2-1)^2}{1+(t^2-2)t}=\frac{rn}{r\mu/(t-1)}=\frac{n(t-1)}{\mu}
\end{equation}
for any coclique $C$ of the graph $\Gamma$.

\smallskip

Next we list a number of auxiliary lemmas on  basic properties of the pair $(\Gamma,G)$.

\begin{lemma}\label{Prel:Lemma2}

The class of covers $\Gamma$ with parameters \eqref{params*}
is closed under taking the quotient covers $\Gamma^U$ obtained by factorizing by subgroups $1<U<K$.
\end{lemma}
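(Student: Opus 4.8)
The plan is to verify directly that quotienting a cover with parameters \eqref{params*} by a subgroup $1<U<K=\cg(\Gamma)$ produces another cover whose parameters again have the shape \eqref{params*}, with $t$ unchanged and $r$ replaced by $r/|U|$. First I would invoke the general fact recalled in Section \ref{Preliminaries} (from \cite{GodsilHensel}): for any non-trivial $U<\mathcal{CG}(\Gamma)$ with $|U|<r$, the quotient $\Gamma^U$ is an $(n,r/|U|,\mu|U|)$-cover. So the index $n=(t^2-1)^2$ is preserved, and it remains only to check that the new triple $(n,r/|U|,\mu|U|)$ still satisfies all the defining constraints of \eqref{params*}: namely that $r':=r/|U|$ divides $t-1$, that $(6,r')=1$, and that $\mu':=\mu|U|$ equals $(t^2+t-1)(t-1)^2/r'$.

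The key computation is immediate: since $r'=r/|U|$ divides $r$ which divides $t-1$, we get $r'\mid t-1$; since $r'\mid r$ and $(6,r)=1$, we get $(6,r')=1$; and $\mu|U|=(t^2+t-1)(t-1)^2/r\cdot|U|=(t^2+t-1)(t-1)^2/(r/|U|)=(t^2+t-1)(t-1)^2/r'$, which is exactly the $\mu$-value prescribed by \eqref{params*} for the pair $(t,r')$. Thus $\Gamma^U$ has parameters of the form \eqref{params*} with the same $t$ and with $r$ replaced by $r'$. One should also note $r'\ge 2$: this is precisely the hypothesis $|U|<r=|K|$, which guarantees that $\Gamma^U$ is still a genuine (non-complete) cover rather than $K_n$ itself; and $\Gamma^U$ remains abelian since its covering group is $K/U$, still abelian and still regular on fibres (a standard consequence of \cite[Lemma 3.1]{GodsilHensel} and the fact that $|K|=r$).

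I do not expect a serious obstacle here — the statement is essentially bookkeeping on the parameter formulas combined with the quotient-cover construction of Godsil and Hensel. The only point requiring a little care is to confirm that the quotient is taken by a subgroup \emph{of the covering group} (so that the Godsil–Hensel machinery applies and the resulting graph is again an antipodal cover of $K_n$ with the fibres being the $U$-orbits), and that the abelian property descends to $\Gamma^U$; both are built into the hypothesis $1<U<K$ together with the structural description of covers in the excerpt. Hence the class described by \eqref{params*} is closed under these quotient operations, as claimed.
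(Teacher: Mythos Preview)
Your proposal is correct and follows essentially the same approach as the paper: both invoke the Godsil--Hensel quotient-cover result to see that $\Gamma^U$ is an $((t^2-1)^2,r',\mu')$-cover with $r'=|K:U|$ and $\mu'=(t^2+t-1)(t-1)^2/r'$, which is again of the form \eqref{params*}. Your version simply spells out the divisibility and coprimality checks (and the abelian property of $\Gamma^U$) more explicitly than the paper's one-line proof.
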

\begin{proof}
It is enough to note that for any subgroup $1<U<K$ the graph $\Gamma^U$ is a $((t^2-1)^2,r',(t^3-2t+1)(t-1)/r'))$-cover, where $|K:U|=r'$, which follows from \cite{GodsilHensel}.
\end{proof}

\begin{lemma}[see {\cite[Lemma 1]{Tsi24}}]\label{Prel:Lemma3}The following statements are true:
 \begin{enumerate}

\item $C=C_G(K)\cap G_a$  and  $M=K:G_a$;
\item  $|G:M|=(t^2-1)^2$ and $|G:G_a|$ divides $(t^2-1)^2(t-1)$;
\item $\pi(G)=\pi(nr)\cup \pi(G_a)\subseteq \pi\big(n!\cdot(t-1)\big)$;

\item $|\fix(G_a)|=|N_G(G_a):G_a|$ divides $nr$, and $|\fixs(M)|=|N_G(M):M|$ divides $n$;
\item if $p\in \pi(G)$ and $p>t+1$, then $p\in \pi(G_F)$;
\item $G/C_G(K)\le \aut(K)$ and if $r$ is prime, then $G/C_G(K)\le Z_{r-1}$ and every element $g\in G$ of prime order $p\not\in \pi(r-1)$ either has no fixed points or $\fix(g)$ is the union of some fibres;
\item if $g$ is an element of prime order $p$ in $G$, $p>r$ and $p\not\in \pi((t^2-1)(t^2-2)t)$, then $r_3=r-1$, $r_2r_1>0$, and $|\Omega|\ge r(1+r_1)+r\cdot z$ for some non-negative integer $z$ which is a  multiple of $p$, where $r_i=|\Gamma_i(a)| \pmod p$, $i=1,2,3$.

 \end{enumerate}
\end{lemma}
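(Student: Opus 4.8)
The plan is to derive each item in turn from the structural description $M = K{:}G_a$, $K = \cg(\Gamma)$ acting regularly on each fibre, and $|K| = r$, together with the parametrization \eqref{params*}. First I would establish (1): since $K$ acts regularly on the fibre $F$ and fixes $F$ setwise, the stabilizer $M = G_{\{F\}}$ contains $K$ as a normal subgroup (normality because $G^{\Sigma}$ permutes fibres and $K$ is the kernel of $G \to G^\Sigma$ on each fibre, being $\cg(\Gamma)\cap M$); the complement $G_a$ to $K$ in $M$ follows from regularity of $K$ on $F$ via a Frattini-type argument ($M = K\cdot M_a = K{:}G_a$ since $K\cap G_a = 1$). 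For $C = G_F$ (pointwise fibre stabilizer): an element fixing $F$ pointwise fixes $a$, so $C \le G_a$; it also centralizes $K$ because $K$ is regular on $F$ and an element normalizing $K$ and fixing a point of the regular $K$-set must centralize $K$; conversely $C_G(K)\cap G_a$ fixes $a$ and commutes with the regular action, hence fixes all of $a^K = F$. This gives (1).

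Item (2) is immediate: $|G:M| = |\Sigma| = n = (t^2-1)^2$ by transitivity of $G$ on fibres (itself a consequence of vertex-transitivity), and $|M:G_a| = |K| = r$ divides $t-1$, so $|G:G_a| = nr$ divides $(t^2-1)^2(t-1)$. Item (3) follows by combining (2) with the orbit–stabilizer relation $\pi(G) = \pi(|G:G_a|)\cup\pi(G_a) = \pi(nr)\cup\pi(G_a)$, and then bounding $\pi(G_a)$: $G_a$ embeds (via its action on $\cV(\Gamma)\setminus F$ of size $n(r-1)$, or on $\Gamma_1(a)$) into a symmetric group on at most $n$ points, giving $\pi(G_a)\subseteq \pi(n!)$, while $\pi(nr)\subseteq\pi(n!\cdot(t-1))$ since $r\mid t-1$. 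For (4), $\fix(G_a)$ is acted on by $N_G(G_a)$ with $G_a$ in the kernel, and the standard fact that $|\fix(G_a)|=|N_G(G_a):G_a|$ for the point stabilizer of a transitive action applies; this index divides $|G:G_a| = nr$. The analogous statement for $\fixs(M)$ on $\Sigma$ gives divisibility of $|N_G(M):M|$ by $|G:M| = n$ — here $\fixs$ denotes fixed fibres.

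Items (5)–(7) are the substantive part and where I expect the main obstacle. For (5): a prime $p > t+1$ dividing $|G|$ must divide $|G_a|$ or $|K|=r \le t-1$; but $p > t+1 > r$ rules out the latter, and if $p \mid |G_a|$ but $p\nmid |G_F|=|C|$, one analyses a Sylow $p$-subgroup $P\le G_a$ and its action on the $r-1$ fibres $\ne F$ meeting $\Gamma_1(a)$ — no, rather on the fibre structure — to force a contradiction with $p > t+1$ via the clique/coclique bounds \eqref{HofmanClique}, \eqref{HofmanCoclique} applied to $\fix(P)$. The cleanest route is probably to cite \cite[Lemma 1]{Tsi24} directly, as the lemma statement already does, but if a self-contained argument is wanted, one counts $|\fix(g)|$ for $g$ of order $p$ using $\alpha_i(g)$ and the constraint that each fixed point drags along its whole fibre when $p\nmid\pi(r-1)$. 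For (6): $K$ abelian of order $r$ gives $G/C_G(K)\le \aut(K)$, and $r$ prime forces $\aut(K)=Z_{r-1}$; then $g$ of order $p\notin\pi(r-1)$ centralizes $K$, so $\langle g\rangle$ acts on the set of $K$-orbits (= fibres), and $\fix(g)$ is $K$-invariant hence a union of fibres. For (7): with $p > r$ and $p$ coprime to $(t^2-1)(t^2-2)t$ (the product of the nontrivial eigenvalue-related quantities, equivalently to $k$ and $m_\tau$ and the valency data), a Sylow argument on the action of $g$ on $\Gamma_i(a)$ — whose sizes are $1, k, (r-1)k, r-1$ modulo structure — forces the residues $r_i = |\Gamma_i(a)| \bmod p$ to satisfy $r_3 = r-1$ and the displayed congruence $|\fix(g)| = |\Omega| \ge r(1+r_1) + rz$, since $\fix(g)$ is a union of fibres (by (6)) and each antipodal class contributes multiples of $r$. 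The main difficulty throughout (5)–(7) is bookkeeping the interaction between the $K$-action (forcing fibre-unions) and the $p$-local action on the distance partition; I would handle it by systematically reducing everything modulo $p$ and invoking \eqref{HofmanClique}–\eqref{HofmanCoclique} to eliminate impossible residue patterns, deferring to \cite{Tsi24} for the finer case analysis.
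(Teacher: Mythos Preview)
The paper does not actually prove this lemma: it is quoted verbatim from \cite[Lemma~1]{Tsi24} and no argument is supplied here, so there is no ``paper's own proof'' to compare against. What you have written is therefore a self-contained attempt at the cited result.

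Your treatment of items (1)--(4) and (6) is essentially correct and standard. One small correction in (1): normality of $K$ in $M$ (indeed in $G$) is simply because $K=\cg(\Gamma)$ is by definition the kernel of $G\to G^{\Sigma}$, not a more delicate fibrewise argument. For item (5) you overcomplicate things: once you know $p\nmid nr$ (because every prime divisor of $n=(t-1)^2(t+1)^2$ is at most $t+1$ and $r\mid t-1$), (3) gives $p\in\pi(G_a)$; then observe that $G_a/G_F$ acts faithfully on $F\setminus\{a\}$, a set of size $r-1\le t-2$, so every prime in $\pi(G_a/G_F)$ is at most $r-1<p$, forcing $p\in\pi(G_F)$. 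No Hoffman bounds or fixed-point analysis are needed here.

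Item (7) is where your proposal has a genuine gap. You correctly identify that by (6) (applicable since $p>r>r-1$) the set $\fix(g)$ is a union of fibres, and that one should reduce the sphere sizes $|\Gamma_i(a)|$ modulo $p$. But the actual content of (7) --- that $r_3=r-1$, that $r_1r_2>0$, and the lower bound $|\Omega|\ge r(1+r_1)+rz$ with $p\mid z$ --- requires a specific counting: one must use that $|\Gamma_3(a)|=r-1<p$ (giving $r_3=r-1$ exactly), that $|\Gamma_1(a)|=k=t^2(t^2-2)$ and $|\Gamma_2(a)|=(r-1)k$ are both nonzero modulo $p$ by the coprimality hypothesis (giving $r_1r_2>0$), and then analyse how the $\langle g\rangle$-orbits distribute across the distance classes from a fixed vertex $a\in\Omega$ to obtain the displayed inequality. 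Your sketch gestures at ``reducing everything modulo $p$'' but does not carry out this orbit-counting, and the Hoffman bounds \eqref{HofmanClique}--\eqref{HofmanCoclique} you propose to invoke play no role here. Since this item is the least routine, you would need to supply the explicit argument rather than defer it.
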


\begin{lemma}[see {\cite[Lemma 2]{Tsi24}}]\label{Prel:Lemma4} Let $p\in \pi(G)$. If $p> t-1$ and $p$ does not divide $t+1$, then $$\max\{p,|\fixs(g)|\}< \mu;$$ in particular,
if $p>\mu/2$, then for any element $g\in G$ of order $p$, the subgraph $\fix(g)$ is an abelian $(|\fixs(g)|,r,\mu')$-cover with $\mu'\equiv \mu\pmod p$,  $\mu>|\fixs(g)|> (r-1)\mu'$ and $\pi(r)\subseteq \pi(|\fixs(g)|)$.

\end{lemma}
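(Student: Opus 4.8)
The plan is to fix an element $g\in G$ of prime order $p$ and study the subgraph $\fix(g)$ together with the set $\fixs(g)$ of fibres it fixes. The starting observation is that $p>t-1\geq r=|K|$ and $K=\cg(\Gamma)\trianglelefteq G$, so $\langle g\rangle\cap K=1$; in particular $g$ acts nontrivially on $\Sigma$, and any fibre fixed set-wise by $g$ is fixed point-wise, because an element of order $p$ permuting the $r<p$ points of a fibre must act trivially on it. Hence $\fix(g)$ is precisely the union of the $f:=|\fixs(g)|$ fixed fibres, inside which every vertex has exactly $f-1$ neighbours (its matching partners in the other fixed fibres), and $\alpha_3(g)=0$, since no vertex can be sent by $g$ to another vertex of its own fibre. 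Counting the $r(n-f)$ non-fixed vertices in $\langle g\rangle$-orbits of size $p$ and using $(p,r)=1$ gives $f\equiv n\pmod p$; as $p>t-1$ and $p\nmid t+1$ one has $p\nmid n=(t-1)^2(t+1)^2$, so $f\geq1$, while $g\neq1$ on $\Sigma$ forces $f\leq n-p$.

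To prove $p<\mu$: if $f=1$, then fixing the vertex $a$ of the unique fixed fibre, $g$ permutes its $n-1$ neighbours without fixed points, so $p\mid n-1=t^2(t^2-2)$ and therefore $p\leq t^2-2<t^3-2t+1\leq\mu$. If $f\geq2$ and $p\geq\mu$, then since $\mu$ is composite for all admissible parameters we may assume $p>\mu$; for any two $g$-fixed vertices $x,y$ at distance $2$ the number of their common neighbours lying in $\fix(g)$ is congruent to $\mu$ modulo $p$ and lies in $[0,\mu]$, hence equals $\mu$, so all $\mu$ common neighbours of every such pair are $g$-fixed. This forces $f\geq\mu+2$ (the $\mu$ common neighbours occupy $\mu$ distinct fixed fibres different from those of $x$ and $y$), and at the same time shows that if we pick a fibre $F_0\not\subseteq\fix(g)$ and $w\in F_0$, then the $f$ matching partners of $w$ in the fixed fibres are pairwise adjacent — a common non-fixed neighbour of two of them would contradict the previous sentence — so together with $w$ they form a clique of size $f+1$; by the Hoffman bound \eqref{HofmanClique} this gives $f+1\leq1+\theta=1+t^3-2t$, contradicting $f\geq\mu+2\geq t^3-2t+3$. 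Hence $p<\mu$.

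For $f<\mu$ one argues in the same spirit: take $w\notin\fix(g)$ (possible since $f\leq n-p<n$); for each $j$ the $f$ matching partners of $w$ in the fixed fibres are common neighbours of $w$ and $w^{g^{j}}$ (apply $g^{j}$ to "$\phi\sim w$" and use that $\phi$ is fixed), and $\partial(w,w^{g^{j}})\in\{1,2\}$ because $\alpha_3(g)=0$; whenever this distance equals $2$ we obtain $f\leq\mu$. The remaining possibility, that $\partial(w,w^{g^{j}})=1$ for every $j$ — equivalently, that $\langle g\rangle w$ is a $p$-clique — is excluded by combining the clique bound with $p<\mu$ and the explicit values in \eqref{params*}, and the borderline case $f=\mu$ is ruled out using the congruence $f\equiv n\pmod p$ together with the parameter arithmetic. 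This is the step I expect to be the main obstacle: getting a bound as sharp as $f<\mu$ (rather than the weaker $f\leq\lambda$ that falls out immediately) requires controlling those vertices that $g$ moves only along edges, so the Hoffman inequalities, the modular identities and the parameter relations have to be balanced against one another with some care.

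Finally, for the \emph{in particular} part assume $p>\mu/2$. Then $f=1$ is impossible, since $p\mid n-1$ would give $p\leq t^2-2<\mu/2<p$; so $f\geq2$. For any two vertices $x,y$ of $\fix(g)$ lying in different fibres and not adjacent (hence at distance $2$ in $\Gamma$), the number $\mu^{\circ}$ of their common neighbours inside $\fix(g)$ lies in $[0,\min(\mu,f-2)]\subseteq[0,\mu)$ and satisfies $\mu^{\circ}\equiv\mu\pmod p$; as $\mu/2<p<\mu$ this pins $\mu^{\circ}=\mu-p=:\mu'$, independent of $x,y$, with $\mu'\geq1$ and $\mu'\equiv\mu\pmod p$. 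Thus $\fix(g)$ is a connected $(f,r,\mu')$-cover, and it is abelian because $K$ preserves $\fix(g)$ (a union of fibres), acts faithfully and regularly on each of its fibres, and so induces on $\fix(g)$ an abelian group of order $r$ equal to $\cg(\fix(g))$. The inequality $\mu>f$ is the first assertion; $f>(r-1)\mu'$ holds because in the cover $\fix(g)$ the number $f-(r-1)\mu'-2$ of common neighbours of two adjacent vertices is non-negative; and $\pi(r)\subseteq\pi(f)$ follows from the integrality of the eigenvalue multiplicities of the abelian cover $\fix(g)$, which forces every prime divisor of the order $r$ of its covering group to divide the number $f$ of its fibres.
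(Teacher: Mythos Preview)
The paper does not prove this lemma; it is quoted verbatim from \cite[Lemma~2]{Tsi24}, so there is no in-paper argument to compare with. I therefore comment only on the soundness of your sketch.

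Your treatment of the preliminary facts (that $g$ centralises $K$ because $p>t-1\geq r$ forces $p\nmid|\aut(K)|$, that any $g$-fixed fibre is fixed pointwise, that $\alpha_3(g)=0$, and that $f\equiv n\pmod p$ with $f\geq1$) is correct, and your proof of $p<\mu$ is clean: the clique $\{w\}\cup\{\phi_{F'}:F'\in\fixs(g)\}$ combined with the Hoffman bound~\eqref{HofmanClique} gives exactly the contradiction $f\leq t^3-2t<\mu+2\leq f$.

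There are, however, two genuine gaps.

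\emph{The strict inequality $f<\mu$.} You yourself flag this as the main obstacle, and indeed your outline does not close it. From a non-fixed vertex $w$ with $\partial(w,w^{g^j})=2$ you obtain only $f\leq\mu$; you then propose to exclude the ``all orbits are $p$-cliques'' case via the clique bound and to kill the boundary $f=\mu$ via $f\equiv n\pmod p$. Neither step goes through as written. The clique bound gives $p\leq 1+\theta=t^3-2t+1$, which is exactly $r\mu/(t-1)\leq\mu$ and is therefore no stronger than the $p<\mu$ you already have; it does not by itself rule out orbits that are $p$-cliques. And the congruence argument for $f=\mu$ would require $p\mid n-\mu=(t-1)^2\bigl[(r-1)t^2+(2r-1)t+r+1\bigr]/r$, a condition which is not visibly incompatible with $t-1<p<\mu$. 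A different mechanism (for instance a Higman-type trace computation on the eigenspaces of $\Gamma$, using $\alpha_2(g)=\alpha_3(g)=0$) is needed here; the Hoffman bounds alone do not suffice.

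\emph{The inclusion $\pi(r)\subseteq\pi(f)$.} Your justification --- integrality of the eigenvalue multiplicities of the abelian cover $\fix(g)$ --- is not valid. For an abelian $(n_0,r_0,\mu_0)$-cover the per-character block multiplicities are $a=-n_0\tau'/(\theta'-\tau')$ and $n_0-a$, and their integrality imposes a divisibility condition on $n_0$ only through $\theta'-\tau'$, not through $r_0$; indeed the hexagon is an abelian $(3,2,1)$-cover with $2\nmid 3$. So ``every prime divisor of $r$ divides $f$'' does not follow from the cover axioms alone. Whatever argument is used in \cite{Tsi24} must exploit the specific arithmetic of the situation (e.g.\ the relations $\mu'=\mu-p$, $f\equiv n\pmod p$, and $r\mid t-1$) rather than generic multiplicity integrality.

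The remaining pieces of the ``in particular'' clause --- that $f\geq2$ when $p>\mu/2$, that $\mu'=\mu-p$ is forced and positive, that $\fix(g)$ is a connected cover with $K$ as its covering group, and that $f>(r-1)\mu'$ from $\lambda'\geq0$ --- are handled correctly.
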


\begin{lemma}[see {\cite[Lemma 3]{Tsi24}}]\label{Prel:Lemma5} Suppose $g$ is an involution of   $G$ and $\Omega=\fix(g)\ne\varnothing$. Let $f=|\Omega\cap F(z)|$ for $z\in \Omega$, $l=|\fixs(g)|$, and $X=\{x\in \Gamma\setminus\Omega |\ [x]\cap \Omega\ne \varnothing\}$. Then $\Omega$ is a regular graph of degree $l-1$ on $lf$ vertices and the following statements hold:
\begin{itemize}
\item[$(1)$] if $l=1$, then $t$ is even, $\alpha_3(g)=0$, $\alpha_1(g)+\alpha_2(g)=(n-1)r$, and $\Omega$ coincides with the fibre $F(z)$;
\item[$(2)$] if $l>1$, then $\alpha_3(g)=(r-f)l$, $\alpha_1(g)+\alpha_2(g)=(n-l)r$, each vertex in $X$ is "on average" adjacent to $\frac{fl(n-l)}{|X|}$ vertices in $\Omega$, and the number of such vertices in $\Omega$ does not exceed $l$ (in particular, if $|X|=f(n-l)$, then $l\le \lambda$) and $$f\le \frac{|X|}{n-l}\le|\Omega|\le \frac{ (\lambda-\mu)\alpha_1(g)}{n-l} +r\mu\le r\lambda,$$ furthermore, if $F(z)\subset\Omega$, then $X=\Gamma\setminus\Omega$ and either $(i)$ $\alpha_1(g)=(n-l)r$, $l\le\lambda$, and each $\langle g\rangle$-orbit on $\Gamma\setminus\Omega$ is an edge, or $(ii)$ $\alpha_2(g)>0$ and $l\le \mu$;
\item[$(3)$] if $t$ is even, then $X=\{x\in \Gamma |\ \partial(x,x^g)\in \{1,2\}\}$;
\item[$(4)$] if $f=1$ and $l>1$, then $\Omega$ is an $l$-clique and $l\le r\mu/(t-1)\le \mu$;
\item[$(5)$] if $f>1$, $t$ is even, and $l>1$, then the diameter of the graph $\Omega$ is 3 and $|\Omega|\le l+(l-1)^2(l-2).$
\end{itemize}
\end{lemma}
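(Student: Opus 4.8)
The plan is to deduce all five items from two structural facts: that $K=\cg(\Gamma)$ is the kernel of the action $G\to\sym(\Sigma)$ (so $K\trianglelefteq G$ and $g$ normalises $K$), and that in an $(n,r,\mu)$-cover any two fibres are joined by a perfect matching. First I would establish the skeleton common to all items. Since $g$ fixes each of the $l$ fibres of $\fixs(g)$ setwise and preserves the matching between two such fibres $F_1,F_2$, a fixed vertex of $F_1$ is matched to a fixed vertex of $F_2$; hence the matching restricts to a bijection $\Omega\cap F_1\to\Omega\cap F_2$, so $f=|\Omega\cap F|$ is the same for every fixed fibre, $|\Omega|=lf$, and each vertex of $\Omega$ has exactly one neighbour in each of the other $l-1$ fixed fibres and none elsewhere, i.e.\ $\Omega$ is regular of degree $l-1$. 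Classifying $\partial(x,x^g)$ by the fibre $F(x)$ then gives the distance distribution at once: $\partial(x,x^g)=3$ exactly when $F(x)$ is fixed and $x\notin\Omega$ (so $\alpha_3(g)=l(r-f)$), while $\partial(x,x^g)\in\{1,2\}$ exactly when $F(x)$ is not fixed (so $\alpha_1(g)+\alpha_2(g)=(n-l)r$). For $l=1$ the fixed-point count of the involution $g^{\Sigma}$ on the $n=(t^2-1)^2$ fibres must be odd, forcing $n$ odd and hence $t$ even.

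The decisive step for $(1)$ and $(3)$ is a parity observation. Since $g$ fixes the pair $\{x,x^g\}$, it permutes its common neighbours, the non-fixed ones lying in $2$-element orbits; hence the number of \emph{fixed} common neighbours of $\{x,x^g\}$ is congruent modulo $2$ to the total number of common neighbours, namely $\lambda$ if $x\sim x^g$ and $\mu$ if $\partial(x,x^g)=2$. When $t$ is even both $\mu=(t^3-2t+1)(t-1)/r$ and $\lambda=\mu+t(t^2-3)$ are odd, so every $x$ with $\partial(x,x^g)\in\{1,2\}$ has at least one fixed neighbour. Combined with the parameter-free fact that a vertex of $X$ cannot lie in a fixed fibre (else $x$ and $x^g$ would be two neighbours of a fixed vertex inside one fibre), this yields $(3)$, $X=\{x:\partial(x,x^g)\in\{1,2\}\}$. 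Item $(1)$ then follows: for $l=1$ distinct fixed vertices of the one fixed fibre have disjoint neighbourhoods, so $X=\bigcup_{w\in\Omega}[w]$ is a disjoint union and $|X|=fk=f(n-1)$ (using $k=n-1$), whereas $(3)$ gives $|X|=(n-1)r$; comparing forces $f=r$, i.e.\ $\Omega=F(z)$ and $\alpha_3(g)=0$.

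For $(2)$ I would double-count the edges between $\Omega$ and $X$. Each fixed vertex sends $k-(l-1)=n-l$ edges outside $\Omega$, all into $X$, for a total of $lf(n-l)$; since a vertex of $X$ meets at most one of the $l$ fixed fibres per fibre, it meets at most $l$ of these edges, which gives the stated average $fl(n-l)/|X|$, the chain $f\le|X|/(n-l)\le|\Omega|$, and, when $|X|=f(n-l)$, the bound $l\le\lambda$ by reading the $l$ fixed neighbours of an $x\in X$ as common neighbours of $\{x,x^g\}$. The upper estimate uses the same count as in $(3)$: summing over the moved vertices the number of their fixed neighbours equals $lf(n-l)$ and, since the fixed neighbours of $x$ are common neighbours of $\{x,x^g\}$, is bounded by $\lambda\alpha_1(g)+\mu\alpha_2(g)=(\lambda-\mu)\alpha_1(g)+(n-l)r\mu$, which rearranges to $|\Omega|\le(\lambda-\mu)\alpha_1(g)/(n-l)+r\mu$; the final $\le r\lambda$ uses $\alpha_1(g)\le(n-l)r$. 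If $F(z)\subset\Omega$ then $\Omega$ is a union of $l$ whole fibres, so $X=\Gamma\setminus\Omega$ and every moved vertex meets $\Omega$ in exactly $l$ points; the dichotomy $(i)/(ii)$ is the alternative $\alpha_2(g)=0$ versus $\alpha_2(g)>0$, with $l\le\lambda$ (resp.\ $l\le\mu$) again from viewing those $l$ fixed vertices as common neighbours of the edge (resp.\ non-edge) $\{x,x^g\}$.

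Item $(4)$ is immediate: with $f=1$ the graph $\Omega$ is regular of degree $l-1$ on $l$ vertices, hence an $l$-clique, and \eqref{HofmanClique} gives $l\le r\mu/(t-1)\le\mu$. For $(5)$ I would again use that $t$ even makes $\mu,\lambda$ odd: this forces any two non-adjacent fixed vertices in distinct fibres to have a fixed common neighbour, so any two vertices of $\Omega$ in different fibres are at distance $\le 2$ in $\Omega$; two vertices of a common fibre have no common neighbour in $\Omega$ (distance $\ge 3$) but are joined by a path of length $3$ through a neighbour and its fixed common neighbour, and since $f>1$ such antipodal pairs exist, whence $\Omega$ has diameter exactly $3$. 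A Moore-type count from a base vertex ($|\Omega_1|=l-1$, $|\Omega_2|\le(l-1)(l-2)$, and a bound on $|\Omega_3|$) then yields $|\Omega|\le l+(l-1)^2(l-2)$. The step I expect to be most delicate is precisely the diameter argument in $(5)$: exhibiting the length-$3$ path between antipodal fixed vertices requires the intermediate fixed common neighbour to sit in a suitable fibre, and keeping the parity bookkeeping (oddness of $\mu,\lambda$, the role of $r$ and of the multiplicities) coherent across the subcases is where the hypotheses $(6,r)=1$ and ``$t$ even'' have to be used with care.
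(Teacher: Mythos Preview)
The paper does not give its own proof of this lemma; it is quoted verbatim from \cite{Tsi24} and used as a black box. So there is nothing to compare against, and the question is simply whether your argument is sound. It is.

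Your skeleton (the matching between fixed fibres forces constant $f$ and regularity of degree $l-1$; the dichotomy $\partial(x,x^g)=3\Leftrightarrow F(x)$ fixed and $x\notin\Omega$) is exactly right. The parity step is the heart of $(1)$ and $(3)$: with $t$ even, $t-1$ and $t^3-2t+1$ are odd and $r$ is odd (it divides $t-1$), so $\mu$ is odd, and $\lambda=\mu+t(t^2-3)$ is odd; hence any $x$ with $\partial(x,x^g)\in\{1,2\}$ has an odd---so nonzero---number of fixed common neighbours, which pins down $X$ and then forces $f=r$ in $(1)$ by your disjoint-neighbourhood count. For $(2)$ your double count of the $lf(n-l)$ edges between $\Omega$ and its complement, together with $[x]\cap\Omega\subseteq [x]\cap[x^g]$, gives the whole inequality chain and the $(i)/(ii)$ split when $f=r$. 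Item $(4)$ is just the Hoffman clique bound \eqref{HofmanClique} with $r\le t-1$.

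For $(5)$ your argument is correct but you should make the Moore step explicit rather than leave it as ``a bound on $|\Omega_3|$''. Since you have shown $\Omega_3(u)=(F(u)\cap\Omega)\setminus\{u\}$, every vertex of $\Omega_3(u)$ has all $l-1$ of its neighbours in $\Omega_2(u)$ (none can lie in the coclique $F(u)$), while each vertex of $\Omega_2(u)$ has at most $l-2$ neighbours in $\Omega_3(u)$; combining with $|\Omega_2(u)|\le(l-1)(l-2)$ gives
\[
|\Omega|\le 1+(l-1)+(l-1)(l-2)+(l-1)(l-2)\cdot\frac{l-2}{\,l-1\,}\;\le\; l+(l-1)^2(l-2).
\]
That closes the only place you flagged as delicate.
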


It is easy to see that the groups ${G_a}^{[a]}$ and ${G_{\{F\}}}^{\Sigma-\{F\}}$ are permutation isomorphic. This implies the following useful fact.
\begin{lemma}[see {\cite[Lemma 4]{Tsi24}}]\label{Prel:Lemma6}Suppose that $G^{\Sigma}$ has permutation rank $3$ with  subdegrees $1\le k_1\le k_2$. Then
\begin{itemize}
 \item[$(i)$]
$G_a$ has exactly two orbits on $[a]$, say $X_1$ and $X_2$, with lengths $k_1$ and $k_2$ respectively, satisfying
 \begin{equation} \label{eq1}
k_1(\lambda-\lambda_1)=k_2(\lambda-\lambda_2),
\end{equation}
where $\lambda_i=|[x_i]\cap X_i|$ for a vertex $x_i\in X_i$, $i=1,2$,

\item[$(ii)$]  if  $G_a$ fixes a vertex $a^*\in F(a)-\{a\}$, then
\begin{equation} \label{mueq1*}
k_1(\mu-\mu_1)=k_2(\mu-\mu_2),
\end{equation}
where $\mu_1=|[y_1]\cap X_1|$ for a vertex $y_1\in X_1^*$,
$\mu_2=|[y_2]\cap X_2^*|$ for a vertex $y_2\in X_2$, and, for $i=1,2$, $X_i^*$ is a $G_a$-orbit on $[a^*]$ of length $k_i$.
\end{itemize}

\end{lemma}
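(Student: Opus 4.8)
The plan is to reduce the two claims to a pair of elementary edge-counting arguments, after recording how $G_a$ acts on the neighbourhoods $[a]$ and (when it exists) $[a^{*}]$. In any $(n,r,\mu)$-cover a vertex has exactly one neighbour in each fibre other than its own, so the fibre map $x\mapsto F(x)$ is a bijection from $[a]$ onto $\Sigma\setminus\{F\}$, and it is $G_a$-equivariant because $F(x)^g=F(x^g)$; together with the observation preceding the lemma that ${G_a}^{[a]}$ and ${G_{\{F\}}}^{\Sigma\setminus\{F\}}$ are permutation isomorphic, and with the fact that $\rk(G^{\Sigma})=3$, which forces the point stabiliser $M^{\Sigma}=(G^{\Sigma})_{F}$ to have exactly two orbits on $\Sigma\setminus\{F\}$ of lengths $k_1\le k_2$, this yields the two $G_a$-orbits $X_1,X_2$ on $[a]$ with $|X_i|=k_i$. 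For $(i)$ I would then double count the edges of $\Gamma$ joining $X_1$ to $X_2$. For $x_1\in X_1$ the set $[x_1]\cap[a]$ has $\lambda$ elements and omits $x_1$; the number $\lambda_1=|[x_1]\cap X_1|$ of them lying in $X_1$ is independent of $x_1$ since $G_a$ is transitive on $X_1$, so $|[x_1]\cap X_2|=\lambda-\lambda_1$, and summing over $X_1$ gives $k_1(\lambda-\lambda_1)$ such edges. The same quantity counted from the $X_2$-side equals $k_2(\lambda-\lambda_2)$, which is \eqref{eq1}.

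For $(ii)$, assume $G_a$ fixes $a^{*}\in F\setminus\{a\}$. Since $\Gamma$ is antipodal of diameter $3$ with the fibres as antipodal classes (equivalently, directly from the perfect-matching condition on two fibres), $a$ and $a^{*}$ have no common neighbour, so $[a]\cap[a^{*}]=\varnothing$. Because $G_a$ fixes $a^{*}$, the fibre map restricts to a $G_a$-equivariant bijection $[a^{*}]\to\Sigma\setminus\{F\}$, so ${G_a}^{[a^{*}]}$ is permutation isomorphic to ${G_a}^{[a]}$ and $G_a$ splits $[a^{*}]$ into two orbits $X_1^{*},X_2^{*}$ of lengths $k_1,k_2$ (with $F(X_i^{*})=F(X_i)$, although this identification is not needed for the count). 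Any $z\in[a^{*}]$ lies in a fibre $\ne F$ and is non-adjacent to $a$, so $|[z]\cap[a]|=\mu$; dually any $y\in[a]$ lies in a fibre $\ne F$ and is non-adjacent to $a^{*}$, so $|[y]\cap[a^{*}]|=\mu$. Now I would count the edges joining $X_1^{*}$ to $X_2$: from $y_1\in X_1^{*}$ one gets $|[y_1]\cap X_2|=\mu-|[y_1]\cap X_1|=\mu-\mu_1$, hence $k_1(\mu-\mu_1)$ edges in all; from $y_2\in X_2$ one gets $|[y_2]\cap X_1^{*}|=\mu-|[y_2]\cap X_2^{*}|=\mu-\mu_2$, hence $k_2(\mu-\mu_2)$ edges; equating the two expressions gives \eqref{mueq1*}.

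No input beyond the structural facts already recalled is needed, so I do not expect a genuine obstacle; the only points that require care are: (a) checking that in each of $(i)$ and $(ii)$ the two counts really enumerate the same set of edges of $\Gamma$ (here it helps that $X_i,X_i^{*}\subseteq[a]\cup[a^{*}]$ and these two neighbourhoods are disjoint); (b) justifying each appeal to ``two non-adjacent vertices in distinct fibres have exactly $\mu$ common neighbours'', which in $(ii)$ rests precisely on $[a]\cap[a^{*}]=\varnothing$; and (c) verifying that ${G_a}^{[a^{*}]}$ has orbit lengths exactly $k_1,k_2$ rather than a finer partition, which follows from the permutation isomorphism with $M^{\Sigma\setminus\{F\}}$.
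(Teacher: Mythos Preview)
Your argument is correct. The paper does not actually prove this lemma in-line; it merely cites \cite[Lemma~4]{Tsi24}, so there is no detailed proof in the present paper to compare against. Your self-contained double-counting argument---edges between $X_1$ and $X_2$ for \eqref{eq1}, and edges between $X_1^{*}$ and $X_2$ for \eqref{mueq1*}---is the natural and standard route, and the care points you flag (disjointness of $[a]$ and $[a^{*}]$, constancy of $\lambda_i,\mu_i$ by transitivity of $G_a$ on each orbit, and the permutation isomorphism giving the orbit lengths on $[a^{*}]$) are exactly the ones that need checking. This is almost certainly the same argument as in the cited reference.
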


 \section{Proof of Theorem~\ref{MainThm:PrimitiveCase}}
\label{PrimitiveCase}

In this section we will assume that $G^{\Sigma}$ is a primitive group of rank $3$
with subdegrees $1$, $k_1$, and $k_2$, where $k_1\le k_2$. Under this assumption, by \cite[Theorem 2.6.14]{ChenPonomarenkoLNCC} and \cite{Berggren}, the group $G^{\Sigma}$ of degree $(t^2-1)^2$ has two self-paired orbitals on $\Sigma$.

Since $n=(t^2-1)^2$ is not a prime power, then by the classification of primitive groups of rank $3$ (see, e.g., \cite[Chapter 11, Theorem 11.1.1]{BM})
the group $G^{\Sigma}$ is either almost simple or is of wreathed type.

\medskip

Suppose that the group $G^{\Sigma}$ is of wreathed type, that is, $P=T \times T \trianglelefteq G^{\Sigma} \leq T_0 \wr S_2$, where $T_0$ is a $2$-transitive group of degree $n_0$ on a set $\Delta$ with a simple non-abelian socle $T$, and $n=n_0^2$. Recall that in this case the action of the wreath product $T_0\wr S_2$ on the set $\Sigma$ which is identified with the cartesian square $\Delta^2$ is given by the rule
$$(\delta_1,\delta_2)^{(g_1,g_2)\tau}=(\delta_1^{g_1},\delta_2^{g_2})^{\tau}=({\delta_{(1){\tau}}}^{g_{(1)\tau}},{\delta_{{(2)\tau}}}^{g_{(2)\tau}})$$
for all $(\delta_1,\delta_2)\in \Delta^2$ and $(g_1,g_2)\tau\in T_0\wr S_2$, and the nontrivial subdegrees of the group $G^{\Sigma}$ are equal to $k_1=2(n_0-1)$ and $k_2=(n_0-1)^2$. Simplifying the equation \eqref{eq1}, we get
\begin{equation}\label{eq1*}
2(\lambda-\lambda_1)=(n_0-1)(\lambda-\lambda_2).
\end{equation}

Note that $G_a$ contains a subgroup $A=S \times S$, where $S$ is the stabilizer of a point in $T$, which has exactly two orbits on $X_1$, say $Y_1$ and $Y_2$, so that $A^{Y_i} \simeq S$ and the lengths of these orbits are $|Y_1|=|Y_2|=n_0-1$.
So $\lambda_1$, being a sum of some subdegrees of $A^{X_1}$, must be a sum of some subdegrees of groups $A^{Y_1}$ and $A^{Y_2}$.

By \cite{Tsi24}, $T$ cannot be an alternating group of degree $n_0$. Also $T$ cannot be a sporadic simple group, since
in this case $n_0\in\{11,15,12,22,23,24,176,276\}$ and the condition $t^2-1=n_0$ gives $n_0=15,24$, which contradicts the condition $t\ge 6$.

In what follows, we will apply the identity \eqref{eq1*} to show that $\lambda_1$ cannot be a sum of subdegrees  of $A^{X_1}$. In doing so, we exclude almost all other admissible cases for $T$ described by Theorem \ref{2tra}.

1. Suppose that $n_0-1=q=p^e$, where $p$ is a prime. Since $t\ge 6$ and $t^2-2=n_0-1$, $p>2$, $e$ is odd, and $p^e$ divides $\lambda-\lambda_1$.
Thus $T\not\simeq Sz(q)$. Let us consider the cases when $T\simeq L_2(q),{^2G}_2(q) $ or $U_3(q)$.

First, note that since $\lambda_1+1\le k_1$ and $$\lambda=
(t^2-2)z+\frac{t-1}{r}-t,$$
where $z=\frac{(t-1)^2}{r}+\frac{t-1}{r}+t$,
then $$\lambda_1=z_1(t^2-2)-t+\frac{t-1}{r},$$
where $z_1\in\{ 1,2\}$, and $$\lambda_2=\lambda-2(z-z_1)=(t^2-4)z+\frac{t-1}{r}-t+2z_1.$$

The facts about subdegrees of point stabilizers in the corresponding $2$-transitive groups used below are well-known (see, for example, \cite{Wilson}, \cite{DM} and also \cite{Tsi22c}).

\smallskip

In the case $T \simeq L_2(q)$, for any $i\in\{1,2\}$, the stabilizer of a point in $A^{Y_i}$ has exactly three orbits on $Y_i$: one orbit of length 1 and two orbits of length $\frac{q-1}{2}$.
Hence $\lambda_1 \equiv 0\pmod {\frac{t^2-3}{2}}$ or $\lambda_1 \equiv 1\pmod {\frac{t^2-3}{2}}$. But then $z_1-t+{\frac{t-1}{r}}\equiv 0$ or $1\pmod {\frac{t^2-3}{2}}$, a contradiction.

In the case of $T \simeq {^2G}_2(q)$, for any $i\in\{1,2\}$, the stabilizer of a point in $A^{Y_i}$ contains a cyclic subgroup of order $(q-1)/2$, which has one orbit of length 1 on $Y_i$ and the length of any other its orbit   on $Y_i$ is $(q-1)/2$.
Hence $\lambda_1 \equiv 0\pmod {\frac{t^2-3}{2}}$ or $\lambda_1 \equiv 1\pmod {\frac{t^2-3}{2}}$. 
Contradiction, as above.

In the case $T \simeq U_3(q)$, for any $i\in\{1,2\}$, the stabilizer of a point in $A^{Y_i}$ has one orbit of length 1 on $Y_i$, and the length of any other its orbit on $Y_i$ is a multiple of $q-1$.
Hence $\lambda_1 \equiv 0\pmod {t^2-3}$ or $\lambda_1 \equiv 1\pmod {t^2-3}$,
again a contradiction.

\smallskip

2. For $T \simeq Sp_{2d}(2)$, $d\ge 3$
we have $n_0=2^{2d-1}\pm 2^{d-1}$.
\smallskip

2.1. Let $(t-1)=2x$ and $(t+1)=2^{d-2}y$, where $xy=2^{d}\pm 1$. Then $2^{d-3}$ divides $y\pm 1$,
which implies $d\le 6$. But for $3\le d\le 6$ only two cases are possible: $t=11$ and $d=4$, or $t=23$ and $d=5$.

For $d=4$ or $d=5$ we have $\lambda_1\in\{110,229\}$ and $\lambda_1\in\{506,1033\}$ respectively. But in any case the stabilizer of a point from $Y_i$ to $A^{Y_i}$ has exactly two non-single-point orbits on $Y_i$, and the lengths of these orbits are $54$ and $64$ or $256$ and $270$ respectively for $d=4$ and $d=5$ (this can be verified, for example, in GAP \cite{gap}). Contradiction.

\smallskip

2.2. Let $(t+1)=2y$ and $(t-1)=2^{d-2}x$, where $xy=2^{d}\pm 1$. Then $2^{d-3}$ divides $x\mp 1$,
which again implies $d\le 6$. But for $3\le d\le 6$ there is no admissible $t$, a contradiction.

\smallskip

3. Let $T \simeq L_d(q)$, $d\ge 3$.
Then the number $n_0-1=q\frac{q^{d-1}-1}{q-1}$ can be odd only if $q$ and $d-1$ are both  odd.

First, note that the cases $d=3$ and $d=5$ are impossible. Indeed, since $t^2-q^2\ne q+2$ for $t\ge 6$, then $d\ne 3$.
Now we show that $d\ne 5$. Suppose the contrary.
Since $d$ is odd, then $q\ne 2$ and therefore $q^2<t<q^3$. Let us expand $t$ in base $q$:
$t={\alpha}q^2+{\beta}q+\gamma$, where $0<\alpha, \gamma<q$ and $0\le \beta< q$. Then $t^2=\alpha^2q^4+2\alpha\beta q^3+(2\alpha\gamma+\beta^2)q^2+2\beta\gamma q+\gamma^2=q^4+q^3+q^2+q+2$ and therefore $\alpha=1$. If $\beta=0$, then $q$ divides the number $\gamma^2-2$ that is less than $q^2$, and then $\gamma=\sqrt{ql+2}$ for some non-negative integer $l$. But this leads to the contradiction 
$2\sqrt{2+ql} +(l-1)/(q+1)=q$.

Further, arguing as in Case 1, in view of the relation \eqref{eq1*} we have $$\lambda_1=z_1\frac{n_0-1}{(2,n_0-1)}-t+\frac{t-1}{r},$$
where $z_1\in\{ 1,2,3,4\}$ for even $n_0-1$ and $z_1\in\{1,2\}$ for odd $n_0-1$.

Let us consider a vector space $V$ of dimension $d$ over $F_q$ and identify $T$ with $PSL(V)$.
Fix an arbitrary basis $u,w_1,w_2,...,w_{d-2}$ for $V$ and denote by $W_1$ the subspace generated by the vectors $w_2,...,w_{d-2}$. Recall that the stabilizer of two projective points $\langle u\rangle$ and $\langle w_1\rangle$ in $PSL(V)$ contains a subgroup isomorphic to $GL(W_1)$, which on $PV$ has one orbit of length $q-1$ (forming points of the set $P\langle u,w_1\rangle-\langle u\rangle-\langle w_1\rangle$), one orbit of length $\frac{q^{d-2}-1}{q-1}$ (forming points of  $PW_1$), and $q+1$ orbits of length $q^{d-2}-1$, that are entirely contained in $PV- PW_1-P\langle u,w_1\rangle$. Hence, identifying $\Sigma$ with the cartesian square of the projective space $PV$ for $PSL(V)$, we obtain
$$\lambda_1=\alpha_1+\alpha_2(q-1)+\alpha_3\frac{q^{d-2}-1}{q-1}+\alpha_4(q^{d-2}-1)$$
for some $\alpha_1\in\{0,1\}$, $\alpha_2,\alpha_3\in\{0,1,2\}$, $\alpha_4\in\{0,1,\ldots,2(q+1)\}$.

Thus
$$\frac{z_1q(q^{d-1}-1)}{(2,n_0-1)(q-1)}-t+\frac{t-1}{r}=\alpha_1+\alpha_2(q-1)+\alpha_3\frac{q^{d-2}-1}{q-1}+\alpha_4(q^{d-2}-1).$$

Hence
\begin{equation} \label{eqnum1}
\frac{z_1q}{(2,n_0-1)}(q^{d-2}-1+\frac{q^{d-2}-1}{q-1}+1)-t+\frac{t-1}{r} -\alpha_1-\alpha_2(q-1)=\alpha_3\frac{q^{d-2}-1}{q-1}+\alpha_4(q^{d-2}-1).
 \end{equation}
Multiplying both sides of the equality \eqref{eqnum1} by $(2,n_0-1)$, we obtain
\begin{equation} \label{eqnum2}
z_1q(q^{d-2}-1+\frac{q^{d-2}-1}{q-1})-\gamma=(2,n_0-1)\big(\alpha_3\frac{q^{d-2}-1}{q-1}+\alpha_4(q^{d-2}-1)\big),
\end{equation}
where
\begin{equation} \label{eqgam}
\gamma=-z_1q+(2,n_0-1)(t-\frac{t-1}{r}+\alpha_1+\alpha_2(q-1))=l\frac{q^{d-2}-1}{q-1}
\end{equation}
for some integer $l$.
Then, dividing both sides of the equality \eqref{eqnum2} by $\frac{q^{d-2}-1}{q-1}$, we obtain
\begin{equation} \label{eqnum3}
z_1q^2 =z_1q\cdot(q-1)+z_1q =l+(2,n_0-1)(\alpha_3+\alpha_4(q-1)),
\end{equation}
therefore, the number $l+(2,n_0-1)\alpha_3-z_1$ is divisible by $q-1$.

\smallskip
  
3.1. Let $l=\gamma=0$.
Then the number $(2,n_0-1)\alpha_3-z_1$ is divisible by $q-1$. Therefore $q\le 5$ or $z_1=(2,n_0-1)\alpha_3$ and $\alpha_4=\frac{(q+1)z_1}{(2,n_0-1)}$.

On the other hand, $\frac{z_1}{(2,n_0-1)}\le 2$ regardless of the parity of $n_0-1$, hence the expression \eqref{eqgam} yields
\begin{equation} \label{eqnum4}\frac{t-1}{r}(r-1)=z_1\frac{q}{(2,n_0-1)}-1-\alpha_1-\alpha_2(q-1) \le 2q-1.
\end{equation}
Therefore, $t-1\le(2q-1)2q$ and $$\frac{q^d-1}{q-1}=t^2-1\le (2q(2q-1))^2+4q(2q-1)=16q^4-16q^3+12q^2-4q,$$  which gives
$d\le 8$, and also that $q\le 16$ for $d\ge 6$.
Straightforward enumeration of admissible $d\ge 6$ such that $(t^2-1)(q-1)=q^d-1$ shows that $q=2$ and either $d=6$, $t=8$, and $r=7$, or $d=8$, $t=16$, and $r=5$, a contradiction with the estimate \eqref{eqnum4}.

Thus $d=4$, $z_1=2\alpha_3\in \{2,4\}$ and $\alpha_4\in \{q+1,2(q+1)\}$.

\smallskip

3.2. Suppose that $\gamma={\frac{q^{d-2}-1}{q-1}}l$ for some nonzero integer $l$.

\smallskip

3.2.1. Suppose that $l<0$. Then $-\gamma<z_1q$, which implies either $d=5$ and $q=2$ (which is impossible since $t^2\ne 2^5$), or $d=4$ and $z_1>-l$.

On the other hand, the equation \eqref{eqnum3} implies $z_1q^2<(2,n_0-1)(\alpha_3+\alpha_4(q-1))$, that is, $z_1<2(2,n_0-1)$.

Since the number $z_1-l-(2,n_0-1)\alpha_3$ is divisible by $q-1$, it cannot be negative
(otherwise $4>(2,n_0-1)\alpha_3>z_1-(-l)\ge 2$ and $q=2$, which is impossible).
But if $d=4$, then $z_1-l-(2,n_0-1)\alpha_3\le 2z_1\le 8,$ hence $q\le 9$, but then $t^2=q^3+q^2+q+2$ and there is no admissible $t$.
Contradiction.

\smallskip

3.2.2. Let $l>0$. Since $t\le q^{d/2}$, then    the identity \eqref{eqgam} implies that
$$(2,n_0-1)(q^{d/2}+1+2q-1)>l\frac{q^{d-2}-1}{q-1}+z_1q,$$ hence $d\le 7$.
If $d=7$, then the same identity gives $q<4$, hence $q=3$ and $t^2-1=1093$, a contradiction.

For $d=6$,  similarly we obtain $l=1$ and $(2,n_0-1)=2$. But the number $1+2\alpha_3-z_1$ is divisible by $q-1$, therefore $q\le 5$ or $z_1=1+2\alpha_3$. If $q\le 5$, then $q=2$, $t=8$ and $r=7$, but then by definition \eqref{eqgam} the number $\gamma$ must be odd, a contradiction.

Then, in view of the relation \eqref{eqgam}
$$2t=2\frac{t-1}{r}+z_1q+q^3+q^2+q+1-2(\alpha_1+\alpha_2(q-1))>q^3.$$
On the other hand, for $q\ge 6$ we have $t+1\le q^3$ and $t-1\ge \frac{q^3-1}{q-1}$, which implies
$$4t^2+t^2-(q-1)\le(q-1)t^2-(q-1)=(q-1)(t^2-1)=q^6-1<q^6$$
and $2t<q^3$, a contradiction.
Hence $q=2$ and $t=8$, a contradiction with the fact that $\gamma$ is odd.

Hence $d=4$.

The proof is complete.

\section{Proof of Theorem~\ref{MainThm:ImprimitiveCase}}
\label{ImprimitiveCase}

Let $G^{\Sigma}$ be an imprimitive group of rank $3$ with subdegrees $1$, $k_1$, and $k_2$, and $\Sigma_i$ be the set of fibres intersecting $X_i$, where $i=1,2$.
Denote by $\mathcal{B}$ the unique nontrivial imprimitivity system of $\tG:=G^{\Sigma}$ (see, for example, \cite[Lemma 3.3]{DGLPP}) and fix an arbitrary block $B$ (of size $k_1+1$).
Let $m=|\mathcal{B}|$ and $m_1=k_1+1$,
so that $m_1$ divides $(k_2,(t^2-1)^2)$.

Let $H={{\tG_{\{B\}}}}^B$ and $T=\soc(H)$.
Then $\tG\le H\wr \sym(\mathcal{B})$, $H$ acts $2$-transitively on $B$
and $\tG^{\mathcal{B}}$ acts $2$-transitively on $\mathcal{B}$ (see, e.g., \cite[Lemma 3.1]{DGLPP}).

Next we introduce some additional technical notation.

For  $A\subseteq \cF(\Gamma)$ we denote by $\overline{A}$  the set of all vertices
contained in fibres from $A$, and for  $X\subseteq\cV(\Gamma)$ we denote by $F(X)$   the set of all fibres containing a vertex from $X$.

Let $W=G_{\{\overline{B}\}}$, $S$ be the kernel of the action induced by $G$ on $\mathcal{B}$ (i.e., $S$ is the full preimage of $\tG\cap H^m$ in $G$), and $N$ be the kernel of the action induced by $W$ on ${B}$. Without loss of generality, we further assume that $a\in F\in B$.
Then $SN\trianglelefteq W$, $K\le N\cap S\trianglelefteq W$, and $N=K:N_a$.
\medskip

Note that $\cA(\Gamma)$ is the union of two self-paired orbitals of $G$, say, $Q_1$ and $Q_2$, with $|Q_i(a)|=k_i$, where $i=1,2$. In this case, the graph $\Gamma_i=(\cV(\Gamma), Q_i)$ can be considered as an undirected arc-transitive graph. It is edge-regular with parameters $(v, k_i,\lambda_i)$ and is an $r$-cover of its quotient $\Phi_i$ on $\Sigma$. More precisely, for each block $B'\in\mathcal{B}$, the subgraph $\overline{B'}$ of $\Gamma_1$ is an $r$-cover of the complete graph on $B'$, and $\Phi_1$ is the union of $m$ isolated $m_1$-cliques, and $\Gamma_2$ is an $r$-cover of $\Phi_2$, the complement of $\Phi_1$, that is, a complete $m$-partite graph with parts of order $m_1$. \begin{lemma}\label{ImprimitiveCase:Lem1} We have either
\begin{itemize}
 \item[$(i)$] $\lambda=\lambda_1=\lambda_2\le k_1-1$ and $k_1+1>4(t+1)^2$, or
\item[$(ii)$] $\lambda\not\in\{\lambda_1,\lambda_2\}$,
$k_1+1$ divides $\lambda-\lambda_1$, $(m-1,k_1)\ne 1$ and $k_1$ divides $(m-1,k_1)(\lambda-\lambda_2)$.
\end{itemize}
 \end{lemma}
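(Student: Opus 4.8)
The plan is to analyze the combinatorial structure of the two arc-orbital graphs $\Gamma_1$ and $\Gamma_2$ induced on $\cV(\Gamma)$ by the imprimitive rank-$3$ action on $\Sigma$, exploiting the fact (recorded just before the statement) that $\Gamma_1$ is an $r$-cover of a disjoint union of $m$ cliques of size $m_1 = k_1+1$ while $\Gamma_2$ is an $r$-cover of the complete $m$-partite graph with parts of size $m_1$. The dichotomy in the statement is exactly the dichotomy $\lambda \in \{\lambda_1,\lambda_2\}$ versus $\lambda \notin \{\lambda_1,\lambda_2\}$ of Lemma~\ref{Prel:Lemma6}(i). So the first step is to invoke \eqref{eq1}, namely $k_1(\lambda-\lambda_1)=k_2(\lambda-\lambda_2)$, and split into the two cases. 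Since $\Gamma_1$ is a disjoint union of $m$ connected components (the subgraphs $\overline{B'}$), each of which is an $r$-cover of $K_{m_1}$, and $\Gamma_2$ is connected, the edges inside a block behave differently from the edges between blocks; in particular, for an edge $\{a,x\}$ of $\Gamma_i$, the $\lambda_i$ common neighbours split according to which block they lie in, and this bookkeeping is what produces the divisibility conditions.

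In case $(i)$, $\lambda_1 = \lambda_2 = \lambda$. Here I would argue as follows. For an edge $\{a,x_1\}$ inside the block $\overline{B}$, all $\lambda = \lambda_1$ common neighbours of $a$ and $x_1$ must themselves lie inside $\overline{B}$: indeed, a common neighbour $y$ with $F(y)\notin B$ would be joined to both $a$ and $x_1$ by $\Gamma_2$-edges, but $a$ and $x_1$ are in the same block, so $y$ being adjacent (in $\Gamma_2$) to both is fine — wait, rather the point is that $\overline{B}$ with its induced $\Gamma_1$-edges is a connected edge-regular graph, so $\lambda \le m_1\cdot r - 2$ at least, but more precisely $\overline{B}$ being an $(m_1, r, \mu_{B})$-cover forces $m_1 - 1 \ge \lambda_1 - (\text{common neighbours outside }\overline B)$; then using the Hoffman coclique/clique bounds \eqref{HofmanClique}, \eqref{HofmanCoclique} and the explicit value $\lambda = (t^3-2t+1)(t-1)/r + t(t^2-3)$ together with $k_1 \le k_2$ and $k_1+k_2 = k \le t^4$, one derives $k_1 + 1 > 4(t+1)^2$. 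The inequality $\lambda_1 \le k_1 - 1$ is automatic (a vertex has $\lambda_1$ neighbours among its $k_1$ co-orbit-neighbours plus itself). I expect the quantitative bound $k_1+1 > 4(t+1)^2$ to require a short but careful estimate: roughly, if $\overline{B}$ were "small" then $\Gamma_1$ inside $\overline B$ could not accommodate $\lambda$ common neighbours of an edge, forcing $m_1$ large; the factor $4(t+1)^2$ should come from comparing $\lambda \approx t^4/r$ with the trivial bound $\lambda_1 < m_1 r$ and using $r \mid t-1$.

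In case $(ii)$, $\lambda \ne \lambda_1$. From \eqref{eq1} we get $k_1(\lambda - \lambda_1) = k_2(\lambda-\lambda_2)$ with $k_1(m-1) = k_2$ (the standard relation between subdegrees of an imprimitive rank-$3$ group with blocks of size $m_1 = k_1+1$, since $k_2 = k_1(m-1)$). Substituting, $\lambda - \lambda_1 = (m-1)(\lambda-\lambda_2)$, so $(m-1) \mid (\lambda - \lambda_1)$. For the divisibility by $k_1 + 1$: an edge $\{a,x\}$ of $\Gamma_1$ has all its $\lambda_1$ common $\Gamma_1$-neighbours plus some common $\Gamma_2$-neighbours; counting common neighbours of $a,x$ in $\Gamma$ (which is $\lambda$) block-by-block, the contribution from each block $B' \ne B$ is constant by arc-transitivity of $\Gamma_2$ and equals some $c$, while the contribution from $B$ itself is $\lambda_1$ minus a correction — more cleanly, the number of common neighbours of $a$ and $x$ lying outside $\overline B$ is $\lambda - \lambda_1'$ where $\lambda_1'$ counts those inside, and this outside-count, being spread over $m-1$ blocks each contributing equally, is divisible by $m-1$; combined with a parallel count for $\Gamma_2$-edges one gets $k_1 \mid (m-1,k_1)(\lambda-\lambda_2)$ and $(k_1+1) \mid (\lambda-\lambda_1)$. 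The hypothesis $(m-1,k_1) \ne 1$ I would obtain by contradiction: if $(m-1,k_1)=1$ then from $k_1 \mid (m-1,k_1)(\lambda-\lambda_2) = \lambda - \lambda_2$ and $(m-1)\mid \lambda-\lambda_1 = (m-1)(\lambda-\lambda_2)$ one would force $k_2 = k_1(m-1) \mid \lambda - \lambda_2$, so $k_2 \le |\lambda - \lambda_2|$, which should contradict $k_2$ being comparable to $n$ while $\lambda-\lambda_2$ is much smaller (here one uses the explicit eigenvalue/parameter formulas and $k_1 \le k_2$, $k_1 k_2 \le$ something, together with the bound $\lambda_2 \ge 0$).

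The main obstacle I anticipate is pinning down the exact numerical threshold $4(t+1)^2$ in case $(i)$ and, relatedly, making the "block-by-block common-neighbour count" fully rigorous: one must track how a common neighbour of an edge of $\Gamma_i$ distributes among the $\Gamma_1$- and $\Gamma_2$-adjacencies, which requires knowing that $\Gamma_2$ restricted between two fixed blocks is itself a regular bipartite-type configuration (it is an $r$-cover of $K_{m_1,m_1}$), so that the counts are genuinely block-independent. Once that regularity is in hand the divisibility statements follow by elementary counting, but setting it up cleanly — distinguishing the case where the edge $\{a,x\}$ lies inside a block from the case where it crosses blocks, and handling the fibre of $a$ carefully since $F(a) \in B$ — is where the care lies.
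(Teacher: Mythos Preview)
Your proposal has the right skeleton --- invoke \eqref{eq1} and split into the two cases --- but several concrete steps are either wrong or far more elaborate than necessary, and the paper's argument is essentially a three-line computation.

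First, your subdegree relation is incorrect: you write $k_2=k_1(m-1)$, but in fact $k_2=(k_1+1)(m-1)=m_1(m-1)$, since $n=m\cdot m_1$ and $k_1+k_2=n-1$. With the correct value, \eqref{eq1} becomes
\[
k_1(\lambda-\lambda_1)=(k_1+1)(m-1)(\lambda-\lambda_2),
\]
and the divisibility conditions in case~$(ii)$ fall out immediately by elementary gcd reasoning: since $\gcd(k_1,k_1+1)=1$ one gets $(k_1+1)\mid(\lambda-\lambda_1)$ and $k_1\mid(m-1)(\lambda-\lambda_2)$; writing $d=(m-1,k_1)$, the latter yields $k_1\mid d(\lambda-\lambda_2)$. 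No block-by-block common-neighbour count is needed.

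Second, for case~$(i)$ you are looking in the wrong place. The paper simply notes that $\lambda_1\le k_1-1$ trivially and that a direct calculation (using $r\ge 5$, $t\ge 6$) gives
\[
\lambda=(t^3-2t+1)\frac{t-1}{r}+t(t^2-3)>4(t+1)^2.
\]
Hence if $\lambda=\lambda_1$ then $k_1-1\ge\lambda>4(t+1)^2-1$, so $k_1+1>4(t+1)^2$. No Hoffman bounds, no cover structure on $\overline{B}$, nothing about where common neighbours lie.

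Third, for the claim $(m-1,k_1)\ne 1$ in case~$(ii)$, the key point you miss is that $\lambda_i\le\lambda$ always, since $\lambda_i=|[x_i]\cap X_i|$ while $|[x_i]\cap [a]|=\lambda$; so both $\lambda-\lambda_i\ge 0$. If $(k_1,k_2)=(m-1,k_1)=1$, then the displayed equation forces $k_2\mid(\lambda-\lambda_1)$ and $k_1\mid(\lambda-\lambda_2)$, whence $(\lambda-\lambda_1)+(\lambda-\lambda_2)\ge k_1+k_2$ as these are nonzero. But a short estimate using $r\ge 5$ gives $2\lambda<k_1+k_2=t^2(t^2-2)$, and since $\lambda_i\ge 0$ we obtain $(\lambda-\lambda_1)+(\lambda-\lambda_2)\le 2\lambda<k_1+k_2$, a contradiction. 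Your sketch had the roles of $k_1,k_2$ in these divisibilities swapped and omitted the crucial observation $\lambda_i\le\lambda$.
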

\begin{proof}

Obviously, $\lambda_1\le k_1-1$, $\lambda=(t^3-2t+1)\frac{(t-1)}{r}+t(t^2-3)>4(t+1)^2$ and due to the equality \eqref{eq1}
$$k_1(\lambda-\lambda_1)=(k_1+1)(m-1)(\lambda-\lambda_2).$$

Now assume that $\lambda\not\in\{\lambda_1,\lambda_2\}$. Since $r\ge 5$, then
$$2\lambda=t(t^2-2)\frac{2(t-1)}{r}+ \frac{2(t-1)}{r}+2t(t^2-3)<t(t^2-2)\frac{(t-1)}{2}+2t(t^2-2)<t^2(t^2-2),$$
which implies
$$2\lambda-\lambda_1-\lambda_2<k_1+k_2=t^2(t^2-2).$$
Therefore $(k_1,k_2)=(m-1,k_1)\ne 1$ whenever $\lambda\not\in\{\lambda_1,\lambda_2\}$.

This implies the required assertion.

\end{proof}

\subsection{Reduction to  affine groups $H$}

Here we aim to prove the following result.

\begin{theorem}\label{ImprimitiveCase:Thm1} The group $H$ is not almost simple.

\end{theorem}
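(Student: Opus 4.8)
The plan is to rule out the possibility that $H = {\tG_{\{B\}}}^B$ is an almost simple $2$-transitive group of degree $m_1 = k_1+1$. Since $H$ acts $2$-transitively on a block $B$ and the subgraph $\overline{B}$ of $\Gamma_1$ is an abelian $r$-cover of the complete graph $K_{m_1}$, the first step is to extract constraints on $m_1$: it must divide $(k_2, (t^2-1)^2)$, and by Lemma~\ref{ImprimitiveCase:Lem1} either $\lambda = \lambda_1 = \lambda_2$ with $m_1 > 4(t+1)^2$, or $m_1$ divides $\lambda - \lambda_1$. In either case $m_1$ is forced to be comparable in size to $(t^2-1)^2$ (roughly, $m_1 \le (t^2-1)^2$ but $m_1$ is large), and since $m_1$ divides $(t^2-1)^2 = (t-1)^2(t+1)^2$, its prime divisors lie in $\pi(t-1) \cup \pi(t+1)$. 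I would then run through the list of almost simple $2$-transitive groups in Theorem~\ref{2tra}(1), degree by degree, using the divisibility $m_1 \mid (t^2-1)^2$ together with the congruence constraints from Lemma~\ref{ImprimitiveCase:Lem1} and the structure of subdegrees of the point stabilizer $H_a^{B}$ acting on $B \setminus \{a\}$ — exactly the style of argument used in the proof of Theorem~\ref{MainThm:PrimitiveCase}.

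The key technical input is that $\overline{B}$ is itself an abelian $(m_1, r, \mu_B)$-cover for a suitable $\mu_B$, and that $\tG_{\{B\}}$ induces on $\overline{B}$ a vertex-transitive automorphism group whose image on the fibres of $\overline{B}$ is the $2$-transitive group $H$ (of rank $2$, hence "rank at most $3$"). This means the earlier machinery — Lemmas~\ref{Prel:Lemma3}--\ref{Prel:Lemma5} and especially the arithmetic lemmas~\ref{Prel:LemmaNT}, \ref{Prel:LemmaZsigmCor} — applies to the pair $(\overline{B}, \tG_{\{B\}})$. In particular, the prime-divisor restrictions in Lemma~\ref{Prel:Lemma3}(3),(5) bound $\pi(H)$ in terms of $\pi(m_1! \cdot (t-1))$, and since $m_1 \mid (t^2-1)^2$ this severely limits which simple socles $T = \soc(H)$ can occur. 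For the generic families — $\alt_{m_1}$, $L_d(q)$, $Sp_{2d}(2)$, $U_3(q)$, the Ree and Suzuki groups — I would combine a Zsigmondy-prime argument (via Lemma~\ref{Prel:LemmaZsigmCor} and Lemma~\ref{Prel:LemmaNT}) with the observation that $m_1$ has a very rigid shape: it equals $(t-1)^a(t+1)^b$ up to small factors, whereas the degrees $(q^d-1)/(q-1)$, $q^3+1$, $q^2+1$, $2^{2d-1}\pm 2^{d-1}$ have incompatible factorizations. The sporadic degrees in Theorem~\ref{2tra}(1)(vii),(viii) are handled by checking directly that $n$ never equals $(t^2-1)^2$ with $t \ge 6$ and $m_1 = n \mid (t^2-1)^2$.

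There is one genuinely delicate branch, and I expect it to be the main obstacle: the linear case $T = L_d(q)$ with $m_1 = (q^d-1)/(q-1)$. Here $m_1 \mid (t-1)^2(t+1)^2$ does not immediately collapse, because $(q^d-1)/(q-1)$ can share all its prime divisors with $t^2-1$ when $q$ and $t$ are closely related. The plan for this case is to use the subdegree structure of $\mathrm{PGL}_d(q)$ acting on the projective space — the stabilizer of two points has orbits of lengths $q-1$, $(q^{d-1}-1)/(q-1)$ and several of length $q^{d-1}-1$ — feed these into the congruence $m_1 \mid \lambda - \lambda_1$ (or the equal-$\lambda$ case), and derive a Diophantine relation forcing $q$ and $d$ small, which then contradicts $t \ge 6$ or the precise value of $\mu$; the base-$q$ expansion trick from Section~\ref{PrimitiveCase} should again do the job. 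A second subtle point is distinguishing the socle of $H$ from $H$ itself and handling the exceptional non-$2$-transitive socle case $(L_2(8), 28)$ noted after Theorem~\ref{2tra}; but since $28 \nmid (t^2-1)^2$ for $t \ge 6$ this is immediate. Once every family in Theorem~\ref{2tra}(1) is eliminated, the only remaining possibility is that $H$ is affine, which is the assertion of Theorem~\ref{ImprimitiveCase:Thm1}.
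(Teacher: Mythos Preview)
Your strategy diverges substantially from the paper's and rests on an unjustified claim. The paper does not enumerate the almost simple $2$-transitive possibilities for $H$; instead it invokes \cite[Theorem 1.2, Corollary 1.4]{DGLPP}: when $H$ is almost simple and $n>24$, either $\tG$ is \emph{full} (so $T^{m}\le \tG$) or $\tG$ is quasi-primitive and almost simple. The quasi-primitive branch dies in one line via \cite[Theorem 1.3]{DGLPP}. In the full branch, $N_a$ contains a copy $L\simeq T^{m-1}$ which fixes $X_1$ pointwise and acts $2$-transitively on each block $B'\ne B$ (with the $(L_2(8),28)$ exception handled separately). The argument is then uniform in $T$: one shows that each $[a]\cap\overline{B'}$ is an $m_1$-clique or that all are cocliques, applies the Hoffman bounds \eqref{HofmanClique}--\eqref{HofmanCoclique}, uses $L/C_L(K)\le\aut(K)\le\sym_{t-1}$ to force some direct factor of $L$ into $C_L(K)$, and extracts an involution $h$ with $|\fixs(h)|\ge k_2$, contradicting Lemma~\ref{Prel:Lemma5}. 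No case-split on $\soc(H)$ is needed.

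The gap in your plan is the assertion that $\overline{B}$ is an abelian $(m_1,r,\mu_B)$-cover to which Lemmas~\ref{Prel:Lemma3}--\ref{Prel:Lemma5} transfer. The paper only states that $\overline{B}$ is an $r$-cover of $K_{m_1}$ in $\Gamma_1$ in the matching sense (one $\Gamma_1$-neighbour per fibre); nothing guarantees a constant $\mu$-parameter inside $\overline{B}$, so it need not be distance-regular, and Lemmas~\ref{Prel:Lemma4}--\ref{Prel:Lemma5} do not apply to it. What \emph{does} survive is the congruence from Lemma~\ref{ImprimitiveCase:Lem1} together with the fact that $\lambda_1$ is a sum of subdegrees of the two-point stabilizer $H_{F,F'}$ on $B$ --- so a Section~\ref{PrimitiveCase}-style attack on $\lambda_1$ is in principle conceivable, but you would be running it for the full cover $\Gamma$, not for a smaller cover $\overline{B}$, and it becomes a long case analysis with no obvious terminus. (Also, $28\mid(t^2-1)^2$ does occur, e.g.\ $t=15$, $r=7$, so the $(L_2(8),28)$ exception cannot be dismissed as you suggest; the paper treats it separately inside the full-case argument.) The structural shortcut you are missing is precisely the DGLPP dichotomy, which replaces the enumeration of $\soc(H)$ by the single large subgroup $T^{m-1}\le G_a$ whose action forces the contradiction.
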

\begin{proof}
Suppose that the group $H$ is almost simple. Since $n>24$, then
in view of \cite[Theorem 1.2, Corollary 1.4]{DGLPP} we have either
\begin{itemize}
\item[$(i)$] $T^{m}\le S^{\Sigma}\le \tG$ (and $\tG$ is full), or

\item[$(ii)$] $S=K$ (that is, $\tG\cap H^m=1$ and $\tG\simeq \tG^{\mathcal{B}}$) and $\tG$ is an almost simple group.
\end{itemize}
\smallskip

Let us consider \underline{case $(i)$} and assume that $T^{m}\le \tG$.
Note that the groups $N_a^{X_2}$ and $N^{\Sigma_2}$ are permutation isomorphic, and $N_a\simeq N_a^{X_2}\simeq N^{\Sigma}$. Hence the group $N^{\Sigma_2}$ contains a subgroup $T^{m-1}$ of $S^{\Sigma_2}$ whose image $L$ in $N_a$, on the one hand, fixes the set $X_1$ pointwise, and on the other hand, fixes  setwise  $m-1$  blocks $B'$ on $\Sigma_2$ so that $L^{B'}\simeq T$.

Further, $L$ fixes the set $X_1$ pointwise and acts transitively (and even $2$-transitively for $(T,m_1)\neq (L_2(8),28)$) on each block $B'\in\mathcal{B}-\{B\}$. Moreover, the set of such blocks $B'$ gives an imprimitivity system of the group $G_a$ on $X_2$ whose blocks are the sets $[a]\cap \overline{B'}$.

Suppose that $(T,m_1)\neq (L_2(8),28)$ and $[a]\cap \overline{B'}$ contains an edge for some block $B'\in\mathcal{B}-\{B\}$. Then $[a]\cap \overline{B''}$ is an $m_1$-clique for all $B''\in \mathcal{B}-\{B\}$. This together with edge regularity of the graph $\Gamma_1$  implies $\lambda_1=k_1-1$ and
\begin{equation}\label{x0}
k_1+1\le \mu\frac{r}{t-1}
\end{equation}
by the bound \eqref{HofmanClique}. Since $\lambda>\mu$, by Lemma \ref{ImprimitiveCase:Lem1} we obtain $\lambda\not\in\{\lambda_1,\lambda_2\}$ and
\begin{equation}\label{x1}
k_1+1 \text{ divides }\lambda+2.
\end{equation}

If $[a]\cap \overline{B'}$ is a coclique for any block $B'\in\mathcal{B}-\{B\}$, then the bound \eqref{HofmanCoclique} gives $(k_1+1)\mu\le n(t-1)$,
therefore $\mu\le (t-1)m$ and $(t^2+t-1)(t-1)/r\le m$.
On the other hand, each vertex in $\overline{B'}\cap[a]$ is adjacent on average to $\frac{k_1}{r-1}$ vertices in $[a^*]\cap \overline{B'}$ for all $a^*\in F(a)-\{a\}$, so in this case
\begin{equation}\label{x2}
\frac{k_1}{r-1}\le \mu.
\end{equation}

We now show that $k_2>\mu$ or $(T,m_1)= (L_2(8),28)$.
Suppose that $k_2\le \mu$. If $(T,m_1)\neq (L_2(8),28)$, then, as was proved above, one of the relations \eqref{x1} or \eqref{x2} holds. In the corresponding cases we get
$$n=k_1+1+k_2\le \lambda+2+\mu<\lambda+2+(r-1)\mu=n,$$ or
$$n=k_1+1+k_2\le (r-1)\mu+1+\mu<(r-1)\mu+\lambda+2=n,$$ a contradiction.

Further, we have $T^{d}\simeq L/C_L(K)\le \aut(K)$ for some $0\le d\le m-1$. Moreover, $\max(\pi(T))\le t-1$ and $T$ has a nontrivial permutation representation on $K$ of degree $r_1$ not exceeding $r-1$ when $L\ne C_L(K)$.

Since $\aut(K)\le \sym_{t-1}$, then for any $p\in \pi(|T|)$ the number $(|T|_p)^d$ divides $(t-1)!$. On the other hand, $((t-1)!)_p<p^{\frac{t-1}{p-1}}$ (see, e.g., \cite[Exercise 2.6.8]{DM}), which implies $d<t-1$. Therefore, for $m\ge t$ for some $B'\in\mathcal{B}$ the group $C_L(K)$ contains the pointwise stabilizer $L_{Y}$ of the set $Y=X_2-\overline{B'}$ in $L$, and $L_Y\simeq T$.

Suppose that $m<t$. Then $k_1+1>t> r-1$ and $k_2<(t-1)(k_1+1)$. For $k_1+1>r_{\min}$, where $r_{\min}$ denotes the degree of the minimal permutation representation of $T$, one of the following possibilities holds for $(T, r_{\min})$ (see \cite{Mazurov1993} and \cite{Atlas}):
\begin{itemize}
\item $(T,r_{\min})=(L_2(5),5), (L_2(7),7), (L_2(9),6),(L_2(11),11)$ or $(L_4(2),8)$, where $k_1+1=6, 8, 10, 12$ or $15$, respectively;

\item $(T,r_{\min})=(Sp_{2l}(2),2^{2l-1}-2^{l-1})$, $l\ge 3$, where $k_1+1=2^{2l-1}+2^{l-1}$;

\item $(T,r_{\min})=(U_3(5),50)$, where $k_1+1=126$;

\item $(T,r_{\min})=(HiS,100)$, where $k_1+1=176$.
\end{itemize}
Hence if $L\ne C_L(K)$ we obtain $r_{\min}\le r-1<k_1+1 \le 3r$ and $k_2<3r(t-1)$, which implies
$n< 3rt$, a contradiction.

Hence $C_L(K)=L$ or $m\ge t$, so we may assume that $L_{Y}\le C_L(K)$.
As $C_L(K)=L_F$, it follows that
 there is an involution $h$ in $L_{Y}$ that fixes pointwise both the set of $m-1$ blocks from $\mathcal{B}$, including the block $B$, and the set $\overline{B}$.
Then $|\fixs(h)|\ge k_2$ and by Lemma \ref{Prel:Lemma5} we have either $k_2\le \mu$ and $(T,m_1)= (L_2(8),28)$,
or $k_2\le \lambda$, every non-single-point $\langle h\rangle$-orbit is an edge and, in particular, for some block $B'\in \mathcal{B}-\{B\}$ the set $X_2\cap \overline{B'}$ contains an edge (and therefore induces a clique if $(T,m_1)\neq (L_2(8),28)$).
In the second situation, for $(T,m_1)\neq (L_2(8),28)$, the relation \eqref{x0} leads to the contradiction
$$n=k_1+1+k_2\le \mu+1+\lambda<(r-1)\mu+\lambda+2=n.$$
Let us consider the first situation. As was proved earlier, we may assume that $\lambda_1< k_1-1$. If the relation \eqref{x2} is satisfied, then
$$n=k_1+1+k_2\le (r-1)\mu+1+\mu<(r-1)\mu+\lambda+2=n,$$ a contradiction.
Thus for some block $B'\in \mathcal{B}-\{B\}$ the set $X_2\cap \overline{B'}$ contains an edge
and, since the primitive group $T$ acts on $28$ points as a group of rank $4$ with three non-single-point suborbits of length $9$ (see, e.g., \cite[Lemma 2.10]{GLP}),
we obtain $9\le\lambda_1$ and $\mu\ge 9$. Moreover, $28$ divides $n$, so $t$ is odd and $t\ge 13$. Hence $\lambda-\mu=(t^2-3)t\ge 166\cdot13$ and
$$n=k_1+1+k_2\le \mu+18+1+\lambda <(r-1)\mu+\lambda+2=n,$$ which is impossible.

\underline{The case $(ii)$} is ruled out as by \cite[Theorem 1.3]{DGLPP} the group $H$ can be almost simple only for $m\le 73$ and $n$ cannot be a perfect square in any of the rest cases.

\end{proof}

\subsection{Case of  affine groups $H$}

Next we prove Theorems \ref{ImprimitiveCase:Thm2} and \ref{ImprimitiveCase:Thm3}, which describe the pair $(\Gamma,G)$ in the cases where $H$ is affine and $G^{\mathcal{B}}$ is almost simple or affine, respectively.
\begin{theorem}\label{ImprimitiveCase:Thm2}
If the group $H$ is affine of degree $p^l$, where $p$ is a prime, and the group $G^{\mathcal{B}}$ is almost simple, then $K<S$, $C_G(S)\le W$, $l$ is even, and either
\begin{enumerate}
 \item $(\soc(G^{\mathcal{B}}), m) = (L_2(8), 9)$ and $N_a\le S$, or
 \item $(\soc(G^{\mathcal{B}}),m)=(L_5(3),121)$,
 $t=12$, $r=11$, $p=13$, $l=2$, $\lambda_1=17$, $\lambda_2=3369$, and $M^B\le GL_2(13)$ or $M^B\le \Gamma L_1(13^2)$.

\end{enumerate}
\end{theorem}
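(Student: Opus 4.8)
The plan is to combine the identity $|\mathcal B|\cdot|B|=n$, i.e. $m\cdot p^l=(t^2-1)^2$, with the classification of $2$-transitive groups (Theorem~\ref{2tra}) applied to $\tG^{\mathcal B}=G^{\mathcal B}$ of degree $m$ and socle $R:=\soc(G^{\mathcal B})$, together with the combinatorial constraints on $\Gamma$ recorded in Lemma~\ref{ImprimitiveCase:Lem1} and the Hoffman bounds \eqref{HofmanClique}, \eqref{HofmanCoclique}. Recall that $r\mid t-1$, $(6,r)=1$ and $r\ge 2$ force $t-1$ to have a prime divisor exceeding $3$, so in particular $t\ge 6$.

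First I would settle the structural assertions. Since $K$ acts trivially on $\Sigma$, the quotient $S/K$ is precisely the kernel of the action of $\tG=G^{\Sigma}$ on $\mathcal B$; were $S=K$, this action would be faithful and $\tG\cong G^{\mathcal B}$ would be $2$-transitive, contradicting $\rk(\tG)=3$, so $K<S$. Moreover $S/K$ cannot act trivially on $B$ (otherwise, fixing every block of $\mathcal B$ pointwise, it acts trivially on $\Sigma=\bigcup\mathcal B$, forcing $S=K$), so $(S/K)^B$ is a nontrivial normal subgroup of the primitive group $H$, hence is transitive on $B$ and contains $\soc(H)$, which is elementary abelian of order $p^l$; combined with the fact that $K$ is regular on each fibre this gives that $S$ is transitive on $\overline{B}$, and likewise on every $\overline{B'}$. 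For $C_G(S)\le W$: the image of $C_G(S)\trianglelefteq G$ in the almost simple group $G^{\mathcal B}$ is a normal subgroup, hence is trivial — whence $C_G(S)\le S\le W$ — or contains $R$; in the latter case $C_G(S)$ is transitive on $\mathcal B$ while $C_W(S)=C_G(S)\cap W$, centralising the transitive group $S^{\overline B}$, induces a semiregular group on $\overline B$, and a counting estimate using $C_G(S)\le C_G(K)$ and Lemma~\ref{Prel:Lemma3} bounds $|C_G(S)|$ by $v$, which together with the transitivity of $\langle C_G(S),S\rangle$ on $\cV(\Gamma)$ leads to a contradiction.

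The core of the argument is the reduction of $(R,m)$. For each family in Theorem~\ref{2tra}$(1)$ I would impose that $(t^2-1)^2/m=p^l$ is a prime power and analyse the resulting Diophantine equation with the help of Lemma~\ref{Prel:LemmaNT} and the Zsigmondy corollary Lemma~\ref{Prel:LemmaZsigmCor}; since by Theorem~\ref{ImprimitiveCase:Thm1} the group $H$ is affine, the subdegrees of a point stabiliser of $H$ on $B$ — hence the admissible residues of $\lambda_1$ — are restricted, and feeding these into the identity $k_1(\lambda-\lambda_1)=(k_1+1)(m-1)(\lambda-\lambda_2)$ coming from \eqref{eq1}, into the divisibilities of Lemma~\ref{ImprimitiveCase:Lem1}, and into $\lambda>4(t+1)^2$ and the Hoffman bounds \eqref{HofmanClique}, \eqref{HofmanCoclique} produces sharp congruences on $t$ and $r$. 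I expect the alternating degrees and the infinite families $(L_d(q),(q^d-1)/(q-1))$ and $(Sp_{2d}(2),2^{2d-1}\pm 2^{d-1})$ to be the main obstacle, since there $m$ may be large, the cyclotomic factorisations of $(t^2-1)^2/m$ are delicate, and one must also exploit that $\overline{B}$ is an $r$-cover of $K_{p^l}$; the sporadic and small-rank cases fall quickly because the prime-power condition on $(t^2-1)^2/m$ is very restrictive. The outcome is that only $(R,m)=(L_2(8),9)$ and $(R,m)=(L_5(3),121)$ survive, and in both $m$ is a perfect square coprime to $p$, whence $l$ is even.

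Finally I would pin down the two survivors. In the case $(L_5(3),121)$, from $121\,p^l=(t^2-1)^2$ and $\gcd(t-1,t+1)\mid 2$ one gets $t^2-1=11\,p^{l/2}$ and then $\{t-1,t+1\}=\{11,13\}$, so $t=12$, $p=13$, $l=2$; the conditions $r\mid 11$, $(6,r)=1$, $r\ge 2$ give $r=11$, all parameters of $\Gamma$ become explicit, and Lemma~\ref{ImprimitiveCase:Lem1} then forces $\lambda_1=17$ and $\lambda_2=3369$ (after checking that case (i) of that lemma is impossible here). Since $H$ is a $2$-transitive affine group of degree $13^2$ and $13^2$ is not among the exceptional or extraspecial degrees of Theorem~\ref{2tra}$(2)$, a point stabiliser $H_0$ of $H$ satisfies $SL_2(13)\trianglelefteq H_0$ or $H_0\le\Gamma L_1(13^2)$; as $M=G_{\{F\}}$ fixes the block $B$ and $M^B=G_a^B$ lies in a point stabiliser of $H$, this yields $M^B\le GL_2(13)$ or $M^B\le\Gamma L_1(13^2)$. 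In the case $(L_2(8),9)$, from $9\,p^l=(t^2-1)^2$ one similarly obtains $t^2-1=3\,p^{l/2}$ with $l$ even; the remaining assertion $N_a\le S$ follows since $N_a=N\cap G_a$ fixes $B$ pointwise and fixes $a\in\overline{B}$, so $N_a$ induces on $\mathcal B$ a subgroup of a point stabiliser of the degree-$9$ group $G^{\mathcal B}$, which a comparison of orders — using that $|N_a|$ divides $|G_a|$ and the prime restrictions of Lemma~\ref{Prel:Lemma3} — forces to be trivial.
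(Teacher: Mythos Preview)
Your argument for $K<S$ is incorrect. From $S=K$ you only get that $G^{\Sigma}$ acts faithfully on $\mathcal B$, i.e. $G^{\Sigma}\cong G^{\mathcal B}$ as abstract groups; but $G^{\mathcal B}$ is $2$-transitive on the $m$-element set $\mathcal B$, not on $\Sigma$, so there is no contradiction with $\rk(G^{\Sigma})=3$. The paper in fact treats the case $S=K$ seriously: it invokes \cite[Theorem~1.3]{DGLPP} (classifying quasi-primitive imprimitive rank~$3$ groups), which forces $Y\simeq L_d(q)$, $W^B\simeq AGL_1(p)$ with $l=1$ and $p\mid q-1$, and then derives a contradiction via the key observation (Claim~\ref{Statement1}) that $N_a$ must act \emph{intransitively} on $\mathcal B-\{B\}$. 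This claim --- which you never formulate --- is the linchpin of the entire proof: it is also what gives $C_G(S)\le W$ (Claim~\ref{Statement7}) and what yields $N_a\le S_a$ in the $(L_2(8),9)$ case (because $L_2(8)$ is $3$-transitive on $9$ points, so a nontrivial normal subgroup of $W^{\mathcal B}$ would force $N_a$ transitive on $\mathcal B-\{B\}$). Your order-comparison idea for $N_a\le S$ and your ``counting estimate'' for $C_G(S)\le W$ do not supply this mechanism.

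The core reduction is also missing its decisive idea. For even $l$ the paper does not primarily use subdegree congruences or Hoffman bounds (those belong to Theorem~\ref{ImprimitiveCase:Thm1}); instead it observes that $m=(t^2-1)^2/p^l$ is then a perfect square, which for $Y=L_d(q)$ with $d\ge 3$ means $(q^d-1)/(q-1)$ is a square, and the Nagell--Ljunggren theorem \cite{Ljunggren} forces $(q,d)\in\{(7,4),(3,5)\}$, immediately isolating the $(L_5(3),121)$ case. For odd $l$ the argument is a detailed case analysis of the structure of $M^B$ (through the list (a)--(h) for affine $2$-transitive point stabilisers) against each family for $Y$, using that $W^{\mathcal B}/X_1\simeq M^B/X_2$ where $X_1,X_2$ come from $S_aN_a$; this is how alternating, symplectic, Suzuki, Ree and unitary $Y$ are eliminated. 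Your proposal names none of these ingredients and instead points to Hoffman bounds and generic Diophantine constraints, which are not what carries the proof here.
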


\begin{proof}
Suppose that $H$ is an affine group and $G^{\mathcal{B}}$ is almost simple. Put $Y=\soc(G^{\mathcal{B}})$. Then $|B|=k_1+1=p^{l}$ for a prime $p$ and $T$ is an elementary abelian group of order $p^l$.

Obviously, the pair $(|B|;|\mathcal{B}|)=(k_1+1;m)$ satisfies one of the following three conditions.
\begin{itemize}
\item[(C1)] $p=2$. Then $t$ is odd, $(t-1,t+1)=2$ and since by condition $t-1$ cannot be a power of $2$, then $(t-1)_{2'}>1$ and $(t^2-1)^2_{2'}$ divides $m$, in particular, $r^2$ divides $m$.
In this case
\begin{itemize}
\item[(C1.1)] $(r,k_1+1)=1$, $k_1+1$ divides $(2(t+1)_2)^2$ or
\item[(C1.2)] $(r,k_1+1)=1$, $k_1+1$ divides $(2(t-1)_2)^2$;
\end{itemize}
\item[(C2)] $p>2$, $(r,k_1+1)=1$ and $k_1+1$ divides $(t+1)^2_{2'}$, and $m$ is divisible by $(t-1)^2(t+1)^2_2$;
\item[(C3)] $p>2$ and $k_1+1$ divides $(t-1)^2_{2'}$, and $m$ is divisible by $(t+1)^2(t-1)^2_2$.
\end{itemize}
\medskip

Further, in view of Theorem \ref{2tra}, one of the following possibilities is admissible for $(Y, m)$:
\begin{itemize}
\item[(i)] $(Y,m)=(\alt_m,m)$, $m\ge 5$;

\item[(ii)] $(Y,m)=(Sp_{2d}(2),2^{2d-1}\pm 2^{d-1})$, $d\ge 3$;

\item[(iii)] $(Y,m)=(L_d(q),\frac{q^d-1}{q-1})$, $q$ is a power of a prime $p_0$, $d\ge 2$, $(d,q)\ne (2,2), (2,3)$;

\item[(iv)] $(Y,m)=(Sz(q),q^2+1)$, $q=2^{2e+1}\ge 8$;

 \item[(v)] $(Y,m)=(U_3(q),q^3+1)$, $q\ge 3$ is a power of a prime $p_0$;

 \item[(vi)] $(Y,m)=({^2G}_2(q),q^3+1)$, $q=3^{2e+1}\ge 27$;
 \item[(vii)] $(Y,m)=(M_m,m)$, $m\in\{11,12,22,23,24\}$;

 \item[(viii)] $(Y,m)=(L_2(11),11), (M_{11},12), (\alt_{7},15), (L_2(8),28),(HiS, 176)$, or $(Co_3,276)$.

\end{itemize}
\medskip

The  \underline{cases $(vii)$ and $(viii)$ for $Y$} are immediately ruled out, as straightforward enumeration shows that $m$ must be even, $p>2$ and $t\le 15$, while $n/m$ cannot be a prime power.

Therefore $(Y,m)\ne(L_2(8),28)$ and hence the group $Y$ is $2$-transitive on $\mathcal{B}$.
Taking into account that $k_1+1<4(t+1)^2<\lambda$, we obtain $\lambda<k_2$ (otherwise $2\lambda>(t^2-1)^2$, which is impossible) and by Lemma \ref{ImprimitiveCase:Lem1} $k_1>1$ and $(m-1,k_1)\ne 1$. Hence $G_{a,(X_2)}=1$ and $G_a^{X_2}\simeq G_a$.

\smallskip

\begin{stat}\label{Statement1}
$N_a$ acts intransitively on ${\mathcal{B}}-\{B\}$, $|\fix_F(N_a)|=|N_K(N_a)|$
divides $r$, and if $|\fix_F(N_a)|=1$, then $\lambda_1=k_1-1$.
\end{stat}
\begin{proof}Suppose that $N_a$ acts transitively on $\mathcal{B}-\{B\}$.
Since $N_a$ is normal in $G_a$, then the length of each $N_a$-orbit on $X_2$ is $(m-1)p^{\alpha}$ for some $0\le \alpha\le l$.
Hence $\lambda-\lambda_1=(m-1)p^{\alpha}f$ for some integer $f$. Then by Lemma \ref{Prel:Lemma6}
$$(p^l-1)(m-1)p^{\alpha}f=(m-1)p^l(\lambda-\lambda_2),$$
whence $p^lf-f=p^{l-\alpha}(\lambda-\lambda_2)$ and $p^{l-\alpha}$ divides $f$.
This means that $\lambda-\lambda_1=k_2$, a contradiction.

Since $N=K:N_a$, we have $|\fix_F(N_a)|=|N_K(N_a)|$. Further, for any neighbour $x\in \overline{B}-F$ of $b\in X_1$ and for any element $g\in N_a$, we have $x^g\in F(x)\cap [b]$, which implies $x^g=x$. Therefore, if $|\fix_F(N_a)|=1$, then $\lambda_1=k_1-1$.
\end{proof}
\medskip

\begin{stat}\label{Statement2}If $\lambda_1=k_1-1$, then $k_1+1$ divides $(\lambda+2,n)=\frac{(t-1)^2}{r}(t+1,r-1)$.

\end{stat}
\begin{proof}
By Lemma \ref{ImprimitiveCase:Lem1}, $k_1+1$ divides $(\lambda+2,n)$ when $\lambda_1=k_1-1$.
It remains to note that $(\lambda+2,n)=\frac{(t-1)^2}{r}(t+1,r-1)$.
\end{proof}
\medskip

\begin{stat}\label{Statement3}
$(m-1,k_1)$ divides $(k_1,k_2)$ and $(t^2(t^2-2),k_2)$.
\end{stat}
\begin{proof} Obvious.
\end{proof}

By Theorem \ref{2tra}, for $W^B=\soc(W^B):M^B$ and $k_1+1=p^l$ one of the following possibilities holds:
\begin{itemize}
\item[(a)] linear $l=cd$, $d\ge 2$ and $SL_d(p^c)\trianglelefteq M^B\le \GamL_d(p^c)$;

\item[(b)] symplectic $l=cd$, $d\ge 4$, $d$ even and $Sp_d(p^c)\trianglelefteq M^B\le Z_{p^c-1}\circ {\Gamma}Sp_d(p^c)$;

\item[(c)] $G_2$ of type $l=6c$, $p=2$ and $G_2(2^c)'\trianglelefteq M^B\le Z_{2^c-1}\circ \aut(G_2(2^c))$;
\item[(d)] one-dimensional $M^B\le \GamL_1(p^l)$;

\item[(e)] exceptional $p^l\in\{9^2,11^2,19^2,29^2,59^2\}$ and $SL_2(5)\trianglelefteq M^B$, or
$p^l=2^4$ and $M^B \trianglerighteq \alt_6$ or $\alt_7$, or $p^l=3^6$ and $SL_2(13)\trianglelefteq M^B$;

\item[(h)] extraspecial $p^l\in \{5^2,7^2,11^2,23^2\}$ and $SL_2(3)\trianglelefteq M^B$,
 or $p^l=3^4$, $R=D_8\circ Q_8\trianglelefteq M^B$, $M^B/R\le \sym_5$ and $5$ divides $|M^B|$.

\end{itemize}
Note that in the cases $(b), (c), (e)$ or $(h)$ for $M^B$ the number $l$ is always even.

\begin{stat}\label{Statement4}If $l$ is even and  the case $(iii)$ with $d\ge 3$ holds for $Y$, then $t=12$, $m=121$, $r=11$, $p=13$, $l=2$, $\lambda_1=17$, $\lambda_2=3369$, and $Y=L_5(3)$, where $M^B\le GL_2(13)$ or $M^B\le \Gamma L_1(13^2)$.

\end{stat}

\begin{proof}
Note that if $l$ is even (as in cases $(b), (c), (e)$, or $(h)$ for $M^B$), then $\frac{q^d-1}{q-1}=\frac{(t^2-1)^2}{p^l}$ is a perfect square.
But by  \cite{Ljunggren}  the Nagell--Ljunggren      equation $\frac{x^i-1}{x-1}=y^2$, where $i>2$, is unsolvable for $|x|>1$, except for the cases when
(1) $x=7$, $i=4$ or (2) $x=3$, $i=5$.
Hence the pair $(Y,\frac{(t^2-1)}{p^{l/2}})$ is, respectively, $(L_4(7),20)$ or $(L_5(3),11)$.

If $\sqrt{m}=\frac{(t^2-1)}{p^{l/2}}=11$, then $p\ne 2$ and $t\le 12$.
Since $t-1$ is not a power of $2$ or $3$, we obtain $t=12$, $p=13$ and $l=2$, which by Lemma \ref{ImprimitiveCase:Lem1} implies
$\lambda_1=17$, $\lambda_2=3369$.
Then $M^B\le GL_2(13)$ or $M^B\le \Gamma L_1(13^2)$.

Let $\sqrt{m}=\frac{(t^2-1)}{p^{l/2}}=20$.
If $p$ divides $20$, then $t^2-1=2^i5^j$ for some $i,j$. Since $t-1$ is not a power of 2, then either $p=5$ and $t-1=5^{l/2}$, or $p=2$ and $(t-1)/2=5^{l/2}$ and $4(t+1)=2^i$. The first case leads to the contradiction $t+1=2^i=5^j+2$.
In the second, we have $t-1=2\cdot 5^{j}=2(2^{i-1}-1)$, which by Lemma \ref{Prel:LemmaZsigmCor} yields $j=1$, a contradiction.

Therefore $p^{l/2}$ divides one of $t-1$ and $t+1$, whence $t\le 21$. Straightforward enumeration shows that there is no admissible $t$.

\end{proof}

\medskip
\begin{stat}\label{Statement5}
Suppose that $l$ is even. Then cases $(iv, v, vi)$ for $Y$ are impossible and either $(Y,m)=(L_2(8),9)$ and $N_a\le S_a$ or $m-1$ is not a prime power.
\end{stat}
\begin{proof}
Let $l$ be even. Then $m=\frac{(t^2-1)^2}{p^l}=y^2$ is a perfect square. If $m-1$ is a power of a prime $p_0$,
then $p_0=2$ and $m=9$.
Hence we have either $(Y,m)=(L_2(8),9)$ or $m-1$ is not a prime power. This rules out the cases $(iv, v, vi)$ for $Y$.
In the first case, $Y$ acts $3$-transitively on $\mathcal{B}$, so $N_a\le S_a$ by Claim \ref{Statement1}.
\end{proof}
\begin{stat}\label{Statement6}

If $l$ is odd, then $p$ divides $m$, and $p^l<m$ for all $(p;m)\ne (2; (t-1)^2/2)$.
\end{stat}
\begin{proof}
Let $l$ be odd. Since $mp^l=(t^2-1)^2$ is a perfect square, $p$ divides $m$ and $(p,m-1)=1$. Moreover, $t\equiv\pm 1 \pmod {p_1}$ for all $p_1\in\pi(m)=\pi(n)$ and $\pi(r)\subseteq\pi(m)\cap \pi(t-1)$.

\medskip
\begin{itemize}
 \item[1)] Let $p>2$. Then $p^l$ divides one of $(t+1)^2$ or $(t-1)^2$.

\begin{itemize}
 \item[1.1)] If $p^l$ divides $(t-1)^2$, then $p^l<m=(t+1)^2\frac{(t-1)^2}{p^l}$.

\item[1.2)] If $p^l$ divides $(t+1)^2$, then again $p^l<m=p(t-1)^2\frac{(t+1)^2}{p^{l+1}}$,
since obviously $p(t^2-2t+1)>t^2+2t+1$ for $t\ge 6$.
\end{itemize}
\item[2)] Let $p=2$. Then $p^l$ divides one of $4(t+1)^2$ or $4(t-1)^2$.
\begin{itemize}
 \item[2.1)] If $p^l$ divides $4(t-1)^2$, then $p^l<m=r^2\frac{(t+1)^2}{4}\frac{4(t-1)^2}{r^2p^l}$.

\item[2.2)] If $p^l$ divides $4(t+1)^2$, then either $p^l=2(t+1)^2$, or $p^l\le(t+1)^2/2$ and $p^l<m=2(t-1)^2\frac{(t+1)^2}{2p^l}$.
\end{itemize}
\end{itemize}
\end{proof}

Next we consider the cases $K=S$ and $K<S$ separately.

\medskip
\texttt{1.} If $K=S$ (that is, $S^{\Sigma}=1$ and $\tG$ is quasi-primitive), then according to \cite[Theorem 1.3]{DGLPP}
either $Y\simeq L_2(q)$, $T\simeq Z_2$ and $q+1=m$,
or $Y\simeq L_d(q)$, $W^B\simeq AGL_1(p^l)$ and $\frac{q^d-1}{q-1}=m$, $d\ge 3$, $l=1$ and $(k_1+1,q-1)=k_1+1$.
The condition $(m-1,k_1)\ne 1$ excludes the first case. In the second case we have $m-1=q\frac{q^{d-1}-1}{q-1}$, $k_1=p-1$
and $p$ divides $m$.

If $p=2$, then $M=N$ and $N_a$ acts transitively on $X_2$, which implies $\lambda>k_2$, a contradiction.
Hence, $p>2$ and since $(t^2-1)^2=pm$, then by Lemma \ref{Prel:LemmaNT} $1\ne (m)_p=(d)_p=p^{2i+1}$ for some $i\in \mathbb{N}$.

\smallskip

\texttt{1.1.} If $C_G(K)\le K$, then $Y\le \aut(K)$ has a faithful permutation representation of degree at most $r-1<(t-1)_{2'}\le m$,
a contradiction with \cite{Mazurov1993}.

\smallskip

\texttt{1.2.} If $C_G(K)\not\le K$, then $\soc(G/S)\le C_G(K)/K$ acts 2-transitively on $\mathcal{B}$ and $C_W(K)$ acts transitively on $\mathcal{B}-\{B\}$.

Let $R$ be the full preimage of $Y$ in $C_G(K)$. Then $R^{\infty}/Z(R^{\infty})=R^{\infty}/(R^{\infty}\cap K)\simeq R^{\infty}K/K=R/K$, $R=R^{\infty}K$ and $R^{\infty}$ is a quasi-simple covering group for $Y$. Therefore
either $R^{\infty}=R'\simeq Y$ and $R=R'\times K$, or $R/Z(R^{\infty})=Y\times (K/Z(R^{\infty}))$.
But then the group ${R^{\infty}}\cap W$  contains a nonsolvable subgroup of the form $E_{q^{d-1}}.SL_{d-1}(q)$ (see, for example, \cite{Mazurov1993}), which acts transitively on $\mathcal{B}-\{B\}$, and since $W^B\le AGL_1(p)$ and $p$ divides $q-1$, it is contained in $N$.
From this, by \cite[5.20]{Asch}, we get that the group
$N_a$ also acts transitively on $\mathcal{B}-\{B\}$, a contradiction with Claim \ref{Statement1}.

\smallskip

\texttt{2.} Let $K<S$. Then $S\not\le N$, $Q:=\soc(W/N)\le SN/N\simeq S/N\cap S$ and $Q=C_{W/N}(Q)$.
Hence $Q=O_p(SN/N)\simeq T$, all $S$-orbits on $\Sigma$ have length $k_1+1$ and $W=G_aS$.
On the other hand, $W/S=MS/S=G_aS/S\simeq G_a/S_a$ and $M/N=G_aN/N\simeq G_a/N_a\simeq {M}^B\le \aut(Q)$.
\medskip

\begin{stat}\label{Statement7}
 $C_G(S)\le W$.
\end{stat}
 \begin{proof} Suppose $C_G(S)\not\le W$. Then
$C_G(S)S/S\trianglelefteq G/S$ and $\soc(G/S)\le C_G(S)S/S\simeq C_G(S)/Z(S)$.
Hence $C_G(S)$ acts $2$-transitively on ${\mathcal{B}}$ and
so  $C_W(S)$ (that is, the stabilizer of $B$ in $C_G(S)$) acts transitively on ${\mathcal{B}}-\{B\}$.

 Since $C_G(S)$ acts $2$-transitively on $\mathcal{B}$, then, for $B'\in \mathcal{B}-\{B\}$, the stabilizer $U$ of $B'$ in $C_G(S)$ acts
transitively on $\mathcal{B}-\{B'\}$ and the $U$-orbit on $\Sigma$ containing $F(a)$ is entirely contained in $\fixs(S_a)$.
But all $S_a$-orbits on $\Sigma_2$ have the same length.
Therefore $S_a=1$ and $S_{\{F\}}=K$.

 Hence $W^{\mathcal{B}}\simeq G_a$.
On the other hand, $C_W(S)N/N$ is normal in $M/N$ and centralizes $SN/N$.
This implies that either $C_W(S)\le N$ or $C_W(S)/C_N(S)\simeq C_W(S)N/N=Q=SN/N$.

Let $\widetilde{Y}$ be the full preimage of $Y$ in $G$. Then $\widetilde{Y}\le SC_G(S)$ and $G_a/\widetilde{Y}_a\le \out(Y)$. But $\widetilde{Y}_a\le C_W(S)S$ and therefore $\widetilde{Y}_a$ centralizes the group $SN/N$,
which implies $\widetilde{Y}_a\le N_a$. But then $N_a$ acts transitively on ${\mathcal{B}}-\{B\}$,
which is impossible by Claim \ref{Statement1}.
\end{proof}

Thus by Claim \ref{Statement7} we have $C_G(S)\le W$. Then $S\ge C_G(S)=Z(S)$ and $Z(S)$ centralizes $SN/N$. Hence either 
$Z(S)\le K$, or $Z(S)/C_N(S)\simeq Z(S)N/N=Q=SN/N$, $Z(S)N\ge S=(N\cap S).Q$ (so that $S_a\le N_a$ and $N\cap S=K:S_a$) and $K< C_G(K)$.

\medskip

In \underline{case (i) for $Y$} we have $\alt_{m-1}\le W^{\mathcal{B}}\le \sym_{m-1}$ and $W^{\mathcal{B}}$ acts $2$-transitively on ${\mathcal{B}}-\{B\}$, which implies $N_a\le S_a$ (otherwise $N_a$ would act transitively on ${\mathcal{B}}-\{B\}$, which  is impossible by Claim \ref{Statement1}).
Then $M^{B}\simeq G_a/N_a$ contains a normal subgroup $U$ isomorphic to $S_a/N_a$, so   $M^B/U$ is isomorphic to $W^{\mathcal{B}}\simeq G_a/S_a$.
Let $D=M^B$. Then $\alt_{m-1}\le D/U\le \sym_{m-1}$ and therefore $D$ is non-solvable.
Moreover, $(m-1)!$ divides $|M^B|\cdot 2$ divides $(k_1+1)!\cdot 2$. Hence, by Lemma \ref{ImprimitiveCase:Lem1} $m-1<k_1+1$ and, since $k_1+1$ divides $(t^2-1)^2$, we obtain $k_1+1>t^2+1$. Hence, in view of the conditions $(C1)$--$(C3)$, we have either $p>2$, $k_1+1=(t+1)^2$ and $m=(t-1)^2$, or $p=2$, $(t-1,4)=2$, $k_1+1$ divides $4(t+1)_2^2=4(t+1)^2$ and $(m)_{2'}=(t-1)^2/4$. If one of the cases $(a)$, $(b)$  or $(c)$ holds for $M^B$ (and correspondingly $D^{\infty}\simeq SL_d(p^c)$, $Sp_d(p^c)$ or $G_2(2^c)'$), then
$D/D^{\infty}$ is solvable, so $D^{\infty}\not\le U$ and $D^{\infty}/D^{\infty}\cap U\simeq \alt_{m-1}$.
But then the degree of the minimal permutation representation of $D^{\infty}/D^{\infty}\cap U\simeq \alt_{m-1}$
is equal to $m-1$. By \cite[Theorems 1, 2]{Mazurov1993} and \cite[Theorem 1]{Vasil'ev1996} we obtain $m-1=5,6,8$, which   implies $t=4$ or $7$, a contradiction.
Suppose the case $(e)$ or $(h)$ holds for $M^B$. If $t+1$ is a prime or $(t+1)^2=3^4, 3^6$, then $D\le GL_l(p)$ and
$\alt_{(t-1)^2-1}\le D/U$, which implies $m-1=5,6$, a contradiction.
If $k_1+1=2^4$, then $m-1\le 7$, which contradicts the fact that $(m)_{2'}=(t-1)^2/4$.

\smallskip

In \underline{case (ii) for $Y$} the number $t$ is odd, $\out(Y)=1$
and $W^{\mathcal{B}}\simeq PSO^{\pm}_{2d}(2)$, $d\ge 3$. Then $(W^{\mathcal{B}})'\simeq P\Omega^{\pm}_{2d}(2)$ is a non-abelian simple subgroup of index $2$ in $W^{\mathcal{B}}$.
Since $S_a N_a\trianglelefteq G_a$, $W^{\mathcal{B}}\simeq G_a/S_a$ contains a normal subgroup $S_aN_a/S_a$ isomorphic to $N_a/(S_a\cap N_a)$. 
By Claim \ref{Statement1} $N_a$ acts intransitively on ${\mathcal{B}}-\{B\}$, so $(W^{\mathcal{B}})'\not\le (N_a)^{{\mathcal{B}}}$, and hence $|G_a:N_aS_a|>2$, which implies $N_a\le S_a$.
On the other hand, $D:=M^{B}\simeq G_a/N_a$ contains a normal subgroup $U\simeq S_a/N_a$, so that $D/U$ is isomorphic to $W^{\mathcal{B}}$. This immediately excludes the case $(d)$ for $M^{B}$. If one of the cases $(a), (b)$ or $(c)$ holds for $M^B$ (and correspondingly $D^{\infty}\simeq SL_s(q)$, $Sp_s(q)$ or $G_2(q)'$, where $q=p^c$ and $sc=l$), then
$D/D^{\infty}$ is solvable, so $D^{\infty}\not\le U$ and $D^{\infty}/(D^{\infty}\cap U)\simeq P\Omega^{\pm}_{2d}(2)$. Then (see, for example, \cite[Ch. 17]{Asch})
$D^{\infty}/(D^{\infty}\cap U)\simeq P\Omega^{+}_{6}(2)\simeq L_4(2)$, $s=4$ and $q=2$, that is, $k_1+1=16$
and $m=36$, which contradicts the condition $t\ge 6$.
 Cases $(e)$ and $(h)$ for $M^B$ are excluded similarly.

\smallskip

In \underline{case (iii) for $Y$ with $d\ge 3$} we have $m=\frac{q^d-1}{q-1}$,
$Y\simeq L_d(q)$ and
$W^{\mathcal{B}}$ contains a normal non-solvable subgroup $X:=(W^{\mathcal{B}})^{\infty}\simeq ASL_{d-1}(q)$ (see, e.g., \cite{Mazurov1993}), which acts transitively on $\mathcal{B}-\{B\}$.

By Claim \ref{Statement4} in cases $(b), (c), (e)$ or $(h)$ for $M^B$, and in cases $(a),(d)$ with even $l$ we have
$t=12$, $m=121$, $r=11$, $p=13$, $l=2$, $\lambda_1=17$, $\lambda_2=3369$ and $Y=L_5(3)$, where $M^B\le GL_2(13)$ or $M^B\le \Gamma L_1(13^2)$.

Let $t\ne 12$. Then $l$ is odd.

Suppose that $N_a\le S_a$. Then $D:=M^{B}\simeq G_a/N_a$ contains a normal subgroup $U\simeq S_a/N_a$, so that $D/U$ is isomorphic to $W^{\mathcal{B}}$. This immediately excludes the case $(d)$ for $M^{B}$. In case $(a)$ for $M^B$ with odd $l=c\tilde{d}\ge 3$ we have $D^{\infty}\simeq SL_{\tilde{d}}(p^c)$ and $ASL_{d-1}(q)\trianglelefteq W^{\mathcal{B}}$, a contradiction.

Further, since $S_a N_a\trianglelefteq G_a$, $W^{\mathcal{B}}\simeq G_a/S_a$ contains a normal subgroup $R:=(N_a)^{{\mathcal{B}}}$ isomorphic to the group $S_aN_a/S_a\simeq N_a/(S_a\cap N_a)$.
By Claim \ref{Statement1} $N_a$ acts intransitively on ${\mathcal{B}}-\{B\}$, so $X\not\le R$ and $S_aN_a\ne G_a$.

We have $N_a\not\le S_a$, that is, $R>1$. Further, $X=A:J$, where $A$ is a normal elementary abelian subgroup of order $q^{d-1}$ and $J\simeq SL_{d-1}(q)$. Moreover, on $\mathcal{B}-\{B\}$ there are exactly two $J$-orbits, say $J_1$ and $J_2$, of lengths $\frac{q^{d-1}-1}{q-1}$ and  $q^{d-1}-1$, respectively, so that $J$ acts $2$-transitively on the set of all $\frac{q^{d-1}-1}{q-1}$ $A$-orbits (of length $q$) on $\mathcal{B}-\{B\}$. Moreover, if $R\cap A>1$, then $A\le (R\cap A)^J\le R$.

If $R\cap X=1$, then $R$ centralizes $X$. But $C_{W^{\mathcal{B}}}(A)\cap Y\le A$, so $R\cap Y=1$ and $|R|$ divides $(d,q-1)e$. But then $J_1$ intersects each $R$-orbit in at most one point, hence the number of $R$-orbits is at least $\frac{q^{d-1}-1}{q-1}$, and each of them is contained in some $A$-orbit (since $|\fix_{\mathcal{B}-\{B\}}(A_1)|=q$ for some subgroup $A_1\le A$ of index $q$), and therefore its length divides $(q,e)$ and, consequently, $R\cap PGL_d(q)=1$.
But no nontrivial field automorphism of $Y$ centralizes $A$, so $R=1$, a contradiction.

This means $R\cap X>1$ and therefore $A\le R$.

Thus in the case $R\cap X>A$ we have $R\cap X\le A.Z(J)$.

Since $M^B\simeq G_a/N_a$ contains a normal subgroup $X_2$ isomorphic to the group $S_aN_a/N_a$,
then $$W^{\mathcal{B}}/R\simeq M^B/X_2=:L,$$
and   the group $L$ cannot be solvable except for the cases $(d;q)=(3;3),(3;2)$.

Suppose that $(d;q)\ne (3;3),(3;2)$.

The case $(d)$ for $M^B$ is immediately excluded due to the solvability of the group $M^B$.

In case $(a)$ for $M^B$ with odd $l=c\tilde{d}\ge 3$ we have $D:=({M^B})^{\infty}\simeq SL_{\tilde{d}}(p^c)\not\le X_2$, $D$ is non-solvable, $DX_2/X_2$ is isomorphic to a non-identity quotient group of $SL_{\tilde{d}}(p^c)$, and $L$ is isomorphic to a section of $\Gamma L_{\tilde{d}}(p^c)$.

On the other hand, $XR/R$ is isomorphic to a non-identity quotient group of $SL_{d-1}(q)$ and $W^{\mathcal{B}}/X\le Z_{(q-1)}.Z_e$.

Combining the above observations, we obtain
$$XR/R=(W^{\mathcal{B}}/R)^{\infty}\simeq L^{\infty}\simeq DX_2/X_2,$$
which yields $$L_{d-1}(q)\simeq L_{\tilde{d}}(p^c).$$
Therefore, this is one of the pairs $L_2(4)\simeq L_2(5)$ or $L_3(2)\simeq L_2(7)$ (see, for example, \cite[p. 254]{Asch}).
But it is impossible as $l=c\tilde{d}\ge 3$ and $d\ge 3$.

Then, for $(d;q)= (3;3),(3;2)$ we get $m=7$ or $13$, which implies
$p=m$. A contradiction with Claim \ref{Statement6}.

Finally, let $t=12$ and $l$ be odd. Then $p$ divides $m$, which yields $l=1$, and $p=11$ or $13$.
In any case, $M^B\le Z_{p-1}$, which  contradicts the fact $X\not\le R$.

\smallskip

Now we consider \underline{case (iii) with $d=2$ and cases (iv, v, vi) for $Y$}.

For even $l$, by Claim \ref{Statement5} we have $(Y,m)=(L_2(8),9)$ and $N_a\le S_a$.

Let $l$ be odd. Then in case $(a)$ for $M^B$ with $l=c\tilde{d}\ge 3$ we have $D:=({M^B})^{\infty}\simeq SL_{\tilde{d}}(p^c)$, and in case $(d)$ for $M^B$ we have $M^B\le \Gamma L_1(p^l)$ and $|M^B|$ divides $l(p^l-1)$.

Furthermore, $U\le W^{\mathcal{B}}\le N_{G^{\mathcal{B}}}(U)\le N_{Y}(U).Z_f.Z_e$ for $U\in \syl_{p_0}(Y)$, where $q=p_0^e$, $p_0$ is a prime, and $f$ is $(2,q-1)$, $(3,q+1)$, and $1$ in cases (iii), (v), and (iv,vi), respectively. In any of these cases, $N_Y(U)/U$ is a cyclic group of order dividing ${q^2-1}$.

Since $W^{\mathcal{B}}\simeq G_a/S_a$ contains a normal subgroup $X_1$ isomorphic to $S_aN_a/S_a$,
and $M^B\simeq G_a/N_a$ contains a normal subgroup $X_2$ isomorphic to $S_aN_a/N_a$,
we obtain $$W^{\mathcal{B}}/X_1\simeq M^B/X_2=:L.$$

By Claim \ref{Statement1}, $N$ is intransitive on $\mathcal{B}-\{B\}$, so $U\not\le X_1$ and $G_a\ne S_aN_a$.
This means that $L\ne 1$ and $L$ is solvable. Moreover, if $U\cap X_1=1$, then $X_1=1$.

\smallskip

\begin{itemize}

\item[$(1)$] Suppose $(Y,m)=(L_2(q),(q+1))$. Then $X_1=1$, so $N_a\le S_a$ and
$$N_Y(U)\le W^{\mathcal{B}}\simeq M^B/X_2=L\le N_Y(U).Z_2.Z_e.$$

Therefore, in case (a), we have $SL_{\tilde{d}}(p^c)\le X_2$ and the group
$M^B/X_2$ is isomorphic to a section of $\Gamma L_{\tilde{d}}(p^c)/SL_{\tilde{d}}(p^c)\simeq Z_{(p^c-1)}.Z_c$.

In case (d), $M^B/X_2$ is isomorphic to a section of $\Gamma L_1(p^l)\simeq Z_{(p^l-1)}.Z_l$.

But $N_Y(U)$ is a split extension of an elementary abelian group $U$ of order $q$ by a cyclic group of order $(q-1)/(2,q-1)$, which acts semiregularly on $U$.

Thus, in any case, $(m-1)(m-2)/(2,m-2)$ divides $(p^l-1)l$,
which obviously cannot hold for $k_1<m-1$.

Then by Claim \ref{Statement6}, $p=2$, $m$ is even, $q=m-1=(t-1)^2/2-1$, and $2^{l-1}=(t+1)^2$.
Hence $m+2t=(k_1+1)/4$ and the number $4q=k_1+1+8t-4$ divides $4k_1l$.
Since $k_1+1\ne 8t-4$, then $(k_1+1+8t-4,4k_1l)$ divides $(1+8t-4,4)l$.
But then $k_1+1=p^l\le l$, a contradiction.

\item[$(2)$] Suppose $(Y,m)=(Sz(q),q^2+1)$.

Then $X_1< U$, so either $X_1=1$ and $N_a\le S_a$, or $X_1=Z(U)$.
Both possibilities yield
$$N_Y(U)/X_1\le W^{\mathcal{B}}/X_1\simeq M^B/X_2=L\le (N_Y(U)/X_1).Z_e.$$

In case (a) we have $SL_{\tilde{d}}(p^c)\le X_2$ and the group
$M^B/X_2$ is isomorphic to a section of the group $\Gamma L_{\tilde{d}}(p^c)/SL_{\tilde{d}}(p^c)\simeq Z_{(p^c-1)}.Z_c$.

In case (d) the group $M^B/X_2$ is isomorphic to a section of the group $\Gamma L_1(p^l)\simeq Z_{(p^l-1)}.Z_l$.

But $UX_1/X_1$ is a non-cyclic $2$-group of order $q$ or $q^2$, which contradicts the fact that $N_Y(U)/X_1\le Z_{(p^l-1)}.Z_l$ in any case.

\item[$(3)$] Suppose $(Y,m)=(U_3(q),q^3+1)$ or $(Y,m)=(^2G_2(q),q^3+1)$.

Then $X_1< U$, so either $X_1=1$ and $N_a\le S_a$, or $X_1=Z(U)$, or $|U:X_1|=q$.
All these possibilities yield
$$N_Y(U)/X_1\le W^{\mathcal{B}}/X_1\simeq M^B/X_2=L\le (N_Y(U)/X_1).Z_f.Z_e.$$

In case (a) we have $SL_{\tilde{d}}(p^c)\le X_2$ and the group
$M^B/X_2$ is isomorphic to a section of the group $\Gamma L_{\tilde{d}}(p^c)/SL_{\tilde{d}}(p^c)\simeq Z_{(p^c-1)}.Z_c$.

In case (d) the group $M^B/X_2$ is isomorphic to a section of the group $\Gamma L_1(p^l)\simeq Z_{(p^l-1)}.Z_l$.

In any of these cases, $N_Y(U)/X_1\le Z_{(p^l-1)}.Z_l$ and, consequently, $UX_1/X_1$ can have at most two minimal normal subgroups of the same order.

If $Y={^2G}_2(q)$, then $N_Y(U)=U:D$, $D\simeq Z_{q-1}$, $\Phi(U)=U'$ is an elementary abelian subgroup of index $q$ in $U$, and $Z(U)$ is a subgroup of index $q$ in $U'$. Moreover, $D$ acts regularly on $Z(U)^{\#}$ and on $(U/U')^{\#}$, and has exactly two orbits of (odd) length $(q-1)/2$ on $(U'/Z(U))^{\#}$ (see \cite{Ward}).
But $UX_1/X_1\le Z_{(p^l-1)}.Z_l$, so $Z(U)\le X_1$. If $Z(U)= X_1$ or $U'\le X_1$, then $UX_1/X_1$ is elementary abelian of order $q^2$ or $q$, respectively, but this is also impossible due to the restriction $UX_1/X_1\le Z_{(p^l-1)}.Z_l$.
If $X_1$ contains an element of $U'-Z(U)$, then $U'\le X_1$.
This means that $Z(U)=X_1\cap U'$ and $X_1$ contains an element from $U-U'$. Therefore $U'X_1/X_1$ contains an elementary abelian group of order $q$, again a contradiction.

If $Y=U_3(q)$, then $\Phi(U)=U'=Z(U)$ is an elementary abelian subgroup of order $q$, $U/Z(U)$ is an elementary abelian group of order $q^2$, $N_Y(U)=U:D$, $D\simeq Z_{(q^2-1)/(q+1,3)}$ and $D$ acts fixed-point-freely on $(U/Z(U))^{\#}$.
But $UX_1/X_1\le Z_{(p^l-1)}.Z_l$, so $Z(U)\le X_1$. Moreover, it is easy to see that $Z(U)=U\cap X_1$.
Now if $X_1>U$, then $X_1/Z(U)$ centralizes $U/Z(U)$ and hence $X_1/Z(U)\le U/Z(U)$, a contradiction.
Therefore $Z(U)= X_1$, $UX_1/X_1$ is elementary abelian of order $q^2$ and has a cyclic complement in $N_Y(U)/X_1$ of order $(q^2-1)/(q+1,3)$, which contradicts the fact that $UX_1/X_1\le Z_{(p^l-1)}.Z_l$.

\end{itemize}

This completes the proof.
\end{proof}

\begin{theorem}\label{ImprimitiveCase:Thm3} If the groups $H$ and $G^{\mathcal{B}}$ are both affine, then
$t$ is even, $t-1=p_1^{s_1}=p_2^{s_2}-2$ for some   primes $p_1$ and $p_2$, and $\{|B|,|\mathcal{B}|\}=\{(t-1)^2,(t+1)^2\}$.
\end{theorem}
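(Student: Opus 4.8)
The plan is to exploit the strong constraint that, under these hypotheses, \emph{both} factors $|B| = k_1+1$ and $|\mathcal{B}| = m$ of $n = (t^2-1)^2 = |B|\cdot m$ are prime powers. Indeed, $H$ affine of degree $|B|$ gives $|B| = p^l$ with $p$ prime, and $G^{\mathcal{B}}$ affine of degree $m$ gives $m = p_0^{l_0}$ with $p_0$ prime. First I would check $p\neq p_0$: otherwise $n$ is a prime power, so $t^2-1 = (t-1)(t+1)$ is a prime power, which for $t\ge 6$ is impossible (if that prime is odd then $(t-1,t+1)\le 2$ forces one factor to be $1$; if it is $2$ then both factors are powers of $2$ differing by $2$, forcing $t=3$). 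Hence $\pi(n)=\pi(t-1)\cup\pi(t+1)=\{p,p_0\}$ has exactly two elements.

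The case of even $t$ is then immediate: $(t-1,t+1)=1$ and both $t\pm1$ exceed $1$, so $\pi(t-1)$ and $\pi(t+1)$ are disjoint nonempty sets whose union has size $2$; hence each is a singleton, i.e.\ $t-1 = p_1^{s_1}$ and $t+1 = p_2^{s_2}$ for distinct primes $p_1,p_2$. Then $p_1^{s_1} = t-1 = p_2^{s_2}-2$, and comparing prime factorizations of $n$ gives $\{|B|,|\mathcal{B}|\}=\{p_1^{2s_1},p_2^{2s_2}\}=\{(t-1)^2,(t+1)^2\}$, which is exactly the assertion.

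So the real work is to rule out odd $t$. Then $(t-1,t+1)=2$; since $(6,r)=1$ and $r\ge 2$ we have $r\ge 5$, so $t-1$, being divisible by a prime $\ge 5$, is not a power of $2$, and with only two available primes this forces $\pi(t+1)=\{2\}$, i.e.\ $t+1 = 2^c$ with $c\ge 3$. Then $t-1 = 2(2^{c-1}-1)$, so $n = 2^{2c+2}(2^{c-1}-1)^2$, and the odd prime power among $|B|,|\mathcal{B}|$ must be $(2^{c-1}-1)^2 = p_0^{l_0}$. Writing $2^{c-1}-1 = p_0^{\,j}$ and applying Lemma~\ref{Prel:LemmaZsigmCor} to $2^{c-1}=p_0^{\,j}+1$ --- only its case $(3)$ is available since $p_0$ is odd --- forces $j=1$ and $p_0 = 2^{c-1}-1$ a Mersenne prime, so $t-1 = 2p_0$; then $r\mid 2p_0$ with $(6,r)=1$ gives $r = p_0\neq 3$, hence $c\ge 4$, and $\{|B|,|\mathcal{B}|\}=\{2^{2c+2},p_0^2\}$.

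To finish I would invoke the rank-$3$ identity \eqref{eq1}, $k_1(\lambda-\lambda_1)=k_2(\lambda-\lambda_2)$, with $k_1 = |B|-1$ and $k_2 = |B|(m-1)$: since $(k_1,|B|)=1$ it forces $|B|\mid \lambda-\lambda_1$, and because $0\le\lambda_1\le k_1-1<|B|$ this determines $\lambda_1=\lambda\bmod|B|$, hence $J:=(\lambda-\lambda_1)/|B|$; integrality of $\lambda_2 = \lambda - k_1J/(m-1)$ then requires $(m-1)\mid k_1J$. Substituting $r=(t-1)/2$ yields the closed form $\lambda = 3t^3-7t+2$, and with $t=2^c-1$, $\lambda = 3\cdot 2^{3c}-9\cdot 2^{2c}+2^{c+1}+6$; in each of the two cases for $\{|B|,|\mathcal{B}|\}$ one reduces $\lambda$ modulo $|B|$, reads off $J$, and checks that $(m-1)\mid k_1J$ fails --- essentially because $m-1$ is then divisible by $2^c$ while $k_1J$ is odd (case $|B|=2^{2c+2}$), or $m-1=2^{2c+2}-1$ is an odd number exceeding the odd part of $k_1J$ (case $|B|=p_0^2$). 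Either contradiction shows $t$ is even, completing the proof. I expect this last arithmetic --- which hinges on $r=(t-1)/2$ being forced, on an exact reduction of $\lambda$ modulo $|B|$, and on the two divisibility checks --- to be the main obstacle; everything before it is bookkeeping with prime factorizations plus one application of the Zsigmondy corollary.
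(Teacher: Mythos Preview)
Your proposal is correct and follows essentially the same route as the paper: both observe that $n=(t^2-1)^2$ must factor as a product of two coprime prime powers, dispose of even $t$ immediately, and for odd $t$ force $t+1=2^c$ and $t-1=2p_0$ with $p_0=2^{c-1}-1$ a Mersenne prime (via Lemma~\ref{Prel:LemmaZsigmCor}), hence $r=p_0=(t-1)/2$, and then invoke the rank-$3$ identity \eqref{eq1} with $\lambda=3t^3-7t+2$ for the contradiction.

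The only difference lies in how the final contradiction is extracted. The paper shows that $(k_1+1)(m-1)/x$ divides $\lambda-\lambda_1$, where $x=\gcd(m-1,k_1)=2^{(6,\,c-2)}-1$, and from $\lambda_1\ge 0$ obtains a size inequality that fails in both assignments of $\{|B|,m\}$. You instead pin down $\lambda_1=\lambda\bmod|B|$ exactly (using $0\le\lambda_1\le k_1-1$), compute $J=(\lambda-\lambda_1)/|B|$ in closed form, and verify directly that $(m-1)\nmid k_1J$ by a parity argument in one case and a size comparison of odd parts in the other. Your variant sidesteps the auxiliary gcd computation and is arguably more transparent; both arguments are short once $r=(t-1)/2$ has been forced.
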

\begin{proof}

Suppose that $H$ and $G^{\mathcal{B}}$ are both affine. Then $n=(t^2-1)^2=p_1^{s_1}p_2^{s_2}$ for some distinct primes $p_1$ and $p_2$.

Suppose that $t$ is odd. Then $(t-1,t+1)=2=p_2$ and so $(t+1)^2=2^{2l}$ (by the assumption, $t-1$ cannot be a power of $2$). Hence $t-1=2p_1^{e_1}=2^l-2$ and $p_1^{e_1}=2^{l-1}-1$ by Lemma \ref{Prel:LemmaZsigmCor} is a Mersenne prime, $s_1=2e_1=2$, $s_2=2l+2$ and the number $l-1$ is prime.
Hence $r=(t-1)/2$.

 Then by Lemma \ref{Prel:LemmaNT} $x:=(m-1,k_1)=(2^{2(l+1)}-1,(2^{(l-1)}-1)^2-1)=(2^{2(l+1)}-1,2^{(l-2)}-1))=2^{(2(l+1),l-2)}-1=2^{(6,l-2)}-1<r-1$.
  As $\lambda=3t^3-7t+2> 4(t+1)^2$, by Lemma \ref{ImprimitiveCase:Lem1} we get that $(m-1)(k_1+1)/x$ divides $\lambda-\lambda_1$.
Hence, for some integer $y\ge 1$, we have
$$\lambda-\frac{y}{x}((t^2-1)^2-(k_1+1))=\lambda_1\ge 0.$$
This implies
$$2(3t^3-7t+2)\ge y\frac{r}{x}(t-1)(4t^2+8t+3)$$
if $k_1+1=r^2$, or
$$3t^3-7t+2\ge 4(t+1)^2y(r+1)\frac{r-1}{x}$$
if $m=r^2$.
But both possibilities give $y=1$ and $r/x<3/2$. Contradiction.

\end{proof}

Finally, Theorem \ref{MainThm:ImprimitiveCase} follows  from Theorems \ref{ImprimitiveCase:Thm1}, \ref{ImprimitiveCase:Thm2} and \ref{ImprimitiveCase:Thm3}.

\section{Acknowledgements}

The work was carried out within the framework of the State Assignment of the Institute of Mathematics and Mechanics of the Ural Branch of the Russian Academy of Sciences, project FUMF-2022-0003 (for example, Section \ref{ImprimitiveCase})
and with the financial support of the Ministry of Science and Higher Education of the Russian Federation, project ``Ural Mathematical Center'', agreement No. 075-02-2024-1428 (for example, Section \ref{PrimitiveCase}).

 \end{document}